\newtheorem{thm}{Theorem}[section]
\newtheorem{theorem}[thm]{Theorem}
\newtheorem{proposition}[thm]{Proposition}
\newtheorem{lemma}[thm]{Lemma}
\newtheorem{definition}{Definition}
\newtheorem{corollary}[thm]{Corollary}
\newtheorem{conj}{Conjecture}
\newcommand{\te}{\tilde{e}}
\newcommand{\tv}{\tilde{v}}
\newcommand{\bp}{{\bm p}}
\newcommand{\bmm}{{\bm m}}
\newcommand{\bome}{{\bm \omega}}
\title{Infinitesimal Rigidity of Symmetric Bar-Joint Frameworks}
\author{Bernd Schulze \thanks{
Department of Mathematics and Statistics,
University of Lancaster,
Lancaster
LA1 4YF, United Kingdom
(\texttt{b.schulze@lancaster.ac.uk}).
}
\and
Shin-ichi Tanigawa\thanks{
  Research Institute for Mathematical Sciences, Kyoto University, Kyoto 606-8502 Japan
(\texttt{tanigawa@kurims.kyoto-u.ac.jp}).
  Supported by JSPS Grant-in-Aid for Young Scientist (B), 24740058.}
}
\begin{document}

\renewcommand{\thefootnote}{ }
\footnotetext{2010 {\em Mathematics Subject Classification}. Primary 52C25, 05B35, 70B10; Secondary 05C10, 68R10.}
\footnotetext{{\em Key words}. infinitesimal rigidity, frameworks, symmetry, rigidity of graphs, rigidity matroids, group-labeled graphs, frame matroids}

\maketitle

\begin{abstract}

We propose new symmetry-adapted rigidity matrices to analyze the infinitesimal
rigidity of arbitrary-dimensional bar-joint frameworks with Abelian point group symmetries. These matrices define
new symmetry-adapted rigidity matroids on group-labeled quotient graphs. Using these new tools,
we establish combinatorial characterizations of infinitesimally rigid
two-dimensional bar-joint frameworks whose joints are positioned as generic as possible
subject to the symmetry constraints imposed by a reflection, a half-turn or a
three-fold rotation in the plane. For bar-joint frameworks which are generic with respect
to any other cyclic point group in the plane, we provide a number of necessary conditions for infinitesimal rigidity.

\end{abstract}


\section{Introduction}
\label{sec:intro}

A $d$-dimensional bar-joint framework is a straight-line realization of a finite simple graph $G$ in Euclidean $d$-space. Intuitively, we think of a bar-joint framework as a collection of fixed-length bars (corresponding to the edges of $G$) which are connected at their ends by joints (corresponding to the vertices of $G$) that allow bending in any direction of $\mathbb{R}^d$. Such a  framework is said to be rigid if there exists no non-trivial continuous bar-length preserving motion of the framework vertices, and is said to be flexible otherwise (see \cite{W1} for basic definitions and background).

The theory of generic rigidity seeks to characterize the graphs which form rigid frameworks for all generic (i.e., almost all) realizations of the vertices in Euclidean $d$-space. For $d=2$, this problem was first solved by
Laman~\cite{Lamanbib} in 1970:
 Laman proved that a generic two-dimensional bar-joint framework is minimally rigid if and only if
the underlying graph $G$ satisfies $|E(G)|=2|V(G)|-3$ and $|E(G')|\leq 2|V(G')|-3$ for any subgraph
$G'$ of $G$ with $|V(G')|\geq 2$, where $V(H)$ and $E(H)$ denote the set of vertices and
the set of edges of a graph $H$, respectively.
 For dimensions $d\geq 3$, however, the analogous questions remain long-standing open problems, although there exist some significant partial results \cite{W1}. 

The theory of rigid and flexible frameworks has a wide variety of practical applications in many areas of science, engineering and design, where frameworks serve as a suitable mathematical model for various kinds of physical structures, mechanical gadgets (such as linkages or robots), sensor networks, biomolecules, etc. Since many of these structures exhibit non-trivial symmetries, it is natural to explore the impact of symmetry on the rigidity and flexibility properties of frameworks. Over the last decade, this research area has gained an ever increasing attention in both the mathematical community and in the applied sciences.
Two separate fundamental research directions can be identified:
\begin{enumerate}
\item Forced symmetry: The framework starts in a symmetric position and must maintain this symmetry throughout its motion.

\item Incidental symmetry: The framework starts in a symmetric position, but may move in unrestricted ways.
\end{enumerate}

Over the last few years, significant progress has been made in the rigidity analysis of forced-symmetric frameworks \cite{MT1,MT2,LT,jkt,BSWW,tan}.  A key motivation for this research is that for symmetry-generic frameworks (that is, for frameworks which are as generic as possible subject to the given symmetry constraints), the existence of a non-trivial symmetric infinitesimal motion also guarantees the existence of a non-trivial finite (i.e., continuous) symmetry-preserving motion  of the framework \cite{BS2}.  To simplify  the symmetry-forced rigidity analysis of a symmetric framework  a symmetric analog of the rigidity matrix, called the orbit rigidity matrix, was recently established in \cite{BSWW}. In particular, this matrix was used in  \cite{jkt} to formulate combinatorial characterizations of symmetry-forced rigid  symmetry-generic frameworks in terms of Henneberg-type construction moves on gain graphs (group-labeled graphs), for all rotational groups ${\cal C}_n$  and for all dihedral groups ${\cal C}_{nv}$ with odd $n$ in the plane.

In contrast, for the more general question of how to analyze the rigidity properties of an incidentally symmetric framework, there has not been any major progress in the last few years.
This paper proposes a systematic way to analyze this general case.
The state of the art in this research area is as follows.

The most fundamental result concerning the rigidity of symmetric frameworks is that the rigidity matrix of a framework with non-trivial point group $\Gamma$ can be transformed into a block-decomposed form so that each block corresponds to an irreducible representation of $\Gamma$. This goes back to an observation of Kangwai and Guest \cite{KG2}, and was proved rigorously in \cite{BS2,owen}.
Note that the submatrix block which corresponds to the trivial irreducible representation of $\Gamma$ describes the forced-symmetric rigidity properties of the framework~\cite{BSWW}.
Using this block-decomposition of the rigidity matrix, necessary conditions for a symmetric bar-joint framework  to be  isostatic (i.e., minimally infinitesimally rigid) in $\mathbb{R}^d$ have been derived in \cite{FGsymmax,cfgsw}.


In \cite{cfgsw} the necessary conditions were conjectured to be sufficient for
$2$-dimensional symmetry-generic frameworks to be isostatic.
This was confirmed  for the groups $\mathcal{C}_2$, $\mathcal{C}_3$ and $\mathcal{C}_s$ in \cite{BS3,BS4}, but it remains open for the dihedral groups.

However, note that in order to obtain combinatorial characterizations of symmetry-generic infinitesimally rigid frameworks in the plane these symmetrized Laman-type results are only of limited use since, by the conditions derived in \cite{cfgsw}, a symmetric infinitesimally rigid framework usually does not contain an isostatic subframework on the same vertex set with the same symmetry. For example, it turns out that there does not exist an isostatic framework in the plane with point group $\mathcal{C}_2$ or $\mathcal{C}_s$, where the group acts freely on the edges of the framework (see Figure \ref{fig:noisosubfw}) \cite{cfgsw}. Moreover, there does not exist \emph{any} isostatic framework in the plane with $k$-fold rotational symmetry, for $k>3$  \cite{cfgsw}.

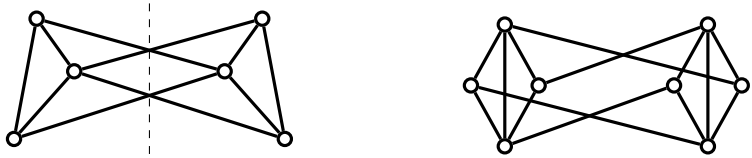
\begin{figure}[htp]
\begin{center}
  \begin{tikzpicture}[very thick,scale=1]
\tikzstyle{every node}=[circle, draw=black, fill=white, inner sep=0pt, minimum width=5pt];
     \path (-1,0) node (p1)  {} ;
    \path (1,0) node (p2)  {} ;
    \path (-1.5,0.7) node (p3)  {} ;
     \path (1.5,0.7) node (p4)  {} ;
     \path (-1.8,-0.9) node (p5)  {} ;
     \path (1.8,-0.9) node (p6)  {} ;
\draw (p1)  --  (p3);
        \draw (p1)  --  (p5);
        \draw (p1)  --  (p4);
        \draw (p6)  --  (p1);

        \draw (p2)  --  (p4);
        \draw (p2)  --  (p6);
        \draw (p2)  --  (p3);
        \draw (p5)  --  (p2);

        \draw (p5)  --  (p3);
        \draw (p4)  --  (p6);
        \draw[dashed,thin] (0,-1.1)  --  (0,1);
              \end{tikzpicture}
        \hspace{2cm}
         \begin{tikzpicture}[very thick,scale=0.9]
\tikzstyle{every node}=[circle, draw=black, fill=white, inner sep=0pt, minimum width=5pt];
     \path (-1,0) node (p1)  {} ;
    \path (1,0) node (p2)  {} ;
    \path (-2,0) node (p3)  {} ;
    \path (2,0) node (p4)  {} ;
       \path (-1.5,0.9) node (p5)  {} ;
    \path (1.5,-0.9) node (p6)  {} ;
    \path (-1.5,-0.9) node (p7)  {} ;
    \path (1.5,0.9) node (p8)  {} ;

        \draw (p1)  --  (p5);
        \draw (p5)  --  (p3);
        \draw (p3)  --  (p7);
        \draw (p1)  --  (p7);
        \draw (p5)  --  (p7);

        \draw (p2)  --  (p6);
        \draw (p6)  --  (p4);
        \draw (p4)  --  (p8);
        \draw (p2)  --  (p8);
        \draw (p6)  --  (p8);

        \draw (p1)  --  (p8);
        \draw (p2)  --  (p7);
        \draw (p5)  --  (p4);
        \draw (p6)  --  (p3);

        \end{tikzpicture}

     \caption{Infinitesimally rigid symmetric frameworks in $\mathbb{R}^2$ with respective point groups $\mathcal{C}_s$ and $\mathcal{C}_2$  which do not contain a spanning isostatic subframework with the same symmetry.}
\end{center}
\label{fig:noisosubfw}
\end{figure}

In this paper, we establish several new results concerning the infinitesimal rigidity of (`incidentally') symmetric frameworks.
First, for any Abelian point group  $\Gamma$  which acts freely on the vertices of a $d$-dimensional framework, we extend the concept of the orbit rigidity matrix described in \cite{BSWW} and show how to construct an `anti-symmetric' orbit rigidity matrix
for each of the irreducible representations $\rho_j$ of $\Gamma$ (see Section \ref{sec:block}).
 These `anti-symmetric' orbit rigidity matrices are equivalent to their corresponding submatrix blocks in the block-decomposed rigidity matrix, but their entries can explicitly be derived in a transparent fashion.

For the reflection group $\mathcal{C}_s$ and for the rotational groups  $\mathcal{C}_2$ and   $\mathcal{C}_3$, we then use these orbit rigidity matrices in combination with Henneberg-type inductive construction moves on their corresponding gain graphs to establish combinatorial characterizations of symmetry-generic frameworks in $\mathbb{R}^2$ which do not have a non-trivial
$\rho_j$-symmetric infinitesimal motion. Taken together, these results lead to the desired  combinatorial characterizations of infinitesimally rigid symmetry-generic frameworks for these groups (see Sections~\ref{sec:gain_sparsity} and \ref{sec:characterization}).

 For the other cyclic groups $\mathcal{C}_k$, $k>3$, we provide a number of necessary conditions for infinitesimal rigidity, and we also offer some conjectures.

Finally, in Section~\ref{sec:ext}, we briefly discuss some further applications of our tools and methods and outline some directions for future developments.


\section{Rigidity of bar-joint frameworks}

For a finite  graph $G$, we denote the vertex set of $G$ by $V(G)$ and the edge set of $G$ by $E(G)$.
A \emph{bar-joint framework}  (or simply a {\em framework}) in $\mathbb{R}^{d}$ is a pair $(G,\bp)$,
where $G$ is a simple graph and  $\bp:V(G)\to \mathbb{R}^d$ is a map such that $\bp(u) \neq \bp(v)$
for all $\{u,v\} \in E(G)$.
For $v\in V(G)$, we say that $\bp(v)$ is the \emph{joint} of $(G,\bp)$ corresponding to $v$,
and for $e=\{u,v\}\in E(G)$, we say that the line segment between $\bp(u)$ and $\bp(v)$ is
the \emph{bar} of $(G,\bp)$ corresponding to $e$.
For simplicity, we shall denote $\bp(v)$ by $p_v$ for $v\in V(G)$.

An \emph{infinitesimal motion} of a framework $(G,\bp)$ in $\mathbb{R}^d$
is a function $\bmm: V(G)\to \mathbb{R}^{d}$ such that
\begin{equation}
\label{infinmotioneq}
\langle p_u-p_v, m_u-m_v\rangle =0 \quad\textrm{ for all } \{u,v\} \in E(G)\textrm{,}
\end{equation}
where $m_v=\bmm(v)$ for each $v$.

An infinitesimal motion $\bmm$ of $(G,\bp)$ is a \emph{trivial infinitesimal motion}
if there exists a skew-symmetric matrix $S$
and a vector $t$ such that $\bmm(v)=S\bp(v)+t$ for all $v\in V(G)$.
Otherwise $\bmm$ is called an \emph{infinitesimal flex} (or \emph{non-trivial infinitesimal motion}) of $(G,\bp)$.
 $(G,\bp)$ is \emph{infinitesimally rigid} if every infinitesimal motion of $(G,\bp)$ is trivial.
Otherwise $(G,\bp)$ is said to be \emph{infinitesimally flexible} \cite{W1}.

These definitions are motivated by the fact that if $(G,\bp)$ is infinitesimally rigid, then $(G,\bp)$ is rigid
in the sense that every continuous deformation of $(G,\bp)$ which preserves the edge lengths
$\|p_i-p_j\|$ for all $\{i,j\}\in E(G)$, must preserve the distances $\|p_s-p_t\|$
for all pairs of vertices $s$ and $t$ of $G$.

A key tool to study the infinitesimal rigidity properties of a $d$-dimensional framework $(G,\bp)$
is the rigidity matrix of $(G,\bp)$.
For a vector $x\in \mathbb{R}^d$, we denote the $k^{th}$ component of $x$ by $(x)_{k}$.
The rigidity matrix $R(G,\bp)$ is a $|E(G)|\times d|V(G)|$ matrix associated with the system of linear equations
(\ref{infinmotioneq}) with respect to $\bmm$, in which each row is associated with an edge
and consecutive $d$ columns are associated with a vertex as follows,
\begin{displaymath} \bordermatrix{& & & & u & & & & v & & & \cr & & & &  & & \vdots & &  & & &
\cr e=\{u,v\} & 0 & \ldots &  0 & (p_{u}-p_{v}) & 0 & \ldots & 0 & (p_{v}-p_{u}) &  0 &  \ldots&  0 \cr & & & &  & & \vdots & &  & & &
}
\textrm{,}\end{displaymath}
where, for each edge $\{u,v\}\in E(G)$, $R(G,\bp)$ has the row with
$(p_{u}-p_{v})_{1},\ldots,(p_{u}-p_{v})_{d}$ in the columns associated with $u$,
$(p_{v}-p_{u})_{1},\ldots,(p_{v}-p_{u})_{d}$ in the columns associated with $v$,
and $0$ elsewhere \cite{W1}.

Throughout the paper, for a finite set $S$ and a finite dimensional vector space $W$ over some field,
the set of all functions $f:S\rightarrow W$ is denoted  by $W^S$ or by $\bigoplus_{s\in S} W$ (taking copies of $W$).
Then $R(G,\bp)$ is regarded as a linear map from $(\mathbb{R}^d)^{V(G)}$ to $\mathbb{R}^{E(G)}$.
Note that $\bmm\in (\mathbb{R}^d)^{V(G)}$ is an infinitesimal motion if and only if
$R(G,\bp) \bmm=0$,
which means that  the kernel of the rigidity matrix $R(G,\bp)$ is
the space of all infinitesimal motions of $(G,\bp)$.
It is well known that a framework $(G,\bp)$ in $\mathbb{R}^d$ with $n=|V(G)|$ is infinitesimally rigid if and
only if either the rank of its associated rigidity matrix $R(G,\bp)$ is precisely
$dn-\binom{d+1}{2}$, or $G$ is a complete graph $K_n$ and the points $p_i$, $i=1,\ldots, n$, are affinely independent \cite{asiroth}.

A \emph{self-stress} of a framework $(G,\bp)$ is a function  $\bome:E(G)\to \mathbb{R}$ such that at each joint
$p_u$ of $(G,\bp)$ we have
\begin{displaymath}
\sum_{v :\{u,v\}\in E(G)}\omega_{uv}(p_{u}-p_{v})=0 \textrm{,}
\end{displaymath}
where $\omega_{uv}$ denotes $\bome(\{u,v\})$ for all $\{u,v\}\in E(G)$.
Note that $\bome\in \mathbb{R}^{E(G)}$ is a self-stress if and only if
$R(G,\bp)^{\top}\bome=0$.
In structural engineering, the self-stresses are also called \emph{equilibrium stresses} as they record  tensions and compressions in the bars balancing at each vertex.

 If $(G,\bp)$ has a non-zero self-stress, then $(G,\bp)$ is said to be \emph{dependent}
(since in this case there exists a linear dependency among the row vectors of $R(G,\bp)$).
Otherwise, $(G,\bp)$ is said to be \emph{independent}.
A framework which is both independent and infinitesimally rigid is called \emph{isostatic} \cite{W1}.

A $d$-dimensional framework $(G,\bp)$ with $n$ vertices is called \emph{generic} if the
coordinates of $\bp$ are algebraically independent over $\mathbb{Q}$, i.e., if there does
not exist a polynomial $h(x_1,\ldots, x_{dn})$ with rational coefficients such that
$h((p_1)_1\ldots,(p_n)_d)=0$. Note that the set of all generic realizations of $G$ is a dense, but not an open subset of $\mathbb{R}^{dn}$.

We say that $(G,\bp)$ is \emph{regular}
if the rigidity matrix $R(G,\bp)$ has maximal rank among all realizations of $G$. It is easy to see that the set of all regular realizations of $G$  is a dense and open subset of $\mathbb{R}^{dn}$ which contains the set of all generic realizations of $G$ \cite{asiroth,W1}.

  It is well known that for regular frameworks (and hence also for generic frameworks), infinitesimal rigidity is purely combinatorial, and hence a property of the underlying graph. Thus, we say that a graph $G$ is \emph{$d$-rigid ($d$-independent, $d$-isostatic)} if $d$-dimensional regular realizations of $G$ are infinitesimally rigid (independent, isostatic).




\section{Rigidity of symmetric bar-joint frameworks}
In this subsection, we review some recent approaches for analyzing the rigidity of symmetric frameworks.
First, we  introduce gain graphs, which turn out to be useful tools for describing the underlying combinatorics
of symmetric frameworks.
We then provide precise definitions of symmetric graphs and symmetric frameworks, and then explain
the block-diagonalization of rigidity matrices.

\subsection{Gain graphs}
Let $H$ be a directed graph which may contain multiple edges and loops, and let $\Gamma$ be a group.
A {\em $\Gamma$-gain graph} (or $\Gamma$-labeled graph)  is a pair $(H,\psi)$ in which each edge is associated with an
element of $\Gamma$ via a {\em gain function} $\psi:E(H)\rightarrow \Gamma$.
See Figure \ref{gaingraph}(b) for an example.
A gain graph is a directed graph, but
its orientation is used only for the reference of the gains.
That is, we can change the orientation of each edge as we like
by imposing the  property on $\psi$ that if an edge has gain $g$ in one direction,
then it has gain $g^{-1}$ in the other direction.


\subsection{Symmetric graphs}
\label{subsec:symmetric_graphs}

Let $G$ be a finite simple graph.
An {\em automorphism} of $G$ is a permutation $\pi:V(G)\rightarrow V(G)$ such that
$\{u,v\}\in E(G)$ if and only if $\{\pi(u),\pi(j)\}\in E(G)$.
The set of all automorphisms of $G$ forms a subgroup of the symmetric group on $V(G)$,
known as the {\em automorphism group} ${\rm Aut}(G)$ of $G$.
An {\em action} of a group $\Gamma$ on $G$ is a group homomorphism $\theta:\Gamma \rightarrow {\rm Aut}(G)$.
An action $\theta$ is called {\em free} on $V(G)$ (resp., $E(G)$)
if  $\theta(\gamma)(v)\neq v$ for any $v\in V(G)$ (resp., $\theta(\gamma)(e)\neq e$ for any $e\in E(G)$) and
any non-identity $\gamma\in \Gamma$.
We say that a graph $G$ is {\em $\Gamma$-symmetric} (with respect to $\theta$)
if $\Gamma$ acts on $G$ by $\theta$.
Throughout the paper, we only consider the case when $\theta$ is free on $V(G)$, and we
omit to specify the action $\theta$, if it is clear from the context.
We then denote $\theta(\gamma)(v)$ by  $\gamma v$.


For a $\Gamma$-symmetric graph $G$,
the {\em quotient graph} $G/\Gamma$ is a multigraph whose vertex set is the set $V(G)/\Gamma$
of vertex orbits and whose edge set is the set $E(G)/\Gamma$ of edge orbits.
An edge orbit may be represented by a loop in $G/\Gamma$.

Several distinct graphs may have the same quotient graph.
However, if we assume that the underlying action is free on $V(G)$,
then a gain labeling  makes the relation one-to-one.
To see this, we arbitrarily choose a vertex $v$ as a representative vertex from each vertex orbit.
Then each orbit is of the form $\Gamma v=\{gv\mid g\in \Gamma\}$.
If the action is free,
an edge orbit connecting $\Gamma u$ and $\Gamma v$ in $G/\Gamma$
can be written as $\{\{gu,ghv\}\mid g\in \Gamma \}$
for a unique $h\in \Gamma$.
We then orient the edge orbit from $\Gamma u$ to $\Gamma v$ in $G/\Gamma$
and assign to it the gain $h$.
In this way, we obtain {\em the quotient $\Gamma$-gain graph}, denoted by $(G/\Gamma,\psi)$.
$(G/\Gamma,\psi)$ is unique up to choices of representative vertices.
Figure \ref{gaingraph} illustrates an example, where $\Gamma$ is the reflection group $\mathcal{C}_s$.

\begin{figure}[htp]
\begin{center}
  \begin{tikzpicture}[very thick,scale=1]
\tikzstyle{every node}=[circle, draw=black, fill=white, inner sep=0pt, minimum width=5pt];
     \path (-1,0) node (p1)  [label = left: $v_1$]{} ;
    \path (1,0) node (p2) [label = right: $v_4$]{}  ;
    \path (-1.8,0.8) node (p3) [label = left: $v_2$]{} ;
     \path (1.8,0.8) node (p4)  [label = right: $v_5$]{} ;
     \path (-1.8,-0.8) node (p5)[label = left: $v_3$]{} ;
     \path (1.8,-0.8) node (p6) [label = right: $v_6$]{};
\draw (p1)  --  (p3);
        \draw (p1)  --  (p5);
              \draw (p6)  --  (p1);

        \draw (p2)  --  (p4);
        \draw (p2)  --  (p6);
        \draw (p4)  --  (p3);
        \draw (p5)  --  (p2);

        \draw (p5)  --  (p3);
        \draw (p4)  --  (p6);
        \draw[dashed,thin] (0,-1)  --  (0,1.2);

           \node [rectangle, draw=white, fill=white] (b) at (0,-1.5) {(a)};
        \end{tikzpicture}
        \hspace{2cm}
         \begin{tikzpicture}[very thick,scale=0.9]
\tikzstyle{every node}=[circle, draw=black, fill=white, inner sep=0pt, minimum width=5pt];
     \path (-1.2,0.15) node (p1) [label = right: $v_1$]{} ;
       \path (-2,0.95) node (p3) [label = left: $v_2$]{} ;
      \path (-2,-0.65) node (p5) [label = left: $v_3$]{} ;
      \draw (p1)  --  (p3);
        \draw (p1)  --  (p5);
        \draw (p3)  --  (p5);
        \path
(p3) edge [loop above,->, >=stealth,shorten >=2pt,looseness=26] (p3);
\path
(p5) edge [->,bend right=22] (p1);
\node [rectangle, draw=white, fill=white] (b) at (-1.25,-0.45) {$s$};
\node [rectangle, draw=white, fill=white] (b) at (-1.65,1.25) {$s$};
\node [rectangle, draw=white, fill=white] (b) at (-1.6,-1.5) {(b)};
        \end{tikzpicture}
         \caption{A $\mathcal{C}_s$-symmetric graph (a) and its quotient gain graph (b), where $\mathcal{C}_s=\{id,s\}$. For simplicity, we omit the direction and the label of every edge with gain $id$.}
\end{center}
    \label{gaingraph}
\end{figure}
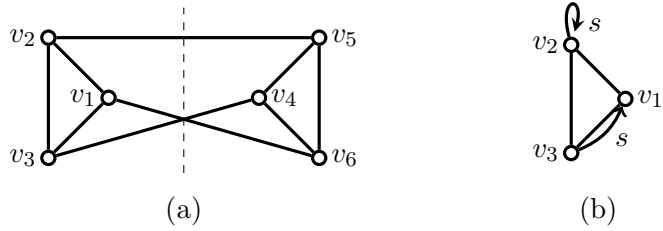

Conversely, let $(H,\psi)$ be a finite $\Gamma$-gain graph.
We simply denote a pair $(g,v)$, where $g\in \Gamma$ and $v\in V(H)$,  by $gv$.
The {\em covering graph} (also known as the derived graph) of $(H,\psi)$
is the simple graph with the vertex set $\Gamma\times V(H)=\{gv\mid g\in \Gamma, v\in V(H)\}$
and the edge set $\{\{gu,g\psi(e)v\}\mid e=(u,v)\in E(H), g\in \Gamma\}$.

Clearly, $\Gamma$  acts freely on the covering graph with the action
$\theta$ defined by $\theta(g):v \mapsto gv$ for $g\in \Gamma$,
under which the quotient graph comes back to $(H,\psi)$.
In this way, there is a one-to-one correspondence between $\Gamma$-gain graphs
and $\Gamma$-symmetric graphs with free actions (up to the choices of representative vertices).

The map $c:G\rightarrow H$ defined by $c(gv)=v$ and $c(\{gu,g\psi(e)v\})=(u,v)$ is called a {\em  covering map}.
In order to avoid confusion, throughout the paper, a vertex or an edge in a quotient gain graph $H$
is denoted with the mark tilde, e.g., $\tilde{v}$ or $\tilde{e}$.
Then the fiber $c^{-1}(\tilde{v})$ of a vertex $\tilde{v}\in V(H)$ and the fiber $c^{-1}(\tilde{e})$
of an edge $\tilde{e}\in E(H)$
coincide with a vertex orbit and an edge orbit, respectively, in $G$.


\subsection{Symmetric bar-joint frameworks}
\label{subsec:symmetric_framework}

Given a finite simple graph $G$ and a map $\bp:V(G)\to \mathbb{R}^d$,
a \emph{symmetry operation} of the framework $(G,\bp)$ in $\mathbb{R}^{d}$
is an isometry $x$ of $\mathbb{R}^{d}$ such that for some $\alpha_x\in \textrm{Aut}(G)$, we have
\begin{equation} \label{eq:symop} x(p_i)=p_{\alpha_x(i)} \qquad \textrm{for all }
i\in V(G)\textrm{. }\nonumber\end{equation}
The set of all symmetry operations of a framework $(G,\bp)$ forms a group under composition, called the \emph{point group} of $(G,\bp)$.
Since translating a framework does not change its rigidity properties, we may assume wlog that the point group of a framework is always a \emph{symmetry group}, i.e., a subgroup of the orthogonal group $O(\mathbb{R}^{d})$.

Given a symmetry group $\Gamma$ and a graph $G$,
we let ${\cal R}_{(G,\Gamma)}$ denote the set of all $d$-dimensional realizations
of $G$ whose point group is either equal to $\Gamma$ or contains $\Gamma$ as a subgroup \cite{BS1,BS2,BS3,BS4}.
In other words, the set ${\cal R}_{(G,\Gamma)}$ consists of all realizations $(G,\bp)$ of $G$
for which there exists an action $\theta:\Gamma\to \textrm{Aut}(G)$ so that
\begin{equation}\label{class} x\big(\bp(v))=\bp({\theta(x)(v)}) \qquad \textrm{ for all } v\in V(G)\textrm{ and all } x\in \Gamma\textrm{.}\end{equation}
A framework $(G,\bp)\in {\cal R}_{(G,\Gamma)}$ satisfying the equations in (\ref{class})
for $\theta:\Gamma\to \textrm{Aut}(G)$ is said to be \emph{of type $\theta$},
and the set of all realizations in ${\cal R}_{(G,\Gamma)}$ which are of type $\theta$ is denoted
by ${\cal R}_{(G,\Gamma,\theta)}$ (see again \cite{BS1,BS2,BS3} and Figure \ref{K33types}).
It is shown in \cite{BS1} that
$(G,\bp)$ is of a unique type $\theta$ and $\theta$ is necessarily also a homomorphism,
when $\bp$ is injective.

\begin{figure}[htp]
\begin{center}
           \begin{tikzpicture}[very thick,scale=1]
\tikzstyle{every node}=[circle, draw=black, fill=white, inner sep=0pt, minimum width=5pt];
    \path (-0.7,0.8) node (p1) [label = above left: $p_1$] {} ;
    \path (0.7,0.8) node (p4) [label = above right: $p_4$] {} ;
    \path (-1.6,-0.5) node (p2) [label = below left: $p_2$] {} ;
     \path (1.6,-0.5) node (p3) [label = below right: $p_3$] {} ;
      \draw (p1) -- (p4);
    \draw (p1) -- (p2);
    \draw (p3) -- (p4);
    \draw (p2) -- (p3);
     \draw [dashed, thin] (0,-1) -- (0,1.3);
     \node [draw=white, fill=white] (z) at (-0.2,-1) {$s$};
      \node [draw=white, fill=white] (b) at (0,-1.6) {(a)};
        \end{tikzpicture}
        \hspace{2cm}
      \begin{tikzpicture}[very thick,scale=1]
\tikzstyle{every node}=[circle, draw=black, fill=white, inner sep=0pt, minimum width=5pt];
       \tikzstyle{every node}=[circle, draw=black, fill=white, inner sep=0pt, minimum width=5pt];
    \path (-0.7,0.8) node (p1) [label = above left: $p_1$] {} ;
    \path (0.7,0.8) node (p4) [label = above right: $p_3$] {} ;
    \path (-1.6,-0.5) node (p2) [label = below left: $p_2$] {} ;
     \path (1.6,-0.5) node (p3) [label = below right: $p_4$] {} ;
      \draw (p1) -- (p3);
    \draw (p1) -- (p2);
    \draw (p3) -- (p4);
    \draw (p2) -- (p4);
     \draw [dashed, thin] (0,-1) -- (0,1.3);
     \node [draw=white, fill=white] (z) at (-0.2,-1) {$s$};
      \node [draw=white, fill=white] (b) at (0,-1.6) {(b)};
        \end{tikzpicture}
\end{center}
\vspace{-0.3cm}
\caption{$2$-dimensional realizations of $K_{2,2}$ in ${\cal R}_{(K_{2,2},\mathcal{C}_s)}$ of different types: the framework in (a) is of type
$\theta_{a}$, where $\theta_{a}: \mathcal{C}_{s} \to \textrm{Aut}(K_{2,2})$ is the homomorphism defined by $\theta_{a}(s)=
(1 \, 4)(2 \, 3)$, and the framework in (b) is of type
$\theta_{b}$, where $\theta_{b}: \mathcal{C}_{s} \to \textrm{Aut}(K_{2,2})$ is the homomorphism defined by $\theta_{b}(s)=
(1 \, 3)(2\, 4)$.}
\label{K33types}
\end{figure}
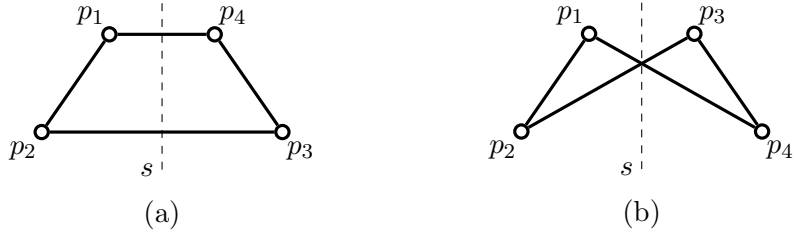

For simplicity, we will assume throughout this paper that a framework $(G,\bp)\in {\cal R}_{(G,\Gamma)}$ has no joint that is `fixed' by a non-trivial symmetry operation in $\Gamma$ (i.e., $(G,\bp)$ has no joint $p_i$ with $x(p_i)=p_i$ for some $x\in \Gamma$, $x\neq id$).

Let $\Gamma$ be an abstract group, and   $G$ be a  $\Gamma$-symmetric graph with respect to
a free action $\theta:\Gamma\rightarrow {\rm Aut}(G)$.
Suppose also that $\Gamma$ acts on $\mathbb{R}^d$
via a homomorphism $\tau:\Gamma\rightarrow O(\mathbb{R}^d)$.
Then we say that a framework $(G,\bp)$ is {\em $\Gamma$-symmetric} (with respect to $\theta$ and $\tau$) if
$(G,\bp)\in {\cal R}_{(G,\tau(\Gamma),\theta)}$, that is, if
\begin{equation}
\label{eq:symmetric_func}
\tau(\gamma) (\bp(v))=\bp(\theta(\gamma) v) \qquad \text{for all } \gamma\in \Gamma \text{ and all } v\in V(G).
\end{equation}

Let $H$ be the quotient graph of $G$ with the covering map $c:G\rightarrow H$.
It is convenient to fix a representative vertex $v$ of each vertex orbit $\Gamma v=\{gv\colon g\in \Gamma\}$,
and define the {\em quotient} of $\bp$ to be $\tilde{\bp}:V(H)\rightarrow \mathbb{R}^d$,
so that there is a one-to-one correspondence between $\bp$ and $\tilde{\bp}$ given by
$\bp(v)=\tilde{\bp}(c(v))$ for each representative vertex $v$.


For a discrete point group $\Gamma$, let $\mathbb{Q}_{\Gamma}$ be the field
 generated by $\mathbb{Q}$ and the entries of the matrices in $\Gamma$.
We say that $\bp$ (or $\tilde{\bp}$) is {\em  $\Gamma$-generic}
if the set of coordinates of the image of $\tilde{\bp}$ is algebraically independent over $\mathbb{Q}_{\Gamma}$.
Note that this definition does not depend on the choice of representative vertices. A $\Gamma$-symmetric framework $(G,\bp)$ is called
\emph{$\Gamma$-generic} if $\bp$ is $\Gamma$-generic.

Further, we say that $(G,\bp)$ is \emph{$\Gamma$-regular} if the rigidity matrix $R(G,\bp)$ has
maximal rank  among all $\Gamma$-symmetric realizations of $G$ (see also \cite{BS1}). If a framework is $\Gamma$-generic, then it is clearly also $\Gamma$-regular.


\subsection{Block-diagonalization of the rigidity matrix}\label{sec:blockdia}
It is shown in \cite{KG2,BS2} that the rigidity matrix of a symmetric framework
 can be transformed into a block-diagonalized form using techniques from group representation theory.
In the following, we will briefly present the details of this fundamental result in order to clarify the
 combinatorics underlying our further analyses in the subsequent sections.

For an $m\times n$ matrix $A$ and a $p\times q$ matrix $B$, $A\otimes B$ denotes
the {\em Kronecker product} of $A$ and $B$.
The following are well-known properties of this algebraic operation:
\begin{description}
\item $(A+B)\otimes C=A\otimes C+B\otimes C$  and $C\otimes (A+B)=C\otimes A+C\otimes B$.
\item $(A\otimes B)(C\otimes D)=(AC)\otimes (BD)$.
\item $(A\otimes B)^{\top}=A^{\top}\otimes B^{\top}$.
\end{description}
Given two matrix representations $\rho_1$ and $\rho_2$ of a group $\Gamma$,
the {\em tensor product} $\rho_1\otimes \rho_2$ is defined by
$\rho_1\otimes \rho_2(\gamma)=\rho_1(\gamma)\otimes \rho_2(\gamma)$ for $\gamma\in \Gamma$.

A matrix $M:\mathbb{R}^d\rightarrow \mathbb{R}^h$ is called a {\em $\Gamma$-linear map} of
$\rho_1$ and $\rho_2$  if $M\rho_1(\gamma)=\rho_2(\gamma)M$ for $\gamma \in \Gamma$.
The set of all $\Gamma$-linear maps of $\rho_1$ and $\rho_2$
forms a linear space which is denoted by ${\rm Hom}_{\Gamma}(\rho_1,\rho_2)$.

Let $(G,\bp)$ be a $\Gamma$-symmetric framework with respect
to a free action $\theta:\Gamma\rightarrow {\rm Aut}(G)$ and a homomorphism $\tau:\Gamma\rightarrow O(\mathbb{R}^d)$.
We denote by $P_V:\Gamma\rightarrow GL(\mathbb{R}^V)$
the linear representation of $\Gamma$ induced by $\theta$ over $V(G)$,
that is,
$P_V(\gamma)$ is the
permutation matrix of the permutation $\theta(\gamma)$ of $V(G)$.
Specifically, $P_V(\gamma)=[\delta_{i,\theta(\gamma)(j)})]_{i,j}$, where $\delta$ denotes the Kronecker delta symbol.
Similarly, let $P_E:\Gamma\rightarrow GL(\mathbb{R}^E)$
be the linear representation of $\Gamma$ consisting of
permutation matrices of permutations induced by $\theta$ over $E(G)$.

Let $\vec{G}$ be a directed graph obtained from $G$ by assigning an orientation to each edge
so that it preserves the action $\theta$
(i.e., an edge $\{u,v\}$ is directed from $u$ to $v$ if and only if
$\{\gamma u,\gamma v\}$ is directed from $\gamma u$ to $\gamma v$).
The incidence matrix $I_{\vec{G}}$ of $\vec{G}$ is the $|E(G)|\times |V(G)|$ matrix,
where the row of $e=(i,j)\in E(\vec{G})$ has the entries $-1$ and $1$ in the columns of $i$ and $j$, respectively,
and the other entries are zero.

It is important to notice that since $\theta$ is an action on $G$
we have $I_{\vec{G}}\in {\rm Hom}_{\Gamma}(P_V,P_E)$.
To see this, we let for each $e\in E(G)$, $I_e$ be the $|E(G)|\times |V(G)|$ matrix
obtained from $I_{\vec{G}}$ by changing each entry to zero except those in the row of $e$.
Then $I_{\vec{G}}=\sum_{e\in E(\vec{G})} I_e$, and we can easily verify that
\[
 P_E(\gamma)I_e P_V(\gamma)^{\top}=I_{\theta(\gamma)(e)} \qquad \text{for all } \gamma\in \Gamma.
\]
This relation can naturally be extended to rigidity matrices,
as shown in \cite{BS2,owen}. Here we give a short proof.
\begin{theorem}
\label{thm:block}
Let $\Gamma$ be a finite group with $\tau:\Gamma\rightarrow O(\mathbb{R}^d)$,
$G$ be a $\Gamma$-symmetric graph with a free action $\theta$ (on $V(G)$)
and $(G,\bp)$ be a $\Gamma$-symmetric framework with respect to $\theta$ and  $\tau$.
Then $R(G,\bp)\in {\rm Hom}_{\Gamma}(\tau \otimes P_V,P_E)$.
\end{theorem}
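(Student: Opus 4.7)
The plan is to mimic the per-edge argument that was just established for the incidence matrix $I_{\vec G}$. Decompose the rigidity matrix into single-row summands
\[
R(G,\bp) \;=\; \sum_{e \in E(\vec G)} R_e,
\]
where $R_e$ is obtained from $R(G,\bp)$ by zeroing every row except the one indexed by $e$. Because the orientation of $\vec G$ was chosen to preserve the action $\theta$, the map $e \mapsto \theta(\gamma)(e)$ is a bijection of $E(\vec G)$. Hence, once I verify the per-edge identity
\[
P_E(\gamma)\, R_e\, \bigl((\tau \otimes P_V)(\gamma)\bigr)^{\top} \;=\; R_{\theta(\gamma)(e)} \qquad (\gamma \in \Gamma,\ e \in E(\vec G)),
\]
summing over $e$ will yield $P_E(\gamma)\, R(G,\bp)\, ((\tau \otimes P_V)(\gamma))^{\top} = R(G,\bp)$. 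Since $(\tau \otimes P_V)(\gamma)$ is orthogonal (a Kronecker product of an orthogonal matrix and a permutation matrix), this is equivalent to $R(G,\bp)\,(\tau \otimes P_V)(\gamma) = P_E(\gamma)\, R(G,\bp)$, which is exactly the intertwining property claimed by the theorem.

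To verify the per-edge identity for $e=(u,v)$, observe first that the unique non-zero row of $R_e$ has the block $(p_u - p_v)^{\top}$ in the $d$ columns of vertex $u$ and the block $(p_v - p_u)^{\top}$ in the $d$ columns of vertex $v$. Left-multiplying by $P_E(\gamma)$ moves this row from index $e$ to index $\theta(\gamma)(e)$ without altering its column entries, exactly as in the incidence-matrix case. The factor $P_V(\gamma)^{\top}$ inside $((\tau \otimes P_V)(\gamma))^{\top}$ then relocates the vertex-column groups of $u$ and $v$ to the column groups of $\theta(\gamma)(u)$ and $\theta(\gamma)(v)$, again as in the computation $P_E(\gamma)\, I_e\, P_V(\gamma)^{\top} = I_{\theta(\gamma)(e)}$ recalled above. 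Finally, the remaining Kronecker factor $\tau(\gamma)^{\top}$ acts on each $d$-block from the right, and the $\Gamma$-symmetry of $\bp$ in \eqref{eq:symmetric_func} combined with the orthogonality of $\tau(\gamma)$ gives
\[
(p_u - p_v)^{\top}\, \tau(\gamma)^{\top} \;=\; \bigl(\tau(\gamma)(p_u - p_v)\bigr)^{\top} \;=\; (p_{\theta(\gamma) u} - p_{\theta(\gamma) v})^{\top},
\]
which is precisely the block appearing in the column group of $\theta(\gamma)(u)$ of the unique non-zero row of $R_{\theta(\gamma)(e)}$; the analogous match holds for the $v$-block. So both sides of the per-edge identity agree entry by entry.

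The main obstacle is really just bookkeeping: one must fix a consistent Kronecker-product convention for $(\tau \otimes P_V)(\gamma)$ so that its action on columns aligns with the column ordering of $R(G,\bp)$ fixed in Section~2, and one must carefully apply $\tau(\gamma)^{\top} = \tau(\gamma)^{-1}$ when moving $\tau(\gamma)$ across the bar vectors $p_u - p_v$. Once these conventions are in place, the proof is a direct extension of the already-established relation $P_E(\gamma)\, I_e\, P_V(\gamma)^{\top} = I_{\theta(\gamma)(e)}$, augmented only by the extra Kronecker factor $\tau(\gamma)$ whose effect is controlled exactly by the $\Gamma$-symmetry of the framework; no new idea beyond those already present in the excerpt is required.
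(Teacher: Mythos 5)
Your proof is correct and follows essentially the same route as the paper's: both decompose $R(G,\bp)=\sum_{e\in E(\vec G)}R_e$, establish the per-edge identity $P_E(\gamma)R_e\,((\tau\otimes P_V)(\gamma))^{\top}=R_{\theta(\gamma)(e)}$ by combining the incidence-matrix relation $P_E(\gamma)I_e P_V(\gamma)^{\top}=I_{\theta(\gamma)(e)}$ with the $\Gamma$-symmetry of $\bp$, and sum over edges. The only stylistic difference is that the paper makes the Kronecker decomposition $R_e=\bp(e)^{\top}\otimes I_e$ explicit before applying the identity $(A\otimes B)(C\otimes D)=(AC)\otimes(BD)$, whereas you carry out the same computation by tracking row and column blocks directly.
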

\begin{proof}
Let $R_e$ be the $|E(G)|\times d|V(G)|$ matrix obtained from $R(G,\bp)$ by changing each entry to zero
except those in the row of $e$.
As above, we consider the directed graph $\vec{G}$, and for each $e=(u,v)$, we let $\bp(e)=\bp(v)-\bp(u)$.
Note that $R(G,\bp)=\sum_{e\in E(\vec{G})} R_e=\sum_{e\in E(\vec{G})} \bp(e)^{\top}\otimes I_{e}$,
where $I_e$ is defined as above.
For each $e\in E(\vec{G})$ and $\gamma \in \Gamma$, we now have
\begin{align*}
P_E(\gamma)(\bp(e)^{\top}\otimes I_{e})(\tau(\gamma)\otimes P_V(\gamma))^{\top}
&=P_E(\gamma)(\bp(e)^{\top}\tau(\gamma)^{\top})\otimes (I_{e}P_V(\gamma)^{\top}) \\
&=(\tau(\gamma)\bp(e))^{\top}\otimes (P_E(\gamma)I_{e}P_V(\gamma)^{\top}) \\
&=\bp(\theta(\gamma)(e))^{\top}\otimes I_{\theta(\gamma)(e)} \\
&=R_{\theta(\gamma)(e)},
\end{align*}
where for the third equation we used the fact that $(G,\bp)$ is $\Gamma$-symmetric and hence
$\tau(\gamma)\bp(e)=\tau(\gamma)(\bp(u)-\bp(v))=\bp(\theta(\gamma)(u))-\bp(\theta(\gamma)(v))=\bp(\theta(\gamma)(e))$.
Therefore,
we obtain $P_E(\gamma) R(G,\bp) (\tau^{\top}(\gamma)\otimes P_V(\gamma))=\sum_{e\in E(\vec{G})} R_{\theta(\gamma)(e)}=R(G,\bp)$.
\end{proof}

Since $R(G,\bp)\in {\rm Hom}_{\Gamma}(\tau \otimes P_V,P_E)$,
there are non-singular matrices $S$ and $T$ such that
$T^{\top}R(G,\bp)S$ is block-diagonalized, by Schur's lemma.
If $\rho_0,\ldots, \rho_r$ are the irreducible representations of $\Gamma$,
then for an appropriate choice of symmetry-adapted coordinate systems, the rigidity matrix takes on the following block form
\begin{equation}
\label{rigblocks}
T^{\top}R(G,\bp)S:=\widetilde{R}(G,\bp)
=\left(\begin{array}{ccc}\widetilde{R}_{0}(G,\bp) & & \mathbf{0}\\ & \ddots & \\\mathbf{0} &  &
\widetilde{R}_{r}(G,\bp) \end{array}\right)\textrm{,}
\end{equation}
where the submatrix block $\widetilde{R}_i(G,\bp)$ corresponds to the irreducible representation $\rho_i$ of $\Gamma$.
The kernel of $\widetilde{R}_i(G,\bp)$ consists of all infinitesimal motions of $(G,\bp)$
which are symmetric with respect to $\rho_i$
(see \cite{BS2} for details).


\subsection{Fully-symmetric motions and the orbit rigidity matrix}
Suppose that $\rho_0$ is the trivial irreducible representation of $\Gamma$, i.e.,
$\rho_0(\gamma)=1$ for all $\gamma\in \Gamma$.
The kernel of $\widetilde{R}_0(G,\bp)$ consists of all infinitesimal motions of $(G,\bp)$
which exhibit the full symmetry of $\Gamma$ (see also Fig.~\ref{fulsym}).
Specifically, an infinitesimal motion $\bmm:V(G)\rightarrow \mathbb{R}^d$ of $(G,\bp)$ is called
{\em fully $\Gamma$-symmetric} if
\begin{equation}
\bmm(\theta(\gamma)v)=\tau(\gamma)\bmm(v) \qquad \text{ for all $v\in V(G)$ and $\gamma\in\Gamma$}.
\end{equation}
We say that $(G,\bp)$ is {\em symmetry-forced (infinitesimally) rigid} if every
fully $\Gamma$-symmetric infinitesimal motion is trivial.

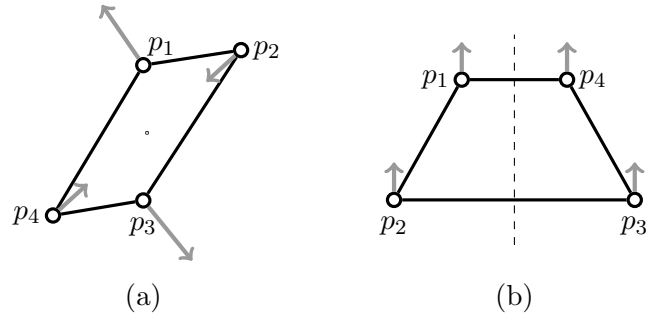
\begin{figure*}[htp]
\begin{center}
\begin{tikzpicture}[rotate=90,very thick,scale=1]
\tikzstyle{every node}=[circle, draw=black, fill=white, inner sep=0pt, minimum width=5pt];
    \path (0.1,1.2) node (p1) [label = left: $p_{4}$] {} ;
    \path (2.1,0) node (p4) [label = above right: $p_{1}$]{} ;
    \path (2.3,-1.3) node (p3) [label = right: $p_{2}$] {} ;
     \path (0.3,0) node (p2) [label = below: $p_{3}$] {} ;
        \draw (p1) -- (p4);
      \draw (p3) -- (p4);
     \draw (p2) -- (p3);
      \draw (p2) -- (p1);
            \draw [ultra thick, ->, black!40!white](p1) -- (0.52,0.74);
      \draw [ultra thick, ->, black!40!white](p3) -- (1.88,-0.84);
      \draw [ultra thick, ->, black!40!white](p2) -- (-0.5,-0.65);
      \draw [ultra thick, ->, black!40!white](p4) -- (2.9,0.55);
      \filldraw[fill=black, draw=black]
    (1.2,-0.05) circle (0.004cm);
     \node [draw=white, fill=white] (b) at (-1,-0) {(a)};
              \end{tikzpicture}
    \hspace{1cm}
            \begin{tikzpicture}[very thick,scale=1]
\tikzstyle{every node}=[circle, draw=black, fill=white, inner sep=0pt, minimum width=5pt];
    \path (-0.7,0.8) node (p1) [label = left: $p_{1}$] {} ;
    \path (0.7,0.8) node (p4) [label = right: $p_{4}$]{} ;
    \path (-1.6,-0.8) node (p2) [label = below: $p_{2}$] {} ;
     \path (1.6,-0.8) node (p3) [label = below: $p_{3}$] {} ;
      \draw (p1) -- (p4);
    \draw (p1) -- (p2);
    \draw (p3) -- (p4);
    \draw (p2) -- (p3);
     \draw [dashed, thin] (0,-1.4) -- (0,1.4);
     \draw [ultra thick, ->, black!40!white] (p1) -- (-0.7,1.3);
      \draw [ultra thick, ->, black!40!white] (p4) -- (0.7,1.3);
      \draw [ultra thick, ->, black!40!white] (p2) -- (-1.6,-0.3);
      \draw [ultra thick, ->, black!40!white] (p3) -- (1.6,-0.3);
      \node [draw=white, fill=white] (b) at (0,-2.1) {(b)};
        \end{tikzpicture}
 \end{center}
\vspace{-0.3cm}
\caption{Fully-symmetric infinitesimal motions of frameworks in the plane: (a) a  $\mathcal{C}_2$-symmetric non-trivial infinitesimal motion; (b) a $\mathcal{C}_s$-symmetric trivial infinitesimal motion.}
\label{fulsym}
\end{figure*}

To simplify the detection of fully $\Gamma$-symmetric motions of $(G,\bp)$,
the orbit rigidity matrix  of  $(G,\bp)$ was introduced in \cite{BSWW}.
The orbit rigidity matrix is equivalent to $\widetilde{R}_0(G,\bp)$, and has successfully been used for characterizing
symmetry-forced rigid frameworks in \cite{jkt,tan,MT1}.
In the next section, we will extend this concept to each irreducible representation of $\Gamma$.


\section{`Anti-symmetric' orbit rigidity matrices for bar-joint frameworks with Abelian point group symmetry}
\label{sec:block}
Let $(G,\bp)$ be a $\Gamma$-symmetric framework in $\mathbb{R}^d$
with respect to $\theta:\Gamma\rightarrow {\rm Aut}(G)$ and  $\tau:\Gamma\rightarrow O(\mathbb{R}^d)$.
In general, the entries of each block $\widetilde{R}_j(G,p)$ are not as simple as those of $\widetilde{R}_0(G,\bp)$.
However, if we restrict our attention to the case where $\Gamma$ is an Abelian group, then
we can specifically describe an `anti-symmetric' orbit rigidity matrix
for each of the irreducible representations of $\Gamma$.

For simplicity, we will first consider the case where $\Gamma$ is cyclic (Section~\ref{sec:cyclic}).
The argument is then easily extended to general Abelian groups in Section~\ref{subsec:non_cyclic}.
Throughout these two subsections we assume, again for the sake of simplicity, that $\theta$  acts freely on $E(G)$.
In Section~\ref{sec:nonfree}, we will discuss the case when $\theta$ may not be free  on $E(G)$.
In Section~\ref{sec:bar_join_texamples}, we give several examples.

\subsection{Case of cyclic groups}
\label{sec:cyclic}
Throughout this subsection,
 $\Gamma$ is assumed to be a cyclic group $\mathbb{Z}/k\mathbb{Z}=\{0,1,2,\dots, k-1\}$ of order $k$,
and $\theta$ acts freely on $E(G)$.
It is an elementary fact from group representation theory
that $\Gamma=\mathbb{Z}/k\mathbb{Z}$ has $k$ non-equivalent
irreducible representations  $\rho_0,\rho_1,\dots, \rho_{k-1}$,
and that each of these representations is one-dimensional.
Specifically, for $j=0,1,\dots, k-1$, we have
\begin{align*}
\rho_j:\Gamma&\rightarrow \mathbb{C}\setminus\{0\} \\
i &\mapsto \omega^{ij},
\end{align*}
where  $\omega$ denotes $e^{\frac{2\pi \sqrt{-1}}{k}}$, a root of unity.
To cope with such representations,  we need to extend the underlying field to $\mathbb{C}$ if $k\geq 3$,  and regard
$R(G,\bp)$ as a linear function from $(\mathbb{C}^k)^{V(G)}$
to $\mathbb{C}^{E(G)}$.
Next we show how each block $\widetilde{R}_j(G,\bp)$ is described in the complex field.


\subsubsection{Decompositions of the regular representation of $\Gamma$}

Let $\rho_{\rm reg}:\Gamma\rightarrow GL(\mathbb{R}^{k})$ be the regular representation of $\Gamma$,
that is, regarding $\Gamma$ as a subgroup of the symmetric group $S_{k}$,
$\rho_{\rm reg}(\gamma)=[\delta_{i,\gamma+ j}]_{i,j}$ for any $\gamma\in \Gamma$.
Recall that $\rho_{\rm reg}$ is equivalent to $\bigoplus_{j=0}^{k-1} \rho_j$.

For $j=0,1,\dots, k-1$, let
$b_j=(1,\bar{\omega}^{j},\bar{\omega}^{2j},\dots, \bar{\omega}^{(k-1)j})^{\top}$ be a vector in
$\mathbb{C}^k$, where $\bar{\omega}$ is the complex conjugate of $\omega$.
Then we have
\[
 \rho_{\rm reg}(i)b_j=\omega^{ij}b_j=\rho_j(i)b_j.
\]
This says that $b_j$ is a common eigenvector of $\{\rho_{\rm reg}(i)\mid i=0,1,\dots, k-1\}$,
and the one-dimensional subspace $I_j$ spanned by $b_j$ is an invariant subspace
corresponding to $\rho_j$.
Hence, by decomposing $\mathbb{C}^k$ into $\bigoplus_{j=0}^{k-1} I_j$,
$\rho_{\rm reg}$ is diagonalized  to $\bigoplus_{j=0}^{k-1} \rho_j$.

Next, consider $\tau\otimes \rho_{\rm reg}$.
Since the character of the Kronecker product of two representations is written by the coordinate-wise product of the
corresponding two characters, we see that the multiplicity of $\rho_j$ in $\tau\otimes \rho_{\rm reg}$ is equal to
${\rm Trace}(\tau(0))$, that is, equal to $d$. Hence, $\tau\otimes \rho_{\rm reg}$ is equivalent to
$\bigoplus_{j=0}^{k-1} d\rho_j$.

For $j=0,1,\dots, k-1$, we define a $d$-dimensional subspace $J_j$ of $\mathbb{C}^{dk}$ by
\begin{equation}
\label{eq:i_sym_mot}
 J_j=\left\{\begin{bmatrix} \tau(0) \\ \bar{\omega}^{j}\tau(1) \\ \vdots \\ \bar{\omega}^{j(k-1)}\tau(k-1)\end{bmatrix}x\colon x\in \mathbb{C}^{d}
 \right\}
\end{equation}
(where $\begin{bmatrix} \tau(0) \\ \vdots \\\bar{\omega}^{j(k-1)}\tau(k-1)\end{bmatrix}$ denotes a $dk\times d$ matrix).
Then observe that for each $i\in \Gamma$,
\[
 (\tau \otimes \rho_{\rm reg}(i)) y =\rho_j(i) y \qquad \text{for all } y \in J_j,
\]
and hence $J_j$ is a common eigenspace of $\{\tau\otimes \rho_{\rm reg}(i)\colon i=0,\dots, k-1\}$,
and $J_j$ is an invariant subspace corresponding to $\rho_j$.
$\mathbb{C}^{dk}$ is thus decomposed into invariant subspaces $\bigoplus_{j=0}^{k-1} J_j$.


\subsubsection{Decompositions of $P_E$ and $\tau\otimes P_V$}

Since our goal is to characterize the infinitesimal rigidity of symmetric frameworks in terms of their quotient graphs,
let us introduce a quotient $\Gamma$-gain graph $(H,\psi)$ of $G$ with a covering map $c:G\rightarrow H$.

Observe, then, that since $\Gamma$ acts freely  on $V(G)$,
$P_V$ is the direct sum of $|V(H)|$ copies of $\rho_{\rm reg}$, each of which represents an action of
$\Gamma$ over a fiber $c^{-1}(v)$.
Thus, $P_V=\bigoplus_{\tilde{v}\in V(H)} \rho_{\rm reg}$, and $P_V$ is equivalent to
$\bigoplus_{j=0}^{k-1} |V(H)|\rho_j$.
Similarly, if we assume that $\Gamma$ acts freely on $E(G)$,
then $P_E=\bigoplus_{\tilde{e}\in E(H)} \rho_{\rm reg}$,
and $P_E$ is equivalent to $\bigoplus_{j=0}^{k-1} |E(H)|\rho_j$.
(We will treat the case where $\Gamma$ does not act freely  on the edge set of $G$ in Section~\ref{sec:nonfree}.)

Observe also that
$\tau\otimes P_V=\tau\otimes (\bigoplus_{\tilde{v}\in V(H)} \rho_{\rm reg})=\bigoplus_{\tilde{v}\in V(H)}\tau\otimes \rho_{\rm reg}$.
Thus, $\tau\otimes P_V$ is equivalent to $\bigoplus_{j=0}^{k-1} d|V(H)| \rho_j$.
In total, each block
$\widetilde{R}_j(G,p)$ corresponding to $\rho_j$ has the size $|E(H)|\times d|V(H)|$.

The decompositions of $P_E$ and $\tau\otimes P_V$ give us further information about $\widetilde{R}_j(G,\bp)$.
Since $\Gamma$ acts freely  on $G$,
each vertex orbit is associated with a $dk$-dimensional subspace of $(\mathbb{C}^d)^{V(G)}$, while
each edge orbit is associated with a $k$-dimensional subspace of $\mathbb{C}^{E(G)}$.
In other words,
$\mathbb{C}^{V(G)}$
and $\mathbb{C}^{E(G)}$  can be written
as $\bigoplus_{\tilde{v}\in V(H)} \mathbb{C}^{dk}$ and  $\bigoplus_{\tilde{e}\in E(H)} \mathbb{C}^k$ in terms of the quotient graph $H$.

Since $\tau\otimes P_V=\bigoplus_{\tilde{v}\in V(H)} \tau \otimes \rho_{\rm reg}$ and
 $P_E=\bigoplus_{\tilde{e}\in E(H)} \rho_{\rm reg}$,
it follows that
$J_j^{\rm mo}:=\bigoplus_{\tilde{v}\in V(H)} J_j$ is an invariant subspace of $\mathbb{C}^{V(G)}$ while
$I_j^{\rm st}:=\bigoplus_{\tilde{e}\in E(H)} I_j$ is an invariant subspace of $\mathbb{C}^{E(G)}$ with respect to $\rho_j$.
Therefore $\widetilde{R}_j(G,\bp)$ is a linear mapping from $J_j^{\rm mo}$ to $I_j^{\rm st}$.

An infinitesimal motion $\bmm:V(G)\rightarrow \mathbb{C}^d$ contained in $J_j^{\rm mo}$ is
said to be {\em $\rho_j$-symmetric}.
By definition (\ref{eq:i_sym_mot}), $\bmm$ is $\rho_j$-symmetric if and only if
\begin{equation}
\label{eq:j_symmetric}
 \bmm(\gamma v)=\bar{\omega}^{j\gamma}\tau(\gamma) \bmm(v) \qquad \text{for all $\gamma\in \Gamma$ and $v\in V(G)$}.
\end{equation}

Recall that $\bmm:V(G)\rightarrow \mathbb{C}^d$ is an infinitesimal motion of $(G,\bp)$ if
\begin{equation}
\label{eq:inf}
 \langle \bp(u)-\bp(v), \bmm(u)-\bmm(v)\rangle=0 \qquad \text{for all } \{u,v\}\in E(G).
\end{equation}
This system of linear equations for $\bmm$ is redundant if $\bmm$ is restricted to be $\rho_j$-symmetric,
and we now eliminate such redundancy  as follows.

Recall that each edge orbit is written as
a set $c^{-1}(\te)=\{\{\gamma u,\gamma \psi_{\te} v\}\colon \gamma\in \Gamma\}$
of edges of $G$,
where $\psi_{\te}$ is the label assigned to $\te$ in $(H,\psi)$.
So (\ref{eq:inf}) can be written as
\begin{equation}
\langle \bp(\gamma u)-\bp(\gamma \psi_{\te} v), \bmm(\gamma u)-\bmm(\gamma \psi_{\te} v)\rangle=0 \qquad (\gamma\in \Gamma)
\end{equation}
for each $\te\in E(H)$.
By the symmetry of $\bp$ and $\bmm$ with respect to $\Gamma$, these $k$ equations can be simplified to one equation
\begin{equation}
\label{eq:j_inf}
\langle \bp(u)-\tau(\psi_{\te})\bp(v), \bmm(u)-\bar{\omega}^{j\psi_{\te}}\tau(\psi_{\te}) \bmm(v)\rangle=0
\end{equation}
for each edge orbit.

Let us define the joint $\tilde{\bp}(\tilde{w})$ and the motion $\tilde{\bmm}(\tilde{w})$ of a vertex $\tilde{w}\in V(H)$
to be the joint $\bp(v)$ and the motion $\bmm(v)$ of the representative vertex $v$ of the vertex orbit $c^{-1}(\tilde{w})$.
Then the analysis can be done on the quotient graph $(H,\psi)$.
More formally, for a $\Gamma$-gain graph $(H,\psi)$ and $\tilde{\bp}:V(H)\rightarrow \mathbb{R}^d$,
a map $\tilde{\bmm}:V(H)\rightarrow \mathbb{C}^d$ is said to be a $\rho_j$-symmetric motion of $(H,\psi,\tilde{\bp})$ if
\begin{equation}
\label{eq:j_inf_quot}
\langle \tilde{\bp}(\tilde{u})-\tau(\psi_{\te})\tilde{\bp}(\tilde{v}), \tilde{\bmm}(\tilde{u})-\bar{\omega}^{j\psi_{\te}}\tau(\psi_{\te}) \tilde{\bmm}(\tilde{v})\rangle=0 \qquad \text{for all } \te=(\tilde{u},\tilde{v})\in E(H).
\end{equation}
We define the $\rho_j$-orbit rigidity matrix, denoted by $O_j(H,\psi,\tilde{\bp})$,  as the  $|E(H)|\times d|V(H)|$ matrix
associated with the system (\ref{eq:j_inf_quot}),
where each vertex has the corresponding $d$ columns,
each edge has the corresponding row, and the row corresponding to
$\te=(\tilde{u},\tilde{v})\in E(H)$ is given by
\[\begin{array}{ccccc}
       & \overbrace{\hspace{25mm}}^{\tilde{u}} & & \overbrace{\hspace{40mm}}^{\tilde{v}} & \\
      0\dots0 & \tilde{\bp}(\tilde{u})-\tau(\psi_{\te})\tilde{\bp}(\tilde{v}) & 0\dots0 & \omega^{j\psi_{\te}}(\tilde{\bp}(\tilde{v})-\tau(\psi_{\te})^{-1}\tilde{\bp}(\tilde{u})) & 0\dots0 \\
    \end{array},\]
where each vector is assumed to be transposed, and if $\te$ is a loop at $\tilde{v}$ the entries of $\tilde{v}$ become the sum of the two entries given above.

Due to the one-to-one correspondence between $J_j^{\rm mo}$ and $(\mathbb{C}^d)^{V(H)}$, we conclude the
following.
\begin{proposition}
\label{prop:cyclic}
Let $\Gamma$ be a cyclic group of order $k$, $(G,\bp)$ be a $\Gamma$-symmetric framework in $\mathbb{R}^d$,
and $(H,\psi)$ be the quotient $\Gamma$-gain graph.
Then, for each $j=0,\dots, k-1$
\[
 {\rm rank}\ \widetilde{R}_j(G,\bp)={\rm rank}\ O_j(H,\psi,\tilde{\bp}).
\]
\end{proposition}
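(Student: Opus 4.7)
The plan is to exhibit explicit linear isomorphisms between the domains and codomains appearing in the two matrices and to check that, under these isomorphisms, the two linear maps agree. Once that is done, the rank equality is immediate.

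First I would define two isomorphisms determined by the $\Gamma$-action. On the motion side, define
$\Phi_V\colon (\mathbb{C}^d)^{V(H)}\to J_j^{\rm mo}$
by sending $\tilde\bmm$ to the map $\bmm\colon V(G)\to\mathbb{C}^d$ with $\bmm(\gamma v)=\bar\omega^{j\gamma}\tau(\gamma)\tilde\bmm(c(v))$ for every representative vertex $v$ and every $\gamma\in\Gamma$. By the description of $J_j$ in (\ref{eq:i_sym_mot}), the image is precisely a $\rho_j$-symmetric motion, i.e.\ lies in $J_j^{\rm mo}$, and the free action of $\Gamma$ on $V(G)$ plus the choice of representatives makes $\Phi_V$ a bijection of $d|V(H)|$-dimensional spaces. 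On the stress side, define $\Phi_E\colon\mathbb{C}^{E(H)}\to I_j^{\rm st}$ by sending the standard basis vector of an edge $\te\in E(H)$ to the vector supported on the orbit $c^{-1}(\te)$ whose entry at the edge $\{\gamma u,\gamma\psi_\te v\}$ equals $\bar\omega^{j\gamma}$; this is exactly the copy of $b_j$ living in the edge-orbit block, so $\Phi_E$ is a linear isomorphism of $|E(H)|$-dimensional spaces.

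Next I would verify the commutativity of the diagram
$\widetilde R_j(G,\bp)\circ\Phi_V=\Phi_E\circ O_j(H,\psi,\tilde\bp)$.
Pick an edge orbit represented by $\te=(\tilde u,\tilde v)\in E(H)$ with gain $\psi_\te$, and evaluate the row of $R(G,\bp)$ corresponding to the edge $\{\gamma u,\gamma\psi_\te v\}$ on the motion $\bmm=\Phi_V(\tilde\bmm)$. Using $\Gamma$-symmetry of $\bp$ we get $\bp(\gamma u)-\bp(\gamma\psi_\te v)=\tau(\gamma)\bigl(\bp(u)-\tau(\psi_\te)\bp(v)\bigr)$, and using $\rho_j$-symmetry of $\bmm$ we get
\[
\bmm(\gamma u)-\bmm(\gamma\psi_\te v)=\bar\omega^{j\gamma}\tau(\gamma)\bigl(\tilde\bmm(\tilde u)-\bar\omega^{j\psi_\te}\tau(\psi_\te)\tilde\bmm(\tilde v)\bigr).
\]
Since $\tau(\gamma)\in O(\mathbb{R}^d)$ preserves the inner product, the row evaluates to
\[
\bar\omega^{j\gamma}\,\bigl\langle\,\tilde\bp(\tilde u)-\tau(\psi_\te)\tilde\bp(\tilde v),\ \tilde\bmm(\tilde u)-\bar\omega^{j\psi_\te}\tau(\psi_\te)\tilde\bmm(\tilde v)\,\bigr\rangle,
\]
which is $\bar\omega^{j\gamma}$ times the corresponding entry of $O_j(H,\psi,\tilde\bp)\,\tilde\bmm$ by (\ref{eq:j_inf_quot}). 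Thus the restriction of $R(G,\bp)\bmm$ to the orbit of $\te$ is the scalar $[O_j(H,\psi,\tilde\bp)\,\tilde\bmm]_{\te}$ times $b_j$, which is exactly $\Phi_E(O_j(H,\psi,\tilde\bp)\,\tilde\bmm)$ restricted to that orbit.

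From this commutativity and the fact that $\Phi_V,\Phi_E$ are linear isomorphisms, $\widetilde R_j(G,\bp)$ and $O_j(H,\psi,\tilde\bp)$ are equivalent as linear maps, so in particular have the same rank. The main obstacle, and where care is needed, is the second step: one has to double-check the bookkeeping of gains $\psi_\te$ versus group elements $\gamma$ and the phase factor $\bar\omega^{j\psi_\te}$ on the $\tilde v$-block, since any sign or conjugation slip will invalidate the identification with $O_j$. Apart from that, the argument is a direct symmetry-reduction computation enabled by Theorem \ref{thm:block} and the decomposition $(\tau\otimes P_V,P_E)=\bigoplus_j(J_j^{\rm mo},I_j^{\rm st})$ established above.
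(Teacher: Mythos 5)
Your argument is essentially the proof the paper intends: the text preceding Proposition~\ref{prop:cyclic} records that $\widetilde R_j(G,\bp)$ acts from $J_j^{\rm mo}$ to $I_j^{\rm st}$, that $\rho_j$-symmetry collapses the $|\Gamma|$ equations of a fiber $c^{-1}(\te)$ into the single equation (\ref{eq:j_inf_quot}), and then simply asserts the conclusion via the identification $J_j^{\rm mo}\cong(\mathbb{C}^d)^{V(H)}$. You make this identification explicit as $\Phi_V$, add the analogous edge-side isomorphism $\Phi_E$ (the copy of $b_j$ on each edge orbit), and verify the commuting square $\widetilde R_j(G,\bp)\circ\Phi_V=\Phi_E\circ O_j(H,\psi,\tilde\bp)$ by a direct computation that uses $\Gamma$-symmetry of $\bp$, the $\rho_j$-symmetry of $\bmm$, and orthogonality of $\tau(\gamma)$. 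That is exactly the paper's reduction, just written out carefully; there is no genuine difference in route, only in the level of detail.

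One small point worth flagging, which your own closing remark anticipates: your derivation produces the factor $\bar\omega^{j\psi_{\te}}$ on the $\tilde v$-block (consistent with equation (\ref{eq:j_inf_quot})), whereas the displayed row of $O_j(H,\psi,\tilde\bp)$ in the paper carries $\omega^{j\psi_{\te}}$. These two conventions differ by complex conjugation, i.e.\ by replacing $\rho_j$ with $\rho_{k-j}$, so the rank statement is unaffected (and for $\mathcal{C}_s$ and $\mathcal{C}_2$ the two agree since $\omega=-1$); but if you were to cite the matrix entries directly you should reconcile the sign of the exponent. Apart from that notational wrinkle, your verification and the resulting rank equality are correct.
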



\subsection{Case of non-cyclic groups}
\label{subsec:non_cyclic}
It is well known that any finite Abelian group $\Gamma$ is
isomorphic to $\mathbb{Z}/k_1\mathbb{Z}\times \dots \times \mathbb{Z}/k_l\mathbb{Z}$
for some positive integers $k_1,\dots, k_l$.
Thus, we may denote each element of $\Gamma$ by ${\bm i}=(i_1,\dots,i_l)$, where
 $0\leq i_1\leq k_1,\dots, 0\leq i_l\leq k_l$, and regard $\Gamma$ as an additive group.

Let $k=|\Gamma|=k_1 k_2 \dots k_l$.
$\Gamma$ has $k$ non-equivalent irreducible representations
which are denoted by  $\{\rho_{\bm j}\colon {\bm j}\in \Gamma\}$.
Specifically, for each ${\bm j}\in \Gamma$, $\rho_{\bm j}$ is defined by
\begin{align} \label{eq:abelian_rho}
\rho_{{\bm j}}:\Gamma&\rightarrow \mathbb{C}/\{0\}  \nonumber\\
{\bm i}&\mapsto \omega_1^{i_1j_1}\cdot\omega_2^{i_2j_2} \cdot \ldots \cdot\omega_l^{i_lj_l} ,
\end{align}
where $\omega_t=e^{\frac{2\pi \sqrt{-1}}{k_t}}$, $t=1,\ldots, l$.


We now apply the analysis for cyclic groups by simply replacing each index with a tuple of indices.
By Theorem~\ref{thm:block}, $R(G,\bp)$ is decomposed into $k$ blocks,
and the block corresponding to $\rho_{\bm j}$ is denoted by $\widetilde{R}_{\bm j}(G,\bp)$.

For each ${\bm j}=(j_1,\dots, j_l)\in \Gamma$,
let $b_{\bm j}$ be the $k$-dimensional vector such that
each coordinate is indexed by a tuple ${\bm i}\in \Gamma$ and
its ${\bm i}$-th coordinate is equal to $\bar{\omega}_1^{i_1j_1} \cdot \ldots \cdot\bar{\omega}_l^{i_lj_l}$.
Then, for  the regular representation $\rho_{\rm reg}$ of $\Gamma$, we have
\[
 \rho_{\rm reg}({\bm i})b_{{\bm j}}=\omega_1^{i_1j_1}\cdot \ldots \cdot\omega_l^{i_lj_l} b_{\bm j}
=\rho_{\bm j}({\bm i})b_{\bm j},
\]
and hence $b_{\bm j}$ is a common eigenvector of $\{\rho_{\rm reg}({\bm i})\mid {\bm i}\in \Gamma\}$.
Hence, the one-dimensional subspace $I_{\bm j}$ spanned by $b_{\bm j}$ is an invariant subspace of
$\mathbb{C}^{k}$ corresponding to $\rho_{\bm j}$.

A similar analysis determines the common eigenspace $J_{\bm j}$  of
$\{\tau\otimes \rho_{\rm reg}({\bm i})\mid {\bm i}\in \Gamma\}$ for the eigenvalue $\rho_{\bm j}({\bm i})$ as a counterpart to
the one defined in (\ref{eq:i_sym_mot}).

Following the analysis given in the previous subsection, we see that
$\widetilde{R}_{\bm j}(G,\bp)$ is a linear mapping from $J_{\bm j}^{\rm mo}:=\bigoplus_{\tv\in V(H)}J_{\bm j}$
to $I_{\bm j}^{\rm st}:=\bigoplus_{\te\in E(H)} I_{\bm j}$.
If we define the $\rho_{\bm j}$-orbit rigidity matrix, denoted by $O_{\bm j}(H,\psi,\tilde{\bp})$,
as the $|E(H)|\times d|V(H)|$ matrix, where each $\te=(\tilde{u},\tilde{v})\in E(H)$ has the associated row
\[\begin{array}{ccccc}
       & \overbrace{\hspace{25mm}}^{\tilde{u}} & & \overbrace{\hspace{45mm}}^{\tilde{v}} & \\
      0\dots0 & \tilde{\bp}(\tilde{u})-\tau(\psi_{\te})\tilde{\bp}(\tilde{v}) & 0\dots0 & \rho_{\bm j}(\psi_e)(\tilde{\bp}(\tilde{v})-\tau(\psi_{\te})^{-1}\tilde{\bp}(\tilde{u})) & 0\dots0 \\
    \end{array},\]
then we have the following result.
\begin{proposition}
\label{prop:abelian}
Let $\Gamma$ be a finite Abelian group, $(G,\bp)$ be a $\Gamma$-symmetric framework in $\mathbb{R}^d$,
and $(H,\psi)$ be the quotient $\Gamma$-gain graph.
Then, for each ${\bm j}\in \Gamma$,
\[
 {\rm rank}\ \widetilde{R}_{\bm j}(G,\bp)={\rm rank}\ O_{\bm j}(H,\psi,\tilde{\bp}).
\]
\end{proposition}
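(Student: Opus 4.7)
The plan is to mimic the cyclic-case argument of Proposition~\ref{prop:cyclic} essentially verbatim, since each irreducible representation $\rho_{\bm j}$ of a finite Abelian $\Gamma$ is still one-dimensional and the combinatorics of vertex/edge orbits is unchanged. The only substantive change is that single indices $j\in \{0,\dots,k-1\}$ are replaced by tuples ${\bm j}\in \Gamma$, and the roots of unity $\omega^{ij}$ are replaced by the products $\rho_{\bm j}({\bm i})=\omega_1^{i_1j_1}\cdots\omega_l^{i_lj_l}$ from (\ref{eq:abelian_rho}).

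First I would set up the eigenspace decomposition of the regular representation: for each ${\bm j}\in\Gamma$, the vector $b_{\bm j}$ defined in Section~\ref{subsec:non_cyclic} is a common eigenvector of $\{\rho_{\rm reg}({\bm i})\colon {\bm i}\in\Gamma\}$ with eigenvalue $\rho_{\bm j}({\bm i})$, spanning the one-dimensional $\rho_{\bm j}$-isotypic component $I_{\bm j}\subset\mathbb{C}^k$, and the $d$-dimensional subspace
\[
J_{\bm j}=\left\{\begin{bmatrix}\overline{\rho_{\bm j}({\bm i})}\,\tau({\bm i})\end{bmatrix}_{{\bm i}\in\Gamma}x\colon x\in\mathbb{C}^d\right\}
\]
is the $\rho_{\bm j}$-isotypic component of $\tau\otimes\rho_{\rm reg}$, by the same character computation (the multiplicity equals ${\rm Trace}(\tau(0))=d$). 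Since $\theta$ acts freely on both $V(G)$ and $E(G)$, $\tau\otimes P_V=\bigoplus_{\tv\in V(H)}\tau\otimes\rho_{\rm reg}$ and $P_E=\bigoplus_{\te\in E(H)}\rho_{\rm reg}$, so $J_{\bm j}^{\rm mo}:=\bigoplus_{\tv\in V(H)}J_{\bm j}$ and $I_{\bm j}^{\rm st}:=\bigoplus_{\te\in E(H)}I_{\bm j}$ are the $\rho_{\bm j}$-isotypic components of the domain and codomain of $R(G,\bp)$. By Theorem~\ref{thm:block} and Schur's lemma, $\widetilde{R}_{\bm j}(G,\bp)$ is precisely $R(G,\bp)$ restricted to a linear map $J_{\bm j}^{\rm mo}\to I_{\bm j}^{\rm st}$.

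Next I would transport this restricted map to a matrix of size $|E(H)|\times d|V(H)|$ using the natural bases: fixing a representative vertex for each orbit identifies $J_{\bm j}^{\rm mo}$ with $(\mathbb{C}^d)^{V(H)}$ via $\tilde{\bmm}(\tv)\leftrightarrow\bmm(v)$, and the basis vector $b_{\bm j}$ identifies $I_{\bm j}^{\rm st}$ with $\mathbb{C}^{E(H)}$. A $\rho_{\bm j}$-symmetric motion is characterized by $\bmm({\bm i}v)=\overline{\rho_{\bm j}({\bm i})}\tau({\bm i})\bmm(v)$ for all ${\bm i}\in\Gamma$, $v\in V(G)$, the exact tuple-indexed analogue of (\ref{eq:j_symmetric}). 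Plugging this into the infinitesimal rigidity equation (\ref{eq:inf}) for the $|\Gamma|$ edges in an orbit $c^{-1}(\te)$ with $\te=(\tu,\tv)$ and gain $\psi_{\te}$, the $\Gamma$-symmetry of $\bp$ and $\bmm$ collapses them all to the single equation
\[
\langle\tilde{\bp}(\tu)-\tau(\psi_{\te})\tilde{\bp}(\tv),\ \tilde{\bmm}(\tu)-\overline{\rho_{\bm j}(\psi_{\te})}\,\tau(\psi_{\te})\tilde{\bmm}(\tv)\rangle=0,
\]
which, after multiplying out the inner product and using $\tau(\psi_{\te})^\top=\tau(\psi_{\te})^{-1}$, is exactly the row of $O_{\bm j}(H,\psi,\tilde{\bp})$ associated with $\te$ (with the $\tv$-block appearing with factor $\rho_{\bm j}(\psi_{\te})$, matching the definition of the orbit matrix in Section~\ref{subsec:non_cyclic}). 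Hence the restricted map and the $\rho_{\bm j}$-orbit rigidity matrix have the same rank.

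The only place that could plausibly obstruct the argument is the bookkeeping in the passage from tuple-indexed eigenvalues to the single collapsed edge equation, i.e.\ verifying that $\rho_{\bm j}({\bm i}+{\bm i}')=\rho_{\bm j}({\bm i})\rho_{\bm j}({\bm i}')$ is used correctly to identify the coefficients of $\tilde{\bmm}(\tv)$ in the collapsed row with $\overline{\rho_{\bm j}(\psi_{\te})}$. This is immediate from (\ref{eq:abelian_rho}), so no genuinely new idea is required beyond the cyclic case; the proof is essentially a notational extension of Proposition~\ref{prop:cyclic} justified by the fact that all irreducibles of a finite Abelian group are one-dimensional characters.
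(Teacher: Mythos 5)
Your proposal is correct and takes essentially the same route as the paper: the paper proves Proposition~\ref{prop:cyclic} in detail for cyclic groups and then observes that for a general finite Abelian group one "simply replaces each index with a tuple of indices," since every irreducible of $\Gamma$ is a one-dimensional character; you have written out exactly that translation (eigenspaces $I_{\bm j}$, $J_{\bm j}$, isotypic decomposition of $\tau\otimes P_V$ and $P_E$, collapse of the $|\Gamma|$ edge-orbit equations to a single row via $\rho_{\bm j}$-symmetry) with no substantive deviation. The one place where care is needed, as you note, is the scalar factor on the $\tv$-block of the collapsed row; this only changes the row by a nonzero scalar and hence does not affect the rank, matching the paper's convention.
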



\subsection{Group actions which are not free on the edge set}\label{sec:nonfree}

In the previous sections, we restricted ourselves to the situation, where the group $\Gamma$ acts freely on both the vertex set and the edge set of the graph $G$. Let us now also consider the case, where $\Gamma$ acts freely on the vertex set, but not on the edge set of $G$. In other words, there exists an element $\gamma\in\Gamma$ with $\theta(\gamma)(u)= v$ and $\theta(\gamma)(v)= u$ for some $\{u,v\}\in E(G)$.
Since $\Gamma$ still acts freely on $V(G)$, it follows that if $\Gamma$ does not act freely on $c^{-1}((\tilde{u},\tilde{v}))$, then the edge orbit of $(\tilde{u},\tilde{v})$ is of size $\frac{|\Gamma|}{2}$, that is, $\Gamma/(\mathbb{Z}/2\mathbb{Z})$ acts freely on $c^{-1}((\tilde{u},\tilde{v}))$.

Now, let $(G,\bp)$ be a $\Gamma$-symmetric framework, where $\Gamma$ is a finite Abelian group of order $k$, and suppose there are $n$ edge orbits of size $k$ and $m$ edge orbits of size $\frac{k}{2}$.
Let $g_1,\ldots, g_t$ be the non-trivial elements of $\Gamma$ which fix an edge of $G$, and let $m_{i}$ be the number of edge orbits whose representatives are fixed by $g_i$. (Note that if an edge $e$ of $G$ is fixed by an element of $\Gamma$, then so is every other element in the orbit of $e$, because $\Gamma$ is Abelian.)
So we have $m=\sum_{i=1}^{t}m_{i}$, and the character of $P_E$ is the vector $\chi(P_E)$ which has $nk+m\frac{k}{2}$ in the first entry corresponding to $id\in \Gamma$, $m_{i}\frac{k}{2}$ in the entry corresponding to $g_i$, $i=1,\ldots, t$, and $0$ elsewhere.

 Now, let $\rho_{\bm j}$ be an irreducible representation of $\Gamma$.
Then,  since each $g_i$ must be an involution,
$\rho_{\bm j}(g_i)$ is $1$ or $-1$.
Without loss of generality assume $\rho_{\bm j}(g_i)=1$ for $1\leq i\leq s$
and $\rho_{\bm j}(g_i)=-1$ for $s+1\leq i\leq t$.
It is a well known result from group representation theory
that the dimension of the invariant subspace $I_{\bm j}^{st}$ of $\mathbb{C}^{|E(G)|}$ is given by $\frac{1}{k}(\chi(P_E)\cdot \rho_{\bm j})$.
Thus, \begin{eqnarray*}\textrm{dim} (I_{\bm j}^{st}) & = & \frac{1}{k}\Big(nk+m\frac{k}{2}+\sum_{i=1}^s m_{i}\frac{k}{2}- \sum_{i=s+1}^t m_{i}\frac{k}{2} \Big)\\
 & = & \frac{1}{k}(nk+ \sum_{i=1}^s m_{i}k)\\
 & = & n+ \sum_{i=1}^s m_{i}.
 \end{eqnarray*}
It follows that the submatrix block $\widetilde{R}_{\bm j}(G,\bp)$ has $n+ \sum_{i=1}^s m_{i}$ many rows.


Although the size of $\widetilde{R}_{\bm_j}(G,\bp)$ and that of $O_{\bm j}(H,\psi,\tilde{\bp})$ are different,
we can still use $O_{\bm j}(H,\psi,\tilde{\bp})$ to compute the rank of $\widetilde{R}_{\bm j}(G,\bp)$,
as Proposition~\ref{prop:abelian} still holds.
To see this, observe that if $g_i$ fixes $c^{-1}(\te)$ for some $\te\in E(H)$,
 then $\te$ is a loop with $\psi(\te)=g_i$.
Since $g_i^2=id$, if $\rho_{\bm j}(g_i)=-1$,
the row corresponding to $\te$ in $O_{\bm j}(H,\psi,\tilde{\bp})$ turns out to be a zero vector.
The following proposition implies that the reverse implication is also true,
where a loop $\te$ is called a {\em zero loop in $O_{\bm j}(H,\psi,\tilde{\bp})$} if the row of $\te$
is a zero vector in $O_{\bm j}(H,\psi,\tilde{\bp})$.
\begin{proposition}
\label{lem:zero_loop}
Let $\Gamma$ be an Abelian  group along with a faithful representation
$\tau:\Gamma\rightarrow O(\mathbb{R}^d)$,
$(G,\bp)$ be a $\Gamma$-symmetric framework with respect to $\theta$ and $\tau$, and
$(H,\psi)$ be a quotient $\Gamma$-gain graph.
Then, for each ${\bm j}\in \Gamma$, a loop $\te$ is a zero loop  in $O_{\bm j}(H,\psi,\tilde{\bp})$
if and only if $\rho_{\bm j}(\psi_{\te})=-1$ and $\psi_{\te}^2=id$.
\end{proposition}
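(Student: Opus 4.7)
The plan is to split the proof into the easy sufficiency and the more delicate necessity. For sufficiency, I would assume $\psi_{\te}^2 = \id$ and $\rho_{\bm j}(\psi_{\te}) = -1$. Since $\tau$ is a homomorphism, $\tau(\psi_{\te})^2 = \tau(\psi_{\te}^2) = I$, hence $\tau(\psi_{\te})^{-1} = \tau(\psi_{\te})$. Substituting $\alpha := \rho_{\bm j}(\psi_{\te}) = -1$ into the row formula at the column of $\tilde{v}$, the two summands $\tilde{\bp}(\tilde{v}) - \tau(\psi_{\te})\tilde{\bp}(\tilde{v})$ and $\alpha(\tilde{\bp}(\tilde{v}) - \tau(\psi_{\te})^{-1}\tilde{\bp}(\tilde{v}))$ cancel exactly, so the entire row vanishes. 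This direction uses only that $\tau$ is a group homomorphism.

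For necessity, suppose the row of $\te$ in $O_{\bm j}(H,\psi,\tilde{\bp})$ is zero. Writing $p = \tilde{\bp}(\tilde{v})$, $\psi = \psi_{\te}$, and $\alpha = \rho_{\bm j}(\psi)$, the vanishing of the unique nonzero block of the row gives $\bigl((1+\alpha)I - \tau(\psi) - \alpha\tau(\psi)^{-1}\bigr)p = 0$. Multiplying through by $\tau(\psi)$ and rearranging, this becomes $(\tau(\psi) - I)(\tau(\psi) - \alpha I)p = 0$. Setting $w := (\tau(\psi)-I)p$ yields $\tau(\psi)w = \alpha w$. The framework axiom $\bp(v)\neq \bp(\psi v)$, valid because the edge orbit $c^{-1}(\te)$ contains $\{v,\psi v\}$, forces $w\neq 0$. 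Since $w\in \mathbb{R}^d$ and $\tau(\psi)\in O(\mathbb{R}^d)$, the equation $\tau(\psi)w = \alpha w$ with $w\neq 0$ forces $\alpha$ to be real; as $\alpha$ is a root of unity, $\alpha\in\{-1,+1\}$.

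I would next rule out $\alpha = 1$: in that case $w\in \ker(\tau(\psi)-I)$, while $w\in \im(\tau(\psi)-I)$ by construction. Because the orthogonal (hence normal) operator $\tau(\psi)$ has image and kernel equal to orthogonal complements of each other, this forces $w=0$, a contradiction. Hence $\alpha = -1$, establishing the first conclusion $\rho_{\bm j}(\psi_{\te}) = -1$. Substituting $\alpha = -1$ back into the vanishing relation collapses it to $\tau(\psi)^2 p = p$. Under the $\Gamma$-genericity implicit in the setting, the coordinates of $p$ are algebraically independent over $\mathbb{Q}_{\Gamma}$, and all entries of $\tau(\psi)^2 - I$ lie in $\mathbb{Q}_{\Gamma}$, so $(\tau(\psi)^2-I)p=0$ forces the operator identity $\tau(\psi)^2 = I$. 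The faithfulness of $\tau$ then yields $\psi_{\te}^2 = \id$.

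The main obstacle is this last transition: passing from $\tau(\psi)^2 p = p$ for the single vector $p=\tilde{\bp}(\tilde{v})$ to the operator identity $\tau(\psi)^2 = I$ needed to apply faithfulness of $\tau$. In the $\Gamma$-generic regime---where these orbit rigidity matrices are applied for the later combinatorial characterizations---this step is automatic, since one only needs $p$ to avoid the kernel of any nonzero matrix with entries in $\mathbb{Q}_{\Gamma}$. The remaining ingredients---the cancellation in sufficiency, the quadratic factorization obtained by multiplication by $\tau(\psi)$, the realness argument coming from $\tau(\psi)\in O(\mathbb{R}^d)$, and the orthogonal-complement property of $\ker$ and $\im$ for the normal operator $\tau(\psi)-I$---are routine linear-algebra manipulations.
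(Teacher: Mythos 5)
Your proof is correct, and it takes a genuinely different route from the paper's in the necessity direction. The paper immediately promotes the vanishing of the row to the operator identity $I_d+\omega I_d-A-\omega A^{-1}=0$ (interpreting ``zero loop'' as structurally zero, i.e., zero for every $\tilde{\bp}$), then reads off the conclusion by a minimal-polynomial argument: $A$ is diagonalizable, its eigenvalues are confined to $\{1,\omega\}$, and since $A$ is a real orthogonal matrix its non-real eigenvalues occur in conjugate pairs, which together with $A\neq I_d$ forces $\omega=-1$ and $A^2=I_d$. You instead stay with the concrete vector equation $(\tau(\psi_{\te})-I)(\tau(\psi_{\te})-\alpha I)\,p=0$, extract a nonzero real witness $w=(\tau(\psi_{\te})-I)p$ from the framework axiom $\bp(v)\neq\bp(\psi_{\te}v)$, deduce $\alpha\in\mathbb{R}$ and hence $\alpha\in\{\pm1\}$ from $\tau(\psi_{\te})w=\alpha w$, and rule out $\alpha=1$ using the orthogonality of $\ker$ and $\im$ for the normal operator $\tau(\psi_{\te})-I$. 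What each approach buys: the paper's route is shorter and dispatches the $\alpha=1$ case and the ``non-real $\alpha$'' case simultaneously via the conjugate-pair argument, but leans silently on genericity from the very first sentence; your route isolates exactly where the framework axiom alone suffices (everything up to $\alpha=-1$ and $\tau(\psi_{\te})^2p=p$) and exactly where $\Gamma$-genericity (or $\Gamma$-regularity) is indispensable (the final jump $\tau(\psi_{\te})^2p=p\Rightarrow\tau(\psi_{\te})^2=I$). That bookkeeping is a genuine improvement in transparency, since the proposition as stated does not hypothesize genericity and can fail for special $\tilde{\bp}$ in dimension $d\geq 3$; the paper's ``by definition'' step has the same reliance hidden inside it.

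Two small points of precision. First, in the step where you rule out $\alpha=1$, the normal operator whose kernel and image are orthogonal complements should be $\tau(\psi_{\te})-I$, not $\tau(\psi_{\te})$ (the latter is invertible, so the statement is vacuous for it); the argument you intend, namely that $\tau(\psi_{\te})-I$ is normal because $\tau(\psi_{\te})$ is orthogonal and $I$ commutes with it, is correct. Second, when you pass from $(\tau(\psi_{\te})-I)(\tau(\psi_{\te})-\alpha I)p=0$ to $\tau(\psi_{\te})w=\alpha w$ you implicitly commute the two factors; they do commute, being polynomials in $\tau(\psi_{\te})$, but it is worth saying so since you then also use the other factorization order when ruling out $\alpha=1$.
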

\begin{proof}
For simplicity, let $\omega=\rho_{\bm j}(\psi_{\te})$ and $A=\tau(\psi_{\te})\neq I_d$.
By definition, the row of $\te$ is a zero vector if and only if
$I_d+\omega I_d-A-\omega A^{-1}=0$.
The latter condition is equivalent to $(A-I_d)(A-\omega I_d)=0$.
This holds if $\omega=-1$ and $A^2=I_d$, which implies the sufficiency.

To see the necessity, let $\mu_A$ be the minimal polynomial of $A$.
Since $A$ is diagonalizable (as $\Gamma$ is Abelian) and $\mu_A$ divides $(t-1)(t-\omega )$,
an elementary theorem of linear algebra implies that
the eigenvalues of $A$ are only $1$ and $\omega$.
Since $\Gamma$ is Abelian and $A\neq I_d$,  we have $\omega=-1$.
This also implies $A^2=I_d$, and hence $\psi_{\te}^2=id$.
\end{proof}

It follows from Proposition \ref{lem:zero_loop} and the remarks above that the number of rows of  $\widetilde{R}_{\bm j}(G,\bp)$ equals the number of non-zero rows of $O_{\bm j}(H,\psi,\tilde{p})$. Moreover, these two matrices clearly have the same number of columns, and by the same reasoning as in the previous sections, Propositions~\ref{prop:cyclic} and \ref{prop:abelian} still hold.


\subsection{Examples}\label{sec:bar_join_texamples}

\subsubsection{Reflection symmetry $\mathcal{C}_s$}

The symmetry group $\mathcal{C}_{s}$ has two non-equivalent real irreducible representations each of which is of dimension 1. In the Mulliken notation, they are denoted by $A'$ and $A''$ (see Table \ref{tabcs}).

\begin{table}[htp]
\begin{center}
\begin{tabular}{r||r|r}
$\mathcal{C}_{s}$   &   $id$  &  $s$   \\\hline\hline
$A'=\rho_0$  &    1  &  1 \\\hline
$A''=\rho_1$  &    1  &  -1 \\
\end{tabular}
\end{center}
\caption{The irreducible representations of $\mathcal{C}_s$.}
\label{tabcs}
\end{table}

It follows that the block-decomposed rigidity matrix $\widetilde{R}(G,\bp)$ of a $\mathcal{C}_{s}$-symmetric framework $(G,\bp)$ consists of only two blocks: the submatrix block $\widetilde{R}_0(G,\bp)$  corresponding to the trivial representation $\rho_0$,
and the submatrix block $\widetilde{R}_1(G,\bp)$ corresponding to the representation $\rho_1$. The block $\widetilde{R}_0(G,\bp)$ is equivalent to the (fully symmetric) orbit rigidity matrix (see also \cite{BSWW}). The block $\widetilde{R}_1(G,\bp)$ describes the $\rho_1$-symmetric (or simply `anti-symmetric') infinitesimal rigidity properties of $(G,\bp)$, where
an infinitesimal motion $\bmm$  of  $(G,\bp)$ is anti-symmetric if
 \begin{equation*} \tau(s)\big(\bmm_i\big)=-\bmm_{\theta(s)(i)}\textrm{ for all } i\in V(G)\textrm{,}\end{equation*} i.e., if all the velocity vectors of $\bmm$ are reversed by $s$ (see also Fig.~\ref{antisym}). As shown in Proposition~\ref{prop:cyclic}, $\widetilde{R}_1(G,\bp)$  is equivalent to the anti-symmetric orbit rigidity matrix  $O_1(H,\psi,\tilde{\bp})$.

\begin{figure}[htp]
\begin{center}
\begin{tikzpicture}[very thick,scale=1]
\tikzstyle{every node}=[circle, draw=black, fill=white, inner sep=0pt, minimum width=5pt];
    \path (160:1.2cm) node (p6) [label = left: $p_3$] {} ;
    \path (120:1.2cm) node (p1) [label = left: $p_1$] {} ;
    \path (255:1.2cm) node (p2) [label = left: $p_2$] {} ;
     \path (20:1.2cm) node (p3) [label = below right: $p_6$] {} ;
    \path (60:1.2cm) node (p4) [label = right: $p_4$] {} ;
    \path (285:1.2cm) node (p5) [label = right: $p_5$] {} ;
     \draw (p1) -- (p4);
     \draw [ultra thick, ->, black!40!white] (p6) --  (160:0.7)node[draw=white, below right=0.3pt]{};
     \draw (p1) -- (p5);
     \draw (p1) -- (p6);
     \draw (p2) -- (p4);
     \draw (p2) -- (p5);
     \draw (p2) -- (p6);
     \draw (p3) -- (p4);
     \draw (p3) -- (p5);
     \draw (p3) -- (p6);
     \draw [ultra thick, ->, black!40!white] (p1) -- node[rectangle, draw=white, above right=3pt] {}(120:1.7);
     \draw [ultra thick, ->, black!40!white] (p2) -- node[rectangle, draw=white, below right=3pt] {}(255:1.7);
    \draw [ultra thick, ->, black!40!white] (p3) -- node[rectangle, draw=white, above=4pt] {}(20:1.7);
     \draw [ultra thick, ->, black!40!white] (p4)  -- node[rectangle, draw=white, left=4pt] {} node[rectangle, draw=white, below=27pt] {}(60:0.7);
    \draw [ultra thick, ->, black!40!white] (p5) -- (285:0.7);
     \draw [dashed, thin] (0,-1.6) -- (0,1.6);
     \node [draw=white, fill=white] (c) at (0,-2.5) {(a)};
        \end{tikzpicture}
                \hspace{0.2cm}
 \begin{tikzpicture}[very thick,scale=1]
\tikzstyle{every node}=[circle, draw=black, fill=white, inner sep=0pt, minimum width=5pt];
    \path (-0.7,0.8) node (p1) [label = left: $p_{1}$]  {} ;
    \path (0.7,0.8) node (p4)[label = right: $p_{4}$] {} ;
    \path (-1.6,-0.8) node [label = below: $p_{2}$](p2)  {} ;
     \path (1.6,-0.8) node [label = below: $p_{3}$](p3)  {} ;
      \draw (p1) -- (p4);
    \draw (p1) -- (p2);
    \draw (p3) -- (p4);
    \draw (p2) -- (p3);
     \draw [dashed, thin] (0,-1.6) -- (0,1.6);
     \draw [ultra thick, ->, black!40!white] (p1) -- (-0.5,0.3);
     \draw [ultra thick, ->, black!40!white] (p4) -- (0.9,1.3);
      \draw [ultra thick, ->, black!40!white] (p2) -- (-2.2,-1);
      \draw [ultra thick, ->, black!40!white] (p3) -- (1,-0.6);
      \node [draw=white, fill=white] (b) at (0,-2.1) {(b)};
        \end{tikzpicture}
            \hspace{0.2cm}
         \begin{tikzpicture}[very thick,scale=1]
\tikzstyle{every node}=[circle, draw=black, fill=white, inner sep=0pt, minimum width=5pt];
    \path (-0.7,0.8) node (p1) [label = above: $p_{1}$]  {} ;
    \path (0.7,0.8) node (p4)[label = above: $p_{4}$] {} ;
    \path (-1.6,-0.8) node [label = below: $p_{2}$](p2)  {} ;
      \path (1.6,-0.8) node [label = below: $p_{3}$](p3)  {} ;
      \draw (p1) -- (p4);
    \draw (p1) -- (p2);
    \draw (p3) -- (p4);
    \draw (p2) -- (p3);
     \draw [dashed, thin] (0,-1.6) -- (0,1.6);
     \draw [ultra thick, ->, black!40!white] (p1) -- (-0.2,0.8);
      \draw [ultra thick, ->, black!40!white] (p4) -- (1.2,0.8);
      \draw [ultra thick, ->, black!40!white] (p2) -- (-1.1,-0.8);
      \draw [ultra thick, ->, black!40!white] (p3) -- (2.1,-0.8);
    \node [draw=white, fill=white] (b) at (0,-2.1) {(c)};
        \end{tikzpicture}
\end{center}
\vspace{-0.3cm}
\caption{Anti-symmetric infinitesimal motions of frameworks with mirror symmetry in the plane: (a), (b)  anti-symmetric infinitesimal motions; (c) an anti-symmetric trivial infinitesimal motion.}
\label{antisym}
\end{figure}
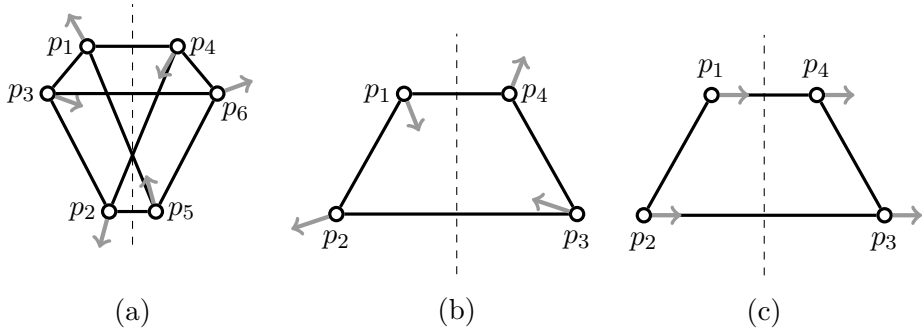

For example, consider the framework  $(G,\bp)$ shown in Fig.~\ref{antisym}(a) which is $\mathcal{C}_s$-symmetric with respect to $\theta$ and $\tau$, where $\theta:\mathcal{C}_s\to {\rm Aut}(G)$ is the action defined by $\theta(s)=(1\, 4)(2\, 5)(3\, 6)$ and $\tau:\mathcal{C}_s\to O(\mathbb{R}^2)$ is the homomorphism defined by $\tau(s)=\left(\begin{array}{c c} -1 & 0\\ 0 & 1 \end{array}\right)$. The corresponding quotient $\mathcal{C}_s$-gain graph $(H,\psi)$ is depicted in Fig.~\ref{csquotgr}, and the anti-symmetric orbit rigidity matrix $O_1(H,\psi,\tilde{\bp})$ of  $(G,\bp)$  is the following $6\times 6$ matrix:
\begin{displaymath}\bordermatrix{
                &1&2&3\cr
                (1,3;id)&\tilde{\bp}(1)-\tilde{\bp}(3) &  0 \ 0  & \tilde{\bp}(3)-\tilde{\bp}(1)\cr
                 (1,2;s)& \tilde{\bp}(1)-\tau(s)\tilde{\bp}(2) &  -(\tilde{\bp}(2)-\tau(s)^{-1}\tilde{\bp}(1)) & 0 \ 0  \cr
                (2,3;id)& 0 \ 0  & \tilde{\bp}(2)-\tilde{\bp}(3)   & \tilde{\bp}(3)-\tilde{\bp}(2) \cr
(1,1;s)& 0 \ 0  & 0 \ 0   & 0 \ 0 \cr
(2,2;s)& 0 \ 0  & 0 \ 0   & 0 \ 0 \cr
(3,3;s)& 0 \ 0  & 0 \ 0   & 0 \ 0 \cr
}
                \end{displaymath}
where an edge $(u,v)$ with label $g$ is denoted by $(u,v;g)$.

Recall from Proposition~\ref{lem:zero_loop} that each  loop in $(H,\psi)$ gives rise to a zero vector in $O_1(H,\psi,\tilde{\bp})$, and hence $O_1(H,\psi,\tilde{\bp})$ has only three non-trivial rows. Geometrically, this is also obvious, as any loop in $(H,\psi)$ clearly does not constitute any constraint if we restrict ourselves to anti-symmetric infinitesimal motions (see again Fig.~\ref{antisym}(a)).

\begin{figure}[htp]
\begin{center}
\begin{tikzpicture}[very thick,scale=1]
\tikzstyle{every node}=[circle, draw=black, fill=white, inner sep=0pt, minimum width=5pt];
 \path (160:1.2cm) node (p6) [label = right: $3$] {} ;
 \path (120:1.2cm) node (p1) [label = left: $1$] {} ;
    \path (255:1.2cm) node (p2) [label = left: $2$] {} ;
\path
(p1) edge (p6);
\path
(p2) edge (p6);
\path[<-]
(p2) edge (p1);
\path
(p1) edge [loop above,->, >=stealth,shorten >=2pt,looseness=26] (p1);
\path
(p2) edge [loop below,->, >=stealth,shorten >=2pt,looseness=26] (p2);
\path
(p6) edge [loop left,->, >=stealth,shorten >=2pt,looseness=26] (p6);

 \node [draw=white, fill=white] (b) at (-0.05,-1.55) {$s$};
  \node [draw=white, fill=white] (b) at (-0.25,1.4) {$s$};
    \node [draw=white, fill=white] (b) at (-1.5,0.1) {$s$};
        \node [draw=white, fill=white] (b) at (-0.15,0) {$s$};
\end{tikzpicture}
\end{center}
\vspace{-0.3cm}
\caption{The $\mathcal{C}_s$-gain graph $(H,\psi)$ of the framework in Fig.~\ref{antisym}(a), where the directions and labels of edges with gain $id$ are omitted.}
\label{csquotgr}
\end{figure}
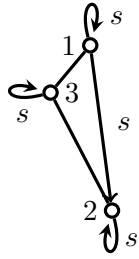


\subsubsection{Rotation symmetry $\mathcal{C}_{3}$}

Over the complex numbers, the symmetry group $\mathcal{C}_{3}$ has three non-equivalent one-dimensional irreducible representations. In the Mulliken notation, they are denoted by $A$, $E^{(1)}$ and $E^{(2)}$ (see Table \ref{tabc3}).

\begin{table}[htp]
\begin{center}
\begin{tabular}{c||r|r|r}
$\mathcal{C}_{3}$   &   $id$  &  $C_{3}$  &  $C_{3}^2$ \\\hline\hline
$A=\rho_0$  &    1  &  1  & 1\\\hline
$E^{(1)}=\rho_1$  &    1  &  $\omega$ & $\omega^2$ \\\hline
$E^{(2)}=\rho_2$  &    1  &  $\omega^2$ & $\omega$ \\
\end{tabular}
\end{center}
\caption{The irreducible representations of $\mathcal{C}_3$, where $\omega=\frac{2\pi \sqrt{-1}}{3}$.}
\label{tabc3}
\end{table}

It follows that the block-decomposed rigidity matrix $\widetilde{R}(G,\bp)$ of a $\mathcal{C}_3$-symmetric framework $(G,\bp)$ consists of three blocks: the submatrix block $\widetilde{R}_0(G,\bp)$  corresponding to the trivial representation $\rho_0$,
 the submatrix block $\widetilde{R}_1(G,\bp)$ corresponding to   $\rho_1$, and the submatrix block $\widetilde{R}_2(G,\bp)$ corresponding to $\rho_2$. By Proposition~\ref{prop:cyclic}, each block $\widetilde{R}_j(G,\bp)$ is equivalent to its corresponding orbit rigidity matrix $O_j(H,\psi,\tilde{\bp})$.

As an example, consider the $\mathcal{C}_3$-symmetric framework $(G,\bp)$ shown in Figure~\ref{c3ggr}, where $\theta:\mathcal{C}_3\to {\rm Aut}(G)$ is the action defined by $\theta(C_3)=(1\, 2\, 3)(4\, 5 \, 6)$, and $\tau:\mathcal{C}_3\to O(\mathbb{R}^2)$ is the homomorphism defined by $\tau(C_3)=\left(\begin{array}{c c} -\frac{1}{2} & -\frac{\sqrt{3}}{2}\\  \frac{\sqrt{3}}{2} & -\frac{1}{2} \end{array}\right)$. Note that for this example, each of the three orbit rigidity matrices is a $3\times 4$ matrix.

  \begin{figure}[htp]
\begin{center}
\begin{tikzpicture}[very thick,scale=1]
       \tikzstyle{every node}=[circle, draw=black, fill=white, inner sep=0pt, minimum width=5pt];
  \node (p1) at (90:1.4cm) [label = left: $p_{1}$]  {};
\node (p2) at (210:1.4cm)  [label = left: $p_{2}$] {};
\node (p3) at (330:1.4cm) [label = right: $p_{3}$]  {};
\node (p4) at (60:0.5cm) [label = above left: $p_{4}$]  {};
\node (p5) at (180:0.5cm)  [label = below: $p_{5}$] {};
\node (p6) at (300:0.5cm)  [label = above right: $p_{6}$] {};
   \draw (p2) -- (p3);
     \draw (p1) -- (p2);
     \draw (p1) -- (p3);
     \draw (p1) -- (p4);
     \draw (p2) -- (p5);
     \draw (p3) -- (p6);
     \draw (p6) -- (p4);
     \draw (p6) -- (p5);
     \draw (p5) -- (p4);
       \node [rectangle, draw=white, fill=white] (b) at (0,-1.5) {(a)};
        \end{tikzpicture}
          \hspace{1.5cm}
     \begin{tikzpicture}[very thick,scale=1]
\tikzstyle{every node}=[circle, draw=black, fill=white, inner sep=0pt, minimum width=5pt];
    \path (0,0.2) node (p2) [label = right: $5$] {} ;
   \path (-0.6,-0.5) node (p1) [label = right: $2$] {} ;
\path
(p2) edge (p1);
\path
(p1) edge [loop left,->, >=stealth,shorten >=2pt,looseness=26] (p1);
\path
(p2) edge [loop above,->, >=stealth,shorten >=2pt,looseness=26] (p2);
\node [rectangle, draw=white, fill=white] (b) at (-1,-0.85) {$C_3$};
\node [rectangle, draw=white, fill=white] (b) at (0.52,0.58) {$C_3$};
\node [rectangle, draw=white, fill=white] (b) at (0,-1.5) {(b)};
\end{tikzpicture}
\end{center}
\vspace{-0.3cm}
\caption{A $\mathcal{C}_3$-symmetric framework and its corresponding $\mathcal{C}_3$ quotient gain graph.}
\label{c3ggr}
\end{figure}
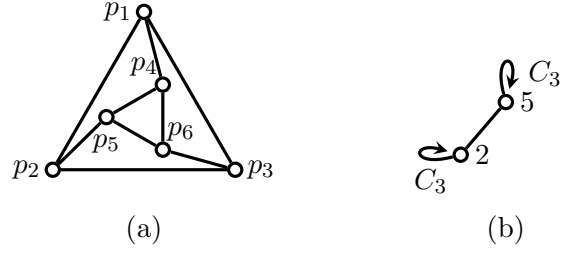

The orbit rigidity matrix $O_1(H,\psi,\tilde{\bp})$ is the $3\times 4$ matrix \begin{displaymath}\bordermatrix{
                &2&5\cr
                & \tilde{\bp}(2)-\tilde{\bp}(5)  & \tilde{\bp}(5)-\tilde{\bp}(2) \cr
                 & \tilde{\bp}(2)-\tau(C_3)\tilde{\bp}(2)+\omega\big(\tilde{\bp}(2)-\tau(C_3)^{-1}\tilde{\bp}(2)\big) &  0 \ 0  \cr
                & 0 \ 0   & \tilde{\bp}(5)-\tau(C_3)\tilde{\bp}(5)+\omega^2\big(\tilde{\bp}(5)-\tau(C_3)^{-1}\tilde{\bp}(5)\big) \cr},
                \end{displaymath}
                where the first row corresponds to the edge $(2,5;id)$,
the second row to the loop $(2,2;C_3)$, and the third row to the loop $(5,5;C_3)$.

The orbit rigidity matrix $O_2(H,\psi,\tilde{p})$ is the $3\times 4$ matrix
\begin{displaymath}\bordermatrix{
                &2&5\cr
                & \tilde{\bp}(2)-\tilde{\bp}(5)  & \tilde{\bp}(5)-\tilde{\bp}(2) \cr
                 & \tilde{\bp}(2)-\tau(C_3)\tilde{\bp}(2)+\omega^2\big(\tilde{\bp}(2)-\tau(C_3)^{-1}\tilde{\bp}(2)\big) &  0 \ 0  \cr
                & 0 \ 0   & \tilde{\bp}(5)-\tau(C_3)\tilde{\bp}(5)+\omega\big(\tilde{\bp}(5)-\tau(C_3)^{-1}\tilde{\bp}(5)\big) \cr},
                \end{displaymath}
                where the first row corresponds to the edge $(2,5;id)$,
the second row to the loop $(2,2;C_3)$, and the third row to the loop $(5,5;C_3)$.


\subsubsection{Dihedral symmetry $\mathcal{C}_{2v}$}

Finally, we consider the dihedral group $\mathcal{C}_{2v}=\{id,C_2,s_h,s_v\}$ of order four which is the only non-cyclic Abelian point group in the plane. In the following, we think of $\mathcal{C}_{2v}$ as the additive group $\mathbb{Z}/2\mathbb{Z}\times \mathbb{Z}/2\mathbb{Z}$, where $id=(0,0)$, $C_2=(0,1)$, $s_h=(1,0)$, and $s_v=(1,1)$.
This group has four non-equivalent irreducible linear representations each of which is real and one-dimensional. In the Mulliken notation, these representations are denoted by $A_{1}$, $A_{2}$, $B_{1}$, and $B_{2}$ (see Table~\ref{tab:c2v}).

\begin{table}[htp]
\begin{center}
\begin{tabular}{r||r|r|r|r}
$\mathcal{C}_{2v}$   &   $id$  &  $C_{2}$   & $s_{h}$   &    $s_{v}$\\\hline\hline
$A_{1}=\rho_{(0,0)}$  &    1  &  1 &  1  &  1\\\hline
$A_{2}=\rho_{(1,0)}$  &    1  &  1 &  -1  &  -1\\\hline
$B_{1}=\rho_{(0,1)}$  &    1  &  -1 &  1  &  -1\\\hline
$B_{2}=\rho_{(1,1)}$  &    1  &  -1 &  -1  &  1\\
\end{tabular}
\end{center}
\caption{The irreducible representations of $\mathcal{C}_{2v}$.}
\label{tab:c2v}
\end{table}

Thus, for the dihedral group $\mathcal{C}_{2v}$, the block-decomposed rigidity matrix $\widetilde{R}(G,\bp)$ consists of four blocks, each of which corresponds to one of the four irreducible representations of $\mathcal{C}_{2v}$. The submatrix block corresponding to $\rho_0$ is of course again equivalent to the (fully symmetric) orbit rigidity matrix. In the following, we give an example of a $B_1$-symmetric orbit rigidity matrix $O_{(0,1)}(H,\psi,\tilde{\bp})$ which, by Proposition~\ref{prop:abelian}, is equivalent to its corresponding submatrix block $\widetilde{R}_{(0,1)}(G,\bp)$.

Consider the $\mathcal{C}_{2v}$-symmetric framework $(G,\bp)$ shown in Figure~\ref{k44fl}(a), where $\theta:\mathcal{C}_{2v}\to {\rm Aut}(G)$ is the action defined by $\theta(s_h)=(1\, 4)(2\, 3)(5\, 8)(6 \, 7)$ and $\theta(s_v)=(1\, 2)(3\, 4)(5\, 6)(7 \, 8)$, and $\tau:\mathcal{C}_{2v}\to O(\mathbb{R}^2)$ is the homomorphism defined by $\tau(s_h)=\left(\begin{array}{c c} 1 & 0\\ 0 & -1 \end{array}\right)$ and $\tau(s_v)=\left(\begin{array}{c c} -1 & 0\\ 0 & 1 \end{array}\right)$.

\begin{figure}[htp]
\begin{center}
\begin{tikzpicture}[very thick,scale=1]
\tikzstyle{every node}=[circle, draw=black, fill=white, inner sep=0pt, minimum width=5pt];
        \path (-2.4,-0.6) node (p1) [label =  left: $p_4$] {} ;
        \path (2.4,-0.6) node (p2) [label =  right: $p_3$] {} ;
        \path (2.4,0.6) node (p3) [label =  right: $p_2$] {} ;
        \path (-2.4,0.6) node (p4) [label =  left: $p_1$] {} ;
        \path (-1.6,-1.2) node (p5) [label = left: $p_8$] {} ;
        \path (1.6,-1.2) node (p6) [label =  right: $p_7$] {} ;
        \path (1.6,1.2) node (p7) [label =  right: $p_6$] {} ;
        \path (-1.6,1.2) node (p8) [label =  left: $p_5$] {} ;
        \draw (p1) -- (p5);
        \draw (p1) -- (p6);
        \draw (p1) -- (p7);
        \draw (p1) -- (p8);
        \draw (p2) -- (p5);
        \draw (p2) -- (p6);
        \draw (p2) -- (p7);
        \draw (p2) -- (p8);
        \draw (p3) -- (p5);
        \draw (p3) -- (p6);
        \draw (p3) -- (p7);
        \draw (p3) -- (p8);
        \draw (p4) -- (p5);
        \draw (p4) -- (p6);
        \draw (p4) -- (p7);
        \draw (p4) -- (p8);
        \draw [->, ultra thick, black!40!white] (p4) -- (-1.9,0.23);
        \draw [->, ultra thick, black!40!white] (p3) -- (1.9,0.23);
        \draw [->, ultra thick, black!40!white] (p2) -- (1.9,-0.23);
        \draw [->, ultra thick, black!40!white] (p1) -- (-1.9,-0.23);
        \draw [->, ultra thick, black!40!white] (p8) -- (-1.85,1.85);
        \draw [->, ultra thick, black!40!white] (p7) -- (1.85,1.85);
        \draw [->, ultra thick, black!40!white] (p6) -- (1.85,-1.85);
        \draw [->, ultra thick, black!40!white] (p5) -- (-1.85,-1.85);
           \draw [dashed, thin] (0,-2) -- (0,2);
\draw [dashed, thin] (-3.3,0) -- (3.3,0);
\node [rectangle, draw=white, fill=white] (b) at (0,-2.1) {(a)};
       \end{tikzpicture}
       \hspace{1.5cm}
       \begin{tikzpicture}[very thick,scale=1]
\tikzstyle{every node}=[circle, draw=black, fill=white, inner sep=0pt, minimum width=5pt];
       \path (-0.9,-0.9) node (p4) [label =  left: $1$] {} ;
       \path (0.9,0.7) node (p8) [label =  right: $5$] {} ;
       \draw (p4) -- (p8);
       \path
(p4) edge [->,bend right=22] (p8);
  \path
(p4) edge [->,bend right=90] (p8);
  \path
(p4) edge [->,bend left=22] (p8);

\node [rectangle, draw=white, fill=white] (b) at (-0.65,0.17) {$C_2$};
\node [rectangle, draw=white, fill=white] (b) at (0.38,-0.5) {$s_h$};
\node [rectangle, draw=white, fill=white] (b) at (1.1,-0.6) {$s_v$};
\node [rectangle, draw=white, fill=white] (b) at (0,-2.1) {(b)};
       \end{tikzpicture}
       \end{center}
\vspace{-0.3cm}
\caption{A framework in $\mathcal{R}_{(K_{4,4},\mathcal{C}_{2v})}$ with a fully symmetric infinitesimal flex (a)  and its corresponding quotient $\mathcal{C}_{2v}$-gain graph (b), where the direction and label of the edge with gain $id$ is omitted.}
\label{k44fl}
\end{figure}
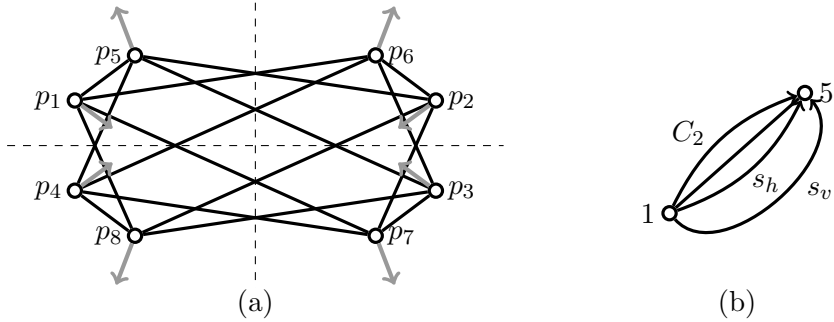

The $B_1$-symmetric orbit rigidity matrix $O_{(0,1)}(H,\psi,\tilde{\bp})$ of $(G,\bp)$ is the $4\times 4$ matrix
\begin{displaymath}\bordermatrix{
                &1&5\cr
                (1,5)&\tilde{\bp}(1)-\tilde{\bp}(5)  &  \tilde{\bp}(5)-\tilde{\bp}(1)\cr
                 (1,5;C_2) & \tilde{\bp}(1)-\tau(C_2)\tilde{\bp}(5) &  -\big(\tilde{\bp}(5)-\tau(C_2)^{-1}\tilde{\bp}(1)\big) \cr
                 (1,5;s_h) & \tilde{\bp}(1)-\tau(s_h)\tilde{\bp}(5) &  \tilde{\bp}(5)-\tau(s_h)^{-1}\tilde{\bp}(1) \cr
                 (1,5;s_v) & \tilde{\bp}(1)-\tau(s_v)\tilde{\bp}(5) &  -\big(\tilde{\bp}(5)-\tau(s_v)^{-1}\tilde{\bp}(1)\big) \cr}.
                              \end{displaymath}

The other orbit rigidity matrices $O_{\bm j}(H,\psi,\tilde{\bp})$
can be obtained analogously.

Note that the framework in Figure~\ref{k44fl}(a) has a non-trivial fully symmetric infinitesimal motion which even extends to a \emph{continuous} $\mathcal{C}_{2v}$-preserving motion \cite{BSWW,jkt}. (In the engineering literature, this motion is known as Bottema's mechanism.)  It was shown in \cite{jkt} that this framework is falsely predicted to be forced-symmetric rigid by the matroidal counts for the fully symmetric orbit rigidity matrix. Thus, the problem of finding combinatorial characterizations for forced-symmetric rigidity (and hence also for incidentally symmetric rigidity)
of ${\cal C}_{2v}$-generic frameworks  (or $\mathcal{C}_{2nv}$-generic frameworks, $n\geq 1$) remains open.






\section{Gain-sparsity and constructive characterizations}
\label{sec:gain_sparsity}
We now turn our attention to combinatorial characterizations of infinitesimally rigid symmetric frameworks in the plane.
In this section we first present some preliminary facts concerning gain graphs and
matroids on gain graphs which will be used in
the next section to derive the desired combinatorial characterizations.

\subsection{Gain-sparsity}
Let $(H,\psi)$ be a $\Gamma$-gain graph.
A cycle is called {\em balanced} if the product of its edge gains is equal to the identity.
(If $\Gamma$ is an additive group, we take the sum instead of the product.)
More precisely, a cycle of the form $\tilde{v}_1,\te_1,\tilde{v}_2,\te_2,\tilde{v}_3,\dots,\tilde{v}_k,\te_k,\tilde{v}_1$, is balanced if $\Pi_{i=1}^k \psi(\te_i)^{{\rm sign}(\te_i)}=id$,
where ${\rm sign}(\te_i)=1$ if $\te_i$ is directed from $\tilde{v}_i$ to $\tilde{v}_{i+1}$, and ${\rm sign}(\te_i)=-1$  otherwise.

We say that an edge subset $F\subseteq E(H)$ is {\em balanced} if all cycles in $F$ are balanced;
otherwise it is called {\em unbalanced}.
The following is a slightly generalized concept to the one proposed in \cite{jkt}.
\begin{definition}
Let $(H,\psi)$ be a $\Gamma$-gain graph and $k, \ell, m$ be nonnegative integers with $m\leq \ell$.
$(H,\psi)$ is called $(k,\ell,m)$-gain-sparse if
\begin{itemize}
\item $|F|\leq k|V(F)|-\ell$ for any nonempty balanced $F\subseteq E(H)$;
\item $|F|\leq k|V(F)|-m$ for any nonempty $F\subseteq E(H)$.
\end{itemize}
Similarly, an edge set $E$ is called $(k,\ell,m)$-gain-sparse if it induces a $(k,\ell,m)$-gain-sparse graph.
\end{definition}

Let ${\cal I}_{k,\ell,m}$ be a family of $(k,\ell,m)$-gain-sparse edge sets in $(H,\psi)$.
As noted in \cite{jkt}, ${\cal I}_{k,\ell,m}$ forms the family of independence sets
of a  matroid on $E(H)$ for certain $(k,\ell, m)$,
which we denote by ${\cal M}_{k,\ell,m}(H,\psi)$, or simply by ${\cal M}_{k,\ell,m}$.
Let us take a closer look at this fact.

If $(k,\ell,m)=(1,1,0)$, then ${\cal M}_{1,1,0}$ is known as the {\em frame matroid} (or {\em bias matroid}) of $(H,\psi)$, which is extensively studied in matroid theory (see, e.g., \cite{zaslavsky1991biased}).
It is known that $F\subseteq E(H)$ is independent in ${\cal M}_{1,1,0}$ if and only if each connected component of $F$ contains no cycle or just one cycle, and the cycle is unbalanced if it exists.
When $\Gamma=\{id\}$, ${\cal M}_{1,1,0}$ is equal to the {\em graphic matroid} of $H$,
where $F\subseteq E(H)$ is independent if and only if $F$ is cycle free.

If $k=\ell$, ${\cal M}_{k,k,m}$ is the union of
$m$ copies of the graphic matroid of $H$ and
$(k-m)$ copies of the frame matroid of $(H,\psi)$.
In other words, $F\subseteq E(H)$ is independent in ${\cal M}_{k,k,m}$ if and only if
$F$ can be partitioned into $k$ sets $F_1,\dots, F_k$ such that
$F_i$ is a forest for $1\leq i\leq m$ and
$F_i$ is independent in ${\cal M}_{1,1,0}$ for $m+1\leq i\leq k$.
In particular, if $|E(H)|=k|V(H)|-m$, then $E(H)$ can be partitioned into $k$ sets $E_1,\dots, E_k$ such that
$E_i$ is a spanning tree for $1\leq i\leq m$ and
$E_i$ is a spanning edge set such that each connected component contains exactly one unbalanced cycle.

If $(k,\ell,m)=(k,k+\ell',m'+\ell')$ for some $0\leq m'\geq k$ and $\ell'\geq 0$, then ${\cal M}_{k,\ell,m}$ is $\ell'$ times Dilworth truncations
of ${\cal M}_{k,k,m'}$, and it forms a matroid.
In particular, for $k=2$ and $\ell=3$, ${\cal M}_{2,3,m}$ implicitly or explicitly appeared in the study of symmetry-forced rigidity.
The generic symmetry-forced rigidity of ${\cal C}_s$-symmetric frameworks or ${\cal C}_k$-symmetric frameworks is characterized by
the $(2,3,1)$-gain-sparsity of the underlying quotient gain graphs \cite{MT3,MT2,MT1,LT,jkt}.
We shall extend this result in Section~\ref{sec:characterization}.
For infinite periodic graphs, it was proved by Ross that the $(2,3,2)$-gain-sparsity of $\mathbb{Z}^2$-gain graphs characterizes the symmetry-forced rigidity of
periodic frameworks on a fixed lattice \cite{ER}.

For other triples $(k,\ell,m)$ very little properties are known for $(k,\ell,m)$-gain-sparse graphs.
Csaba Kiraly recently pointed out that ${\cal M}_{2,3,0}$ is not a matroid in general.
A number of different (or generalized) sparsity conditions of gain graphs are also discussed in \cite{MT1,MT3,jkt,tan}.

\subsection{Constructive characterizations of $(2,3,m)$-gain-sparse graphs}
In this subsection we will review a constructive characterization of $(2,3,m)$-gain-sparse graphs given in \cite{jkt}.
We define three operations,
called {\it extensions}, that preserve $(2,3,m)$-gain-sparsity.
The first two operations
generalize the well-known Henneberg operations~\cite{W1} to gain graphs.

Let $(H,\psi)$ be a $\Gamma$-gain graph.
The \emph{0-extension} adds a new vertex \(\tilde{v}\) and two new non-loop edges
$\te_{1}$ and $\te_{2}$ to \(H\)
such that the new edges are incident to \(\tilde{v}\) and the other end-vertices are
two not necessarily distinct vertices of \(V(H)\).
If $\te_{1}$ and $\te_{2}$ are not parallel, then their labels can be arbitrary.
Otherwise the labels are assigned such that \(\psi(\te_{1})\neq \psi(\te_{2})\),
assuming that $\te_1$ and $\te_2$ are directed to $\tilde{v}$ (see Fig.\ref{fig:inductive} (a)).

The \emph{1-extension} (see Fig.\ref{fig:inductive} (b)) first chooses an edge \(\te\)  and a vertex \(\tilde{z}\),
where $\te$ may be a loop and $\tilde{z}$ may be an end-vertex of $\te$.
It subdivides \(\te\), with a new vertex \(\tilde{v}\) and new edges \(\te_{1},\te_{2}\),
such that the tail of \(\te_1\) is the tail of \(\te\) and the tail of \(\te_2\) is the head of \(\te\).
The labels of the new edges are assigned such that \(\psi(\te_{1})\cdot \psi(\te_{2})^{-1}=\psi(\te)\).
The 1-extension also adds a third edge \(\te_{3}\) oriented from $\tilde{z}$ to \(\tilde{v}\).
The label of $\te_3$ is assigned so that it is {\em locally unbalanced},
i.e., every two-cycle $\te_i\te_j$, if it exists,  is unbalanced.

The \emph{loop 1-extension} (see Fig.\ref{fig:inductive} (c)). adds a new vertex \(\tilde{v}\) to \(H\)
and connects it to a vertex \(\tilde{z}\in V(H)\) by a new edge with any label.
It also adds a new loop \(\tilde{l}\) incident to \(\tilde{v}\) with \(\psi(\tilde{l})\neq id\).

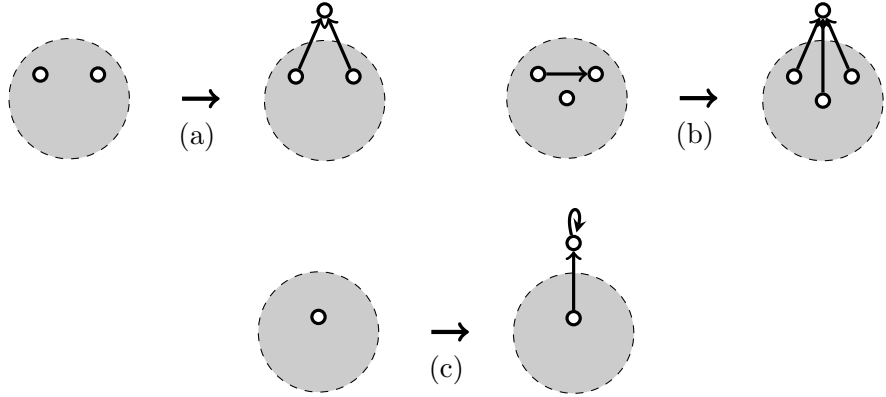
\begin{figure}[htp]
\begin{center}
\begin{tikzpicture}[very thick,scale=1]
\tikzstyle{every node}=[circle, draw=black, fill=white, inner sep=0pt, minimum width=5pt];
\filldraw[fill=black!20!white, draw=black, thin, dashed](0,0)circle(0.8cm);
\node (p1) at (40:0.5cm) {};
\node (p2) at (140:0.5cm) {};
  \draw[ultra thick, ->](1.5,0)--(2,0);
   \node [draw=white, fill=white] (a) at (1.7,-0.5)  {(a)};
\end{tikzpicture}
\hspace{0.3cm}
\begin{tikzpicture}[very thick,scale=1]
\tikzstyle{every node}=[circle, draw=black, fill=white, inner sep=0pt, minimum width=5pt];
\filldraw[fill=black!20!white, draw=black, thin, dashed](0,0)circle(0.8cm);
\node (p1) at (40:0.5cm) {};
\node (p2) at (140:0.5cm) {};
\node (p3) at (90:1.2cm) {};
\draw [<-](p3)--(p1);
\draw [<-](p3)--(p2);
\end{tikzpicture}
\hspace{1.35cm}
\begin{tikzpicture}[very thick,scale=1]
\tikzstyle{every node}=[circle, draw=black, fill=white, inner sep=0pt, minimum width=5pt];
\filldraw[fill=black!20!white, draw=black, thin, dashed](0,0)circle(0.8cm);
\node (p1) at (40:0.5cm) {};
\node (p2) at (140:0.5cm) {};
\node (p4) at (90:0cm) {};
 \draw [<-](p1)--(p2);
\draw[ultra thick, ->](1.5,0)--(2,0);
   \node [draw=white, fill=white] (a) at (1.7,-0.5)  {(b)};
\end{tikzpicture}
\hspace{0.3cm}
\begin{tikzpicture}[very thick,scale=1]
\tikzstyle{every node}=[circle, draw=black, fill=white, inner sep=0pt, minimum width=5pt];
\filldraw[fill=black!20!white, draw=black, thin, dashed](0,0)circle(0.8cm);
\node (p1) at (40:0.5cm) {};
\node (p2) at (140:0.5cm) {};
\node (p3) at (90:1.2cm) {};
\node (p4) at (90:0cm) {};
\draw [<-](p3)--(p1);
\draw [<-](p3)--(p2);
\draw [<-](p3)--(p4);
\end{tikzpicture}
\vspace{0.5cm}

\begin{tikzpicture}[very thick,scale=1]
\tikzstyle{every node}=[circle, draw=black, fill=white, inner sep=0pt, minimum width=5pt];
\filldraw[fill=black!20!white, draw=black, thin, dashed](0,0)circle(0.8cm);
\node (p4) at (90:0.2cm) {};
\draw[ultra thick, ->](1.5,0)--(2,0);
   \node [draw=white, fill=white] (a) at (1.7,-0.5)  {(c)};
\end{tikzpicture}
\hspace{0.3cm}   \begin{tikzpicture}[very thick,scale=1]
\tikzstyle{every node}=[circle, draw=black, fill=white, inner sep=0pt, minimum width=5pt];
\filldraw[fill=black!20!white, draw=black, thin, dashed](0,0)circle(0.8cm);
\node (p3) at (90:1.2cm) {};
\node (p4) at (90:0.2cm) {};
\draw [<-](p3)--(p4);
\path
(p3) edge [loop above,->, >=stealth,shorten >=1pt,looseness=20] (p3);
\end{tikzpicture}
\vspace{-0.3cm}
\end{center}
\vspace{-0.2cm}
\caption{(a) 0-extension, where the  new edges may be parallel.
(b) 1-extension, where the removed edge may be a loop and the
new edges may be parallel.
(c) loop-1-extension.}
\label{fig:inductive}
\end{figure}


\begin{thm}[Jord{\'a}n et al.~\cite{jkt}]
\label{thm:construction}
Let $m\in \{1,2\}$ and let $(H,\psi)$ be a $\Gamma$-gain graph with $|E(H)|=2|V(H)|-m$.
Then $(H,\psi)$ is $(2,3,m)$-gain-sparse if and only if it can be built up
from a $\Gamma$-gain graph with one vertex without any edge if $m=2$ and with
an unbalanced loop if $m=1$ by a sequence of 0-extensions, 1-extensions, and loop-1-extensions.
\end{thm}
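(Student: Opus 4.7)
The plan is to prove the two directions separately. For sufficiency, I would verify that each of the three extensions preserves the count $|E(H)| = 2|V(H)| - m$ and preserves $(2,3,m)$-gain-sparsity. The count is immediate: a $0$-extension adds one vertex and two edges, a $1$-extension adds one vertex and (after removing one edge and adding three) two net edges, and a loop-$1$-extension adds one vertex and two edges (one ordinary edge and one loop). For sparsity preservation, I would take any subgraph $F'$ of the enlarged graph containing the new vertex $\tilde{v}$ and let $F := F' - \tilde{v}$; the contribution of $\tilde{v}$ to the target counts is $2$, matching the at most two edges of $F'$ incident to $\tilde{v}$. For the stricter balanced count, the label conditions are exactly what is required: the relation $\psi(\te_1)\psi(\te_2)^{-1}=\psi(\te)$ in the $1$-extension forces balanced cycles through $\tilde{v}$ in the new graph to correspond bijectively to balanced cycles through $\te$ in the old graph; the local unbalancedness of any $2$-cycle at $\tilde{v}$ rules out spurious balanced digons; and the unbalanced loop in a loop-$1$-extension prevents any new balanced cycle through $\tilde{v}$ altogether.

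For necessity, I would induct on $|V(H)|$. The base case is forced by the count $|E|=2|V|-m$ together with the two sparsity bounds: the only $(2,3,2)$-gain-sparse graph on a single vertex is the edgeless one, and the only $(2,3,1)$-gain-sparse graph on a single vertex is a single unbalanced loop. For the inductive step, the count gives average degree strictly less than $4$, hence there is a vertex $\tilde{v}$ of degree $2$ or $3$. A degree-$2$ vertex $\tilde{v}$ with two non-loop incident edges is reducible via the inverse of a $0$-extension (delete $\tilde{v}$); sparsity is preserved in the reduced graph simply because two edges are removed while $2|V|-m$ drops by $2$, and no subgraph of the reduction is more violating than a corresponding subgraph of $(H,\psi)$. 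A degree-$2$ vertex incident to a loop must have that loop unbalanced (by the general bound) and is reducible via the inverse of a loop-$1$-extension by the same token.

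The main obstacle is the $1$-extension reverse applied to a vertex $\tilde{v}$ of degree $3$. Writing its three incident edges as $\te_1,\te_2,\te_3$, the inverse chooses an ordered pair $(\te_i,\te_j)$, deletes $\tilde{v}$, and inserts a single edge between the other endpoints of $\te_i$ and $\te_j$ carrying the label $\psi(\te_i)\psi(\te_j)^{-1}$; we must show that \emph{some} choice produces a $(2,3,m)$-gain-sparse graph with $|E|=2|V|-m$. I would argue by contradiction: if every one of the three candidate reductions fails, each failure exhibits a tight edge set in the reduced graph which together with the reinserted edge violates either the balanced or the general sparsity bound. Using submodularity of the $(2,3,m)$-count functions (together with the fact that the balanced-closure operator gives a matroid related to the frame matroid structure of ${\cal M}_{2,3,m}$), the three tight witnesses can be combined to produce a tight set in the original graph containing $\tilde{v}$ that already contradicts the assumed independence. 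The bookkeeping splits into sub-cases according to whether the offending witnesses are balanced or unbalanced, whether $m=1$ or $m=2$, and the local structure at $\tilde{v}$ (distinct versus repeated neighbours, parallel edges, loops at $\tilde{z}$). This submodular exchange, which mirrors the classical Henneberg reduction for the $2$-dimensional generic rigidity matroid but with the additional balance bookkeeping imposed by $\psi$, is the technical heart of the argument and is carried out in detail in~\cite{jkt}.
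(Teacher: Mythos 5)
The paper does not actually give a proof of this theorem: it simply notes that the case $m=1$ is \cite[Theorem~4.4]{jkt} and asserts that ``exactly the same proof can be applied in the case of $m=2$,'' further citing \cite{BS3,ER} for special cases. Your sketch is therefore more detailed than the paper itself, and the approach you describe (Henneberg-style induction with a $0$-/$1$-/loop-$1$-extension reduction at a vertex of small degree, with a submodular exchange argument for the degree-$3$ case) is indeed what \cite{jkt} does. However, two points deserve attention.

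First, there is a concrete error in your sufficiency argument. After a $1$-extension the new vertex $\tilde{v}$ has \emph{three} incident edges $\te_1,\te_2,\te_3$, not ``at most two.'' If a subgraph $F'$ of $H'$ contains $\tilde{v}$ together with all three of these edges, deleting $\tilde{v}$ drops $2|V|-m$ by $2$ but $|E|$ by $3$, so the naive comparison is off by one. The correct map is to replace the pair $\{\te_1,\te_2\}$ by the original subdivided edge $\te$ (whose gain is $\psi(\te_1)\psi(\te_2)^{-1}$), which yields a subgraph $F$ of the old graph $H$ with $|V(F)|=|V(F')|-1$ and $|E(F)|=|E(F')|-2$; sparsity of $F$ in $H$ then controls $F'$. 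The balance bookkeeping you describe is exactly what one needs for the stricter bound along this correspondence, so the fix is local, but the statement ``at most two edges of $F'$ incident to $\tilde{v}$'' is wrong as written.

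Second, in the necessity direction you should also record that the count $|E(H)|=2|V(H)|-m$ together with $(2,3,m)$-gain-sparsity forces minimum degree at least $2$ (deleting a vertex of degree $\le 1$ would already violate the count on the remaining graph), since the average-degree bound alone does not rule out low-degree vertices. Similarly, your phrase ``degree-$2$ vertex incident to a loop'' is misleading: with the standard convention a loop contributes $2$ to the degree, so the loop-$1$-extension reversal is applied to a vertex that is incident to exactly one loop and one non-loop edge (i.e.\ degree $3$). These are small bookkeeping matters. The genuinely hard part --- showing that some $1$-reduction at a degree-$3$ vertex succeeds, via submodular combination of the tight witness sets when all three fail --- you correctly identify as the technical core but, like the paper, you defer it entirely to \cite{jkt}; so your proposal, like the paper's discussion, does not itself constitute a full proof of the statement.
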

The theorem is proved for $m=1$ in \cite[Theorem 4.4]{jkt}, and exactly the same proof can be applied in  the case of $m=2$.
For special cases, Theorem~\ref{thm:construction} was proved
by Schulze \cite{BS3} and Ross \cite{ER}.

In the covering graph these operations can be seen as
graph operations that preserve the underlying symmetry.
Some of them can be recognized as
performing standard - non-symmetric - Henneberg operations~\cite{W1}
simultaneously \cite{jkt}.

\subsection{Subgroups induced by edge sets}
We have introduced the {\em balancedness} of an edge set in $(H,\psi)$
in order to define gain-sparsity matroids on $E(H)$.
However, we sometimes need to extract more information on the underlying group from $(H,\psi)$.
Such information is represented as {\em subgroups induced by edge sets}, which we are about to introduce.
For simplicity, we will assume that $\Gamma$ is Abelian. (See \cite{jkt} for the general treatment.)
%

Recall that for a cycle $C$ of the form $\tilde{v}_1,\tilde{e}_1,\tilde{v}_2,\dots, \tilde{e}_k,\tilde{v}_1$
in $(H,\psi)$,
the gain $\psi(C)$ of $C$ is $\psi(C)=\Pi_{i=1}^k\psi(\tilde{e}_i)^{{\rm sign}(\tilde{e}_i)}$.
For $F\subseteq E(H)$, define $\langle F\rangle$ to be
the subgroup of $\Gamma$ generated by the elements
in the set $\{\psi(C)|\,\textrm{  $C$ is a cycle in the subgraph induced by }  F \}$.
Note that $F$ is balanced if and only if $\langle F\rangle$ is trivial.

A {\em  switching} at a vertex $\tilde{v}$ with $\gamma\in \Gamma$ is an operation that constructs
a new labeling $\psi':E(H)\rightarrow \Gamma$ from $\psi$ by setting
\begin{equation*}
\psi'(\tilde{e})=\begin{cases}
\gamma \psi(\tilde{e}) & \text{if $\tilde{e}$ is directed to $\tilde{v}$} \\
\psi(\tilde{e})\gamma^{-1} & \text{if $\tilde{e}$ is directed from $\tilde{v}$} \\
\psi(\tilde{e}) & \text{otherwise}
\end{cases}
\end{equation*}
We say that $\psi'$ is {\em equivalent} to $\psi$ if $\psi'$ can be obtained from $\psi$
by a sequence of switchings.
Then it can easily be checked that for any $F\subseteq E(H)$,
$\langle F\rangle$ is invariant up to equivalence (see, e.g., \cite[Proposition 2.2]{jkt} for the proof).

In the proof of \cite[Lemma 5.2]{jkt}, it was shown that
the rank of fully-symmetric orbit rigidity matrices
(i.e., the case when $\rho_{\bm j}$ is trivial) is invariant up to equivalence.
Exactly the same proof can be applied to show the following.
\begin{proposition}
\label{prop:switching}
Let $(H,\psi)$ be a $\Gamma$-gain graph with Abelian group $\Gamma$, let $\tilde{\bp}:V(H)\rightarrow \mathbb{R}^d$
be $\Gamma$-generic,
and let $\psi'$ be a gain function equivalent to $\psi$.
Then
${\rm rank}\ O_{\bm j}(H,\psi,\tilde{\bp})={\rm rank}\ O_{\bm j}(H,\psi',\tilde{\bp})$.
\end{proposition}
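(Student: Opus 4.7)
Since equivalence of gain functions is generated by single switchings, it suffices to treat the case where $\psi'$ is obtained from $\psi$ by one switching at a vertex $\tv\in V(H)$ with an element $\gamma\in\Gamma$. The plan is to construct a linear isomorphism between $\ker O_{\bm j}(H,\psi,\tilde{\bp})$ and $\ker O_{\bm j}(H,\psi',\tilde{\bp}')$ for an auxiliary configuration $\tilde{\bp}'$, and then invoke constancy of the rank on $\Gamma$-generic configurations to swap $\tilde{\bp}'$ for $\tilde{\bp}$.

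Define $\tilde{\bp}':V(H)\to\mathbb{R}^d$ by $\tilde{\bp}'(\tv)=\tau(\gamma)^{-1}\tilde{\bp}(\tv)$ and $\tilde{\bp}'(\tilde w)=\tilde{\bp}(\tilde w)$ for $\tilde w\neq \tv$, and define a linear map $\Phi$ on motions by $\Phi(\tilde{\bmm})(\tv)=\rho_{\bm j}(\gamma)^{-1}\tau(\gamma)^{-1}\tilde{\bmm}(\tv)$ and $\Phi(\tilde{\bmm})(\tilde w)=\tilde{\bmm}(\tilde w)$ otherwise. Since $\tau(\gamma)$ and $\rho_{\bm j}(\gamma)$ are invertible, $\Phi$ is a linear isomorphism. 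The claim is that for every $\te\in E(H)$, the $\te$-th row equation of $O_{\bm j}(H,\psi',\tilde{\bp}')\Phi(\tilde{\bmm})=0$ is a nonzero scalar multiple of the $\te$-th row equation of $O_{\bm j}(H,\psi,\tilde{\bp})\tilde{\bmm}=0$. The verification is a direct computation in four cases (edges into $\tv$, edges out of $\tv$, loops at $\tv$, and edges disjoint from $\tv$) and uses three ingredients: the Abelian hypothesis, which gives $\tau(\gamma)\tau(\psi_{\te})=\tau(\psi_{\te})\tau(\gamma)$ and in particular fixes the gain of every loop at $\tv$ under switching; the orthogonality of $\tau(\gamma)$, which gives $\langle \tau(\gamma)^{-1}x,\tau(\gamma)^{-1}y\rangle=\langle x,y\rangle$; and the fact that $\rho_{\bm j}(\gamma)\in\mathbb{C}\setminus\{0\}$ is a scalar and so commutes with everything. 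Together these yield $\ker O_{\bm j}(H,\psi,\tilde{\bp}) \cong \ker O_{\bm j}(H,\psi',\tilde{\bp}')$ under $\Phi$, and since both matrices have $d|V(H)|$ columns, rank-nullity gives $\mathrm{rank}\, O_{\bm j}(H,\psi,\tilde{\bp})=\mathrm{rank}\, O_{\bm j}(H,\psi',\tilde{\bp}')$.

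For the final step, note that $\tau(\gamma)$ has entries in $\mathbb{Q}_\Gamma$, so $\tilde{\bp}\mapsto\tilde{\bp}'$ is an invertible $\mathbb{Q}_\Gamma$-linear transformation on the coordinates at $\tv$; hence $\tilde{\bp}'$ is $\Gamma$-generic whenever $\tilde{\bp}$ is. Because the entries of $O_{\bm j}(H,\psi',\cdot)$ are polynomials in the vertex coordinates with coefficients in $\mathbb{Q}_\Gamma$, the locus where the rank drops below its maximum is a proper algebraic subset defined over $\mathbb{Q}_\Gamma$, and no $\Gamma$-generic configuration lies in it. Therefore $\mathrm{rank}\, O_{\bm j}(H,\psi',\tilde{\bp})=\mathrm{rank}\, O_{\bm j}(H,\psi',\tilde{\bp}')=\mathrm{rank}\, O_{\bm j}(H,\psi,\tilde{\bp})$. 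The step I expect to be most fiddly is the four-case edge-by-edge verification: the scalar relating the new and old row equations is $1$ for in-edges but $\rho_{\bm j}(\gamma)^{-1}$ for out-edges and loops, and the definition of $\Phi$ must be calibrated so that both situations are handled by the same motion transformation. This is precisely the role of the extra factor $\rho_{\bm j}(\gamma)^{-1}$ in $\Phi(\tilde{\bmm})(\tv)$, which is invisible in the fully-symmetric case ($\rho_{\bm j}$ trivial) treated in Lemma~5.2 of \cite{jkt} but essential here.
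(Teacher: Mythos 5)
Your proof is correct, and it is essentially the argument the paper invokes: the paper defers to \cite[Lemma~5.2]{jkt}, which proves the $\rho_0$-case by pairing the switching at $\tv$ with the configuration change $\tilde{\bp}(\tv)\mapsto\tau(\gamma)^{-1}\tilde{\bp}(\tv)$ and then transferring back by genericity; you have reproduced exactly that structure, and correctly identified that the only modification needed for general $\rho_{\bm j}$ is the extra scalar $\rho_{\bm j}(\gamma)^{-1}$ in the motion transformation $\Phi$. One small point worth being careful about in the final genericity transfer: the entries of $O_{\bm j}(H,\psi',\cdot)$ have coefficients not in $\mathbb{Q}_\Gamma$ itself but in $\mathbb{Q}_\Gamma$ adjoined the (complex, algebraic) values of $\rho_{\bm j}$; since this is an algebraic extension of $\mathbb{Q}_\Gamma$, algebraic independence over $\mathbb{Q}_\Gamma$ still implies algebraic independence over that larger field, so the conclusion stands, but it deserves a word.
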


The following proposition is very useful to compute $\langle F\rangle$.
\begin{proposition}
\label{prop:induced_subgroup}
Let $(H,\psi)$ be a $\Gamma$-gain graph with Abelian $\Gamma$.
\begin{itemize}
\item For any forest $T$ in $E(H)$, there exists a $\psi'$ equivalent to $\psi$ such that
$\psi'(\tilde{e})=id$ for every $\tilde{e}\in T$.
\item For any $F\subseteq E(H)$ and a maximal forest $T$ in $F$, if $\psi(\tilde{e})=id$ holds for every $\tilde{e}\in T$,
then $\langle F\rangle$ is the subgroup generated by $\{\psi(\tilde{e})|\, \tilde{e}\in  F\setminus T\}$.
\end{itemize}
\end{proposition}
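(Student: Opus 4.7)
The plan is to prove the two parts in order, the first by a leaf-stripping (equivalently, rooted BFS) induction on the forest and the second via the standard fundamental-cycle basis of the cycle space, exploiting that $\Gamma$ is Abelian.

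For the first part I would induct on $|E(T)|$. The base case $E(T)=\emptyset$ is trivial. For the step, pick any leaf vertex $\tilde v$ of $T$, i.e.\ a vertex incident to exactly one edge $\tilde e$ of $T$ (every non-empty forest has one). Let $\gamma=\psi(\tilde e)$ if $\tilde e$ is directed towards $\tilde v$, and $\gamma=\psi(\tilde e)^{-1}$ otherwise. A single switching at $\tilde v$ with $\gamma$ sends $\psi(\tilde e)$ to $id$. Now $T':=T\setminus\{\tilde e\}$ is a forest with one fewer edge and $\tilde v\notin V(T')$, so by the inductive hypothesis we obtain a sequence of switchings, applied only at vertices of $V(T')$, which normalizes all edges of $T'$. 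Since none of these switchings are performed at $\tilde v$, the edge $\tilde e$ keeps its gain $id$, and we are done. (One could equally well root every tree component of $T$ and normalize the parent edge of each vertex in BFS order; the key point is that normalizing a child edge does not alter ancestor edges.)

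For the second part, let $T$ be a maximal forest in $F$ with $\psi|_T\equiv id$. For each $\tilde e\in F\setminus T$, the endpoints of $\tilde e$ lie in the same component of $T$ (by maximality), so $T\cup\{\tilde e\}$ contains a unique cycle $C_{\tilde e}$. Since every edge of $T$ has gain $id$, we get $\psi(C_{\tilde e})=\psi(\tilde e)^{\pm 1}$, and in particular $\psi(\tilde e)\in\langle F\rangle$, showing one inclusion. For the other inclusion, let $C$ be any cycle in $F$. Using the standard fact that the fundamental cycles $\{C_{\tilde e}:\tilde e\in F\setminus T\}$ form a $\mathbb{Z}$-basis of the cycle space of the graph underlying $F$, we may write $C$ as a signed sum $C=\sum_{\tilde e\in F\setminus T}\lambda_{\tilde e}\,C_{\tilde e}$ with $\lambda_{\tilde e}\in\{-1,0,1\}$ (picking up the sign according to whether $\tilde e$ traverses $C$ in or against its reference orientation). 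Because $\Gamma$ is Abelian, taking gains is a homomorphism from the cycle space to $\Gamma$, so
\[
\psi(C)=\prod_{\tilde e\in F\setminus T}\psi(C_{\tilde e})^{\lambda_{\tilde e}}=\prod_{\tilde e\in F\setminus T}\psi(\tilde e)^{\pm\lambda_{\tilde e}},
\]
which lies in the subgroup generated by $\{\psi(\tilde e):\tilde e\in F\setminus T\}$. Hence $\langle F\rangle$ is exactly this subgroup.

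The only real subtlety will be the bookkeeping of orientations: for a fixed reference orientation of the cycle $C$, one has to verify that each traversal of $\tilde e$ inside $C_{\tilde e}$ and inside $C$ contributes consistent signs, so that the cycle-space identity above genuinely translates into the stated multiplicative identity in $\Gamma$. This is where Abelianness is essential and is the one place where I would double-check the signs carefully; everything else follows the standard spanning-tree/fundamental-cycle template.
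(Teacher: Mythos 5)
Your second bullet is correct but works harder than necessary. Once every tree edge carries gain $id$ and $\Gamma$ is Abelian, for \emph{any} cycle $C$ contained in $F$ the quantity $\psi(C)=\prod_i \psi(\tilde e_i)^{\pm 1}$ can be rearranged directly, the $T$-factors dropped (they are $id$), and what remains is $\prod_{\tilde e\in E(C)\cap(F\setminus T)}\psi(\tilde e)^{\pm 1}$. This gives both inclusions at once without invoking the fundamental-cycle basis or a homomorphism out of the integer cycle space; the extra machinery is sound, but the one-line rearrangement is shorter and makes the role of Abelianness completely transparent.

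Your first bullet, however, has a genuine ordering error that the BFS parenthetical silently glosses over, because the two orders are not ``equivalent'' as you claim---they are opposite, and only one of them works. As written, you first switch at the leaf $\tilde v$ to force $\psi(\tilde e)=id$ and \emph{then} run the inductive sequence of switchings that normalizes $T'=T\setminus\{\tilde e\}$. But those later switchings may be performed at $\tilde u$, the \emph{other} endpoint of $\tilde e$ (which typically does lie in $V(T')$), and a switching at $\tilde u$ changes $\psi(\tilde e)$: switching at a vertex affects every edge incident to that vertex, not only edges whose leaf endpoint was switched. So the sentence ``since none of these switchings are performed at $\tilde v$, the edge $\tilde e$ keeps its gain $id$'' is simply false. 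The repair is to reverse the order: normalize $T'$ by induction first, and only afterwards perform the single switching at $\tilde v$; because $\tilde v$ is a leaf of $T$, no edge of $T'$ is incident to $\tilde v$, so this last switching leaves $T'$ untouched (and then you do not even need to strengthen the induction hypothesis to control which vertices are switched). This is exactly the top-down, root-to-leaf processing you mention in parentheses. A minor additional slip: with the paper's convention for switching, to kill $\psi(\tilde e)$ by switching at $\tilde v$ you should take $\gamma=\psi(\tilde e)^{-1}$ when $\tilde e$ is directed \emph{to} $\tilde v$, and $\gamma=\psi(\tilde e)$ when it is directed \emph{from} $\tilde v$, not the other way around.
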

The proof is given in \cite[Proposition 2.3, Lemma 2.4]{jkt}.

\section{Combinatorial characterizations for bar-joint frameworks in the plane}
\label{sec:characterization}
Based on the theory of block-diagonalizations of rigidity matrices,
in this section we present combinatorial characterizations of infinitesimally rigid frameworks which are generic modulo cyclic symmetry in the plane.
By (\ref{rigblocks}) and
Proposition~\ref{prop:abelian} our task of computing the rank of the rigidity matrix is reduced to computing the rank of each orbit rigidity matrix.

Recall that each orbit rigidity matrix is defined for any $\Gamma$-gain graph $(H,\psi)$ with $\tilde{\bp}:V(H)\rightarrow \mathbb{R}^d$, and
its rows define a matroid on the edge set of $H$.
We will show that  when $\tilde{\bp}$ is $\tau(\Gamma)$-regular,
this {\em orbit-rigidity matroid} is isomorphic to the $(2,3,m)$-gain-sparsity matroid of $(H,\psi)$
given in Section~\ref{sec:gain_sparsity}
if the underlying symmetry is ${\cal C}_s, {\cal C}_2$ or ${\cal C}_3$.
If the underlying symmetry is ${\cal C}_k$ for $k\geq 4$, then it turns out that orbit-rigidity matroids have more complicated combinatorial structures and
the problem of characterizing them is still unsolved.
However, we will present some non-trivial necessary conditions in the last subsection.

The following lemma implies that the row independence of an orbit rigidity matrix
is preserved by the three operations given in
Section~\ref{sec:gain_sparsity}.
\begin{lemma}
\label{lem:extension}
Let $\Gamma$ be an Abelian group of order $k$ and let
$\tau:\Gamma\rightarrow O(\mathbb{R}^2)$ be a faithful orthogonal representation.
Let $(H',\psi')$ be a $\Gamma$-gain graph obtained from $(H,\psi)$ by a 0-extension, 1-extension, or
loop-1-extension.
If there is a mapping $\tilde{\bp}:V(H)\rightarrow \mathbb{R}^2$ such that
$O_{\bm j}(H,\psi,\tilde{\bp})$ is row independent, then there is $\tilde{\bp}':V(H')\rightarrow \mathbb{R}^2$
such that $O_{\bm j}(H',\psi,\tilde{\bp}')$ is row independent, unless
the new loop is a zero loop in the case of a loop-1-extension.
\end{lemma}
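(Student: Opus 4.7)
The plan is to treat the three extension operations case by case, exploiting the fact that row-independence of $O_{\bm j}(H',\psi',\tilde{\bp}')$ is a Zariski-open condition on $\tilde{\bp}'$: it suffices to exhibit a single $\tilde{\bp}'$ witnessing independence. Throughout, I start from the given $\tilde{\bp}$ (replaced by a small generic perturbation if necessary, which preserves the hypothesis by openness) and extend it by choosing $\tilde{\bp}'(\tilde{v})$. The rows $r_e$ of the old matrix vanish on the two columns indexed by $\tilde{v}$, so no combination of them can cancel a nonzero $\tilde{v}$-contribution from the new rows. For the \emph{0-extension} with new edges $\te_i=(\tilde{u}_i,\tilde{v})$ of gain $\psi_i$, the $\tilde{v}$-entries of the new rows are proportional to $\tilde{\bp}'(\tilde{v})-\tau(\psi_i)^{-1}\tilde{\bp}(\tilde{u}_i)$ for $i=1,2$. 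These two vectors in $\mathbb{R}^2$ are linearly independent as long as $\tilde{\bp}'(\tilde{v})$ lies off the line through the points $q_i:=\tau(\psi_i)^{-1}\tilde{\bp}(\tilde{u}_i)$; the $q_i$ are distinct because either $\tilde{u}_1\neq\tilde{u}_2$ (generic $\tilde{\bp}$) or $\psi_1\neq\psi_2$ together with the faithfulness of $\tau$. A choice of $\tilde{\bp}'(\tilde{v})$ off this line yields the desired row-independence.

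For the \emph{loop-1-extension} with new edge $(\tilde{z},\tilde{v})$ of gain $\psi_1$ and loop $\tilde{l}$ at $\tilde{v}$ of gain $\psi_l$, the $\tilde{v}$-entry of $r'_{\tilde{l}}$ is $L\tilde{\bp}'(\tilde{v})$, where $L=(I-\tau(\psi_l))+\rho_{\bm j}(\psi_l)(I-\tau(\psi_l)^{-1})$. By Proposition~\ref{lem:zero_loop}, the hypothesis that $\tilde{l}$ is not a zero loop is exactly $L\neq 0$, so $L\tilde{\bp}'(\tilde{v})$ is nonzero for generic $\tilde{\bp}'(\tilde{v})$, and a further generic choice makes it non-parallel to the $\tilde{v}$-entry $\rho_{\bm j}(\psi_1)(\tilde{\bp}'(\tilde{v})-\tau(\psi_1)^{-1}\tilde{\bp}(\tilde{z}))$ of $r'_{(\tilde{z},\tilde{v})}$. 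The two new rows are then linearly independent at the $\tilde{v}$ columns, and combined with the independence of the old rows this gives row-independence of the enlarged matrix.

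The \emph{1-extension} is the subtle case and the main obstacle. Write $\te=(\tilde{x},\tilde{y})$ with gain $\psi_{\te}$, and the new edges as $\te_1=(\tilde{x},\tilde{v})$, $\te_2=(\tilde{y},\tilde{v})$, $\te_3=(\tilde{z},\tilde{v})$ with gains $\psi_1,\psi_2,\psi_3$ satisfying $\psi_1\psi_2^{-1}=\psi_{\te}$. I plan to specialize $\tilde{\bp}'(\tilde{v})$ to $(1-t)\tau(\psi_1)^{-1}\tilde{\bp}(\tilde{x})+t\tau(\psi_2)^{-1}\tilde{\bp}(\tilde{y})$ for a generic scalar $t\in(0,1)$. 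Using $\tau(\psi_1)\tau(\psi_2)^{-1}=\tau(\psi_{\te})$ and $\rho_{\bm j}(\psi_1)=\rho_{\bm j}(\psi_2)\rho_{\bm j}(\psi_{\te})$, a direct computation establishes the key identity
\begin{equation}
\label{eq:key1}
\rho_{\bm j}(\psi_2)(1-t)\,r'_{\te_1}+\rho_{\bm j}(\psi_1)\,t\,r'_{\te_2}=t(1-t)\rho_{\bm j}(\psi_2)\,\iota(r_{\te}),
\end{equation}
where $\iota$ pads a row supported on $V(H)$ with zeros on the $\tilde{v}$ columns. At the specialized position, the $\tilde{v}$-entries of $r'_{\te_1}$ and $r'_{\te_2}$ become parallel, while the $\tilde{v}$-entry of $r'_{\te_3}$ remains transverse to them for generic $\tilde{\bp}(\tilde{z})$ and $t$; the locally-unbalanced condition on $\psi_3$ ensures $r'_{\te_3}|_{\tilde{v}}$ is nonzero even when $\tilde{z}\in\{\tilde{x},\tilde{y}\}$. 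Hence the kernel of the $2\times 3$ system on the $\tilde{v}$ columns is one-dimensional, spanned by the single vector $(\rho_{\bm j}(\psi_2)(1-t),\rho_{\bm j}(\psi_1)t,0)$. In any hypothetical dependence $\sum_i c_{\te_i}r'_{\te_i}+\sum_{e\neq\te}c_e r_e=0$, the coefficient triple $(c_{\te_1},c_{\te_2},c_{\te_3})$ must lie in this kernel; if it is nonzero, then by~(\ref{eq:key1}) the dependence reduces to $\lambda\,\iota(r_{\te})+\sum_{e\neq\te}c_e r_e=0$ for some $\lambda\neq 0$, contradicting the row-independence of $O_{\bm j}(H,\psi,\tilde{\bp})$ (which has $r_{\te}$ among its rows). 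Thus all $c_{\te_i}=0$, and then the hypothesis forces the remaining $c_e=0$. The crux of the argument is deriving~(\ref{eq:key1}) from the gain relation $\psi_1\psi_2^{-1}=\psi_{\te}$ together with $\tau$ being a group homomorphism, and locating the correct gain-adjusted specialization of $\tilde{\bp}'(\tilde{v})$ that makes the two new non-loop rows compatible with the old row $r_{\te}$.
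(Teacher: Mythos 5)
Your proof is correct and follows essentially the same approach as the paper: for the $1$-extension, placing $\tilde{\bp}'(\tilde{v})$ on the segment between $\tau(\psi_1)^{-1}\tilde{\bp}(\tilde{x})$ and $\tau(\psi_2)^{-1}\tilde{\bp}(\tilde{y})$ so that a suitable $\rho_{\bm j}$-weighted combination of the two new non-$\te_3$ rows reproduces the deleted row $r_{\te}$, while the third row's $\tilde{v}$-entry stays transverse by non-collinearity of the three transformed base points. Your explicit identity (eq. (1)) is exactly what the paper's row reduction accomplishes, and you defer the non-collinearity/genericity details in the same way the paper does by citing the analogue of \cite[Lemma 6.1]{jkt}; the only cosmetic imprecision is that the non-collinearity is a condition on $\tilde{\bp}$ alone, and varying $t$ does not help establish it.
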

\begin{proof}
The proof is basically the same as the one given in \cite[Lemma 6.1]{jkt} for symmetry-forced rigidity.
Due to the definition of genericity, we may assume that $\tilde{\bp}$ is $\Gamma$-generic.
Then it is easy to prove the statement for a $0$-extension and a loop-$1$-extension
(see the proof of \cite[Lemma 6.1]{jkt} for a formal proof).
We therefore focus on the case, where $H'$ is obtained from $H$ by a 1-extension. This is the only nontrivial case.

Suppose that $H'$ is obtained from $H$ by a 1-extension
which removes an existing edge $\te$ and adds a new vertex $\tilde{v}$
with three new non-loop edges $\te_1, \te_2, \te_3$ incident to $\tilde{v}$.
We may assume that $\te_i$ is outgoing from $\tilde{v}$.
Let $\tilde{u}_i$ be the other end-vertex of $\te_i$,
and let $g_i=\tau(\psi'(\te_i))$ and $\tilde{p}_i=\tilde{\bp}(\tilde{u}_i)$ for $i=1,2,3$.
By the definition of the 1-extension,
we have $\tau(\psi(\te))=g_1^{-1}g_2$.
We also denote $\omega_i=\rho_{\bm j}(\psi'(\te_i))$ for $i=1,2,3$.

Note that
the three points $g_i\tilde{p}_i \ (i=1,2,3)$ never lie on a line due to the $\Gamma$-genericity of $\tilde{\bp}$
(see \cite[Lemma 6.1]{jkt} for a formal proof).
We take $\tilde{\bp}':V(H')\rightarrow \mathbb{R}^2$ such that $\tilde{\bp}'(w)=\tilde{\bp}(w)$ for all $\tilde{w}\in V(H)$,
and $\tilde{\bp}'(\tilde{v})$ is a point on the line through $g_1\tilde{p}_1$ and $g_2\tilde{p}_2$, but distinct from $g_1\tilde{p}_1$ and $g_2\tilde{p}_2$.
For the simplicity of the description, we assume $\tilde{u}_1\neq \tilde{u}_2$ in the subsequent discussion,
but exactly the same proof can be also applied if $\tilde{u}_1=\tilde{u}_2$.
Then $O_{\bm j}(H',\psi',\tilde{\bp}')$ has the form
\begin{equation*}
\begin{array}{c|c|c|c|cc|}
\multicolumn{1}{c}{} & \multicolumn{1}{c}{\tilde{v}} & \multicolumn{1}{c}{\tilde{u}_1} & \multicolumn{1}{c}{\tilde{u}_2}    \\ \cline{2-6}
\te_3 & \tilde{\bp}'(\tv)-g_3\tilde{p}_3 & \ast & \ast & \ast & \\
\te_1 & \tilde{\bp}'(\tv)-g_1\tilde{p}_1 & \omega_1(\tilde{p}_1-g_1^{-1}\tilde{\bp}'(\tilde{v})) & 0 &  0 &\\
\te_2 & \tilde{\bp}'(\tilde{v})-g_2\tilde{p}_2 & 0 & \omega_2(\tilde{p}_2-g_2^{-1}\tilde{\bp}'(\tilde{v})) &  0 & \\ \cline{2-6}
E(H)-\te  & 0 & \multicolumn{3}{c}{O_{\bm j}(H-\te,\psi,\tilde{\bp})} &  \\ \cline{2-6}
\end{array}
\end{equation*}
where the bottom right block $O_{\bm j}(H-\te,\psi,\tilde{\bp})$ denotes the $\rho_{\bm j}$-orbit rigidity matrix obtained from
$O_{\bm j}(H,\psi,\tilde{\bp})$ by removing the row of $\te$.

Since $\tilde{\bp}'(v)$ lies on the line through $g_1\tilde{p}_1$ and $g_2\tilde{p}_2$,
$\tilde{\bp}'(\tilde{v})-g_i\tilde{\bp}(\tilde{u}_i)$ is a scalar multiple of $g_1\tilde{p}_1-g_2\tilde{p}_2$ for $i=1,2$.
Hence, by multiplying  the rows of $\te_1$ and $\te_2$ by an appropriate scalar, $O(H',\psi',\tilde{\bp}')$ becomes
\begin{equation*}
\begin{array}{c|c|c|c|cc|}
\multicolumn{1}{c}{} & \multicolumn{1}{c}{\tilde{v}} & \multicolumn{1}{c}{\tilde{u}_1} & \multicolumn{1}{c}{\tilde{u}_2}    \\ \cline{2-6}
\te_3 & \tilde{\bp}'(\tilde{v})-g_3\tilde{p}_3 & \ast & \ast & \ast & \\
\te_1 & g_1\tilde{p}_1-g_2\tilde{p}_2 & -\omega_1g_1^{-1}(g_1\tilde{p}_1-g_2\tilde{p}_2) & 0 &  0 &\\
\te_2 & g_1\tilde{p}_1-g_2\tilde{p}_2 & 0 & -\omega_2g_2^{-1}(g_1\tilde{p}_1-g_2\tilde{p}_2) &  0 & \\ \cline{2-6}
E(H)-\te  & 0 & \multicolumn{3}{c}{O_{\bm j}(H-\te,\psi,\tilde{\bp})} &  \\ \cline{2-6}
\end{array}
\end{equation*}
Subtracting the row of $\te_1$ from that of $\te_2$, we  get
\begin{equation*}
\begin{array}{c|c|c|c|cc|}
\multicolumn{1}{c}{} & \multicolumn{1}{c}{\tilde{v}} & \multicolumn{1}{c}{\tilde{u}_1} & \multicolumn{1}{c}{\tilde{u}_2}    \\ \cline{2-6}
\te_3 & \tilde{\bp}'(\tilde{v})-g_3\tilde{p}_3 & \ast & \ast & \ast & \\
\te_1 & g_1\tilde{p}_1-g_2\tilde{p}_2 & -\omega_1g_1^{-1}(g_1\tilde{p}_1-g_2\tilde{p}_2) & 0 &  0 &\\ \cline{2-6}
\te_2 & 0 & \tilde{p}_1-g_1^{-1}g_2\tilde{p}_2 & \omega_1^{-1}\omega_2(\tilde{p}_2-g_2^{-1}g_1\tilde{p}_1) &  0 & \\ \cline{3-6}
E(H)-\te  & 0 & \multicolumn{3}{c}{O(H-\te,\psi,\tilde{\bp})} &  \\ \cline{2-6}
\end{array}
\end{equation*}
Since $\tau(\psi(\te))=g_1^{-1}g_2$, the row of $\te_2$ is equal to the row of $\te$ in $O_{\bm j}(H,\psi,\tilde{\bp})$.
This means that the right-bottom block together with the row of $\te_2$ forms
$O_{\bm j}(H,\psi,\tilde{\bp})$, which is row independent.
Thus, the matrix is row independent if and only if the top-left block is row independent.
Since $g_i\tilde{p}_i \ (i=1,2,3)$ are not on a line,
the line through $\tilde{\bp}'(v)$ and $g_3\tilde{p}_3$ is not parallel to
the line through $g_1\tilde{p}_1$ and $g_2\tilde{p}_2$.
This implies that the top-left $2\times 2$-block is row independent, and consequently $O_{\bm j}(H',\psi',\tilde{\bp}')$ is
row independent.
\end{proof}


\subsection{Characterizations for bar-joint frameworks with reflection symmetry}

We now give a combinatorial characterization of infinitesimally rigid
bar-joint frameworks with reflection symmetry $\mathcal{C}_s$ in the plane.
The following characterization of  rigid  frameworks with forced $\mathcal{C}_s$ symmetry
was already established in \cite{MT2,jkt}.

\begin{theorem}[Malestein and Theran~\cite{MT2,LT}, Jord{\'a}n et al. \cite{jkt}]
\label{thm:symmetry_reflection}
Let $\tau:\mathbb{Z}/2\mathbb{Z}\rightarrow {\cal C}_s$ be a faithful representation,
$(H,\psi)$ be a $\mathbb{Z}/2\mathbb{Z}$-gain graph,
and $\tilde{\bp}:V(H)\rightarrow \mathbb{R}^2$ be ${\cal C}_s$-regular.
Then $O_0(H,\psi,\tilde{\bp})$ is row independent if and only if
$(H,\psi)$ is $(2,3,1)$-gain-sparse.
\end{theorem}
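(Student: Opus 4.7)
The plan is to establish both implications. For necessity (row independence $\Rightarrow$ $(2,3,1)$-gain-sparsity), I would argue via dimension bounds on kernels of submatrices. For sufficiency, I would combine the inductive construction of $(2,3,1)$-gain-sparse graphs in Theorem~\ref{thm:construction} with the preservation lemma (Lemma~\ref{lem:extension}), exploiting the fact that the trivial representation $\rho_0$ produces no zero loops.

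For necessity, suppose $O_0(H,\psi,\tilde{\bp})$ is row independent. For any $F\subseteq E(H)$, the submatrix $O_0(F)$ has rank $|F|$, and its kernel contains the space $K_F$ of fully $\mathcal{C}_s$-symmetric trivial infinitesimal motions of the subframework on $V(F)$. Hence $|F|\leq 2|V(F)|-\dim K_F$. A direct calculation using $\tau(s)S=S\tau(s)$ and $\tau(s)t=t$ shows that, in general, the only fully symmetric trivial motions for reflection in the plane are translations along the mirror axis, so $\dim K_F\geq 1$, giving $|F|\leq 2|V(F)|-1$. When $F$ is balanced, apply Propositions~\ref{prop:switching} and \ref{prop:induced_subgroup} to switch all gains on $F$ to the identity without changing the rank. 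The covering subgraph over $F$ then decomposes into two disjoint copies of $F$ related by reflection, and any trivial motion of one copy (3-dimensional: two translations and one rotation) uniquely determines a fully symmetric trivial motion on the pair. Thus $\dim K_F\geq 3$, yielding $|F|\leq 2|V(F)|-3$.

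For sufficiency, suppose $(H,\psi)$ is $(2,3,1)$-gain-sparse. Since row independence of $O_0$ is a matroidal property inherited by subsets, and since $\mathcal{M}_{2,3,1}$ is a matroid (as noted in Section~\ref{sec:gain_sparsity}), I may extend $(H,\psi)$ to a connected $(2,3,1)$-gain-sparse $(H',\psi')$ with $|E(H')|=2|V(H')|-1$. By Theorem~\ref{thm:construction}, $(H',\psi')$ can be built from a single vertex carrying one unbalanced loop by a sequence of $0$-extensions, $1$-extensions, and loop-$1$-extensions. For the base case, the single row of $O_0$ is $2(\tilde{\bp}(\tilde{v})-\tau(s)\tilde{\bp}(\tilde{v}))$, which is non-zero for $\mathcal{C}_s$-generic $\tilde{\bp}$ since such $\tilde{\bp}$ places the vertex off the mirror axis. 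For the inductive step, Lemma~\ref{lem:extension} guarantees that each extension preserves row independence provided no newly created loop becomes a zero loop in $O_0$; by Proposition~\ref{lem:zero_loop} this would require $\rho_0(\psi)=-1$, which is impossible since $\rho_0\equiv 1$. Hence there exists a realization making $O_0(H',\psi',\tilde{\bp}')$ row independent, and $\mathcal{C}_s$-regularity of $\tilde{\bp}$ upgrades this to row independence for our given $\tilde{\bp}$.

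The main obstacle is the balanced count in the necessity argument, where one must leverage the switching equivalence to reduce to the trivial-gain case and then correctly identify the three-dimensional space of symmetric trivial motions on the decoupled pair of copies. The sufficiency direction is, by contrast, largely mechanical: the two technical ingredients (Theorem~\ref{thm:construction} and Lemma~\ref{lem:extension}) slot together cleanly once one observes that triviality of $\rho_0$ rules out zero loops at every step.
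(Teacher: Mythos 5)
Your proof is correct and follows essentially the same strategy as the paper: the paper proves the anti-symmetric analogue Theorem~\ref{thm:antisymmetry_reflection} by exactly these ingredients --- switching on balanced edge sets via Propositions~\ref{prop:switching} and \ref{prop:induced_subgroup} to reduce to a standard planar rigidity matrix, exhibiting explicit trivial kernel vectors for the global count, and inducting via Theorem~\ref{thm:construction} and Lemma~\ref{lem:extension} --- and then explicitly remarks that the same proof yields Theorem~\ref{thm:symmetry_reflection}. One small wording slip to be aware of: for balanced $F$ the $3$-dimensional space you exhibit in $\ker O_0(F)$ is the space of \emph{ordinary} trivial motions of the quotient subgraph $H[F]$ (equivalently, fully symmetric motions of the cover that restrict to a trivial motion on each of the two decoupled components), not the space of fully $\mathcal{C}_s$-symmetric trivial motions of the covering subframework in the paper's sense, which is always exactly $1$-dimensional under the commutation constraints $\tau(s)S=S\tau(s)$, $\tau(s)t=t$; the paper's phrasing --- that after switching the submatrix \emph{is} a standard $2$-dimensional rigidity matrix --- gets the $\geq 3$ bound without this ambiguity.
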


We now show that independence of the other submatrix block is characterized by $(2,3,2)$-gain-sparsity.

\begin{theorem}
\label{thm:antisymmetry_reflection}
Let $\tau:\mathbb{Z}/2\mathbb{Z}\rightarrow {\cal C}_s$ be
a faithful representation,
$(H,\psi)$ be a $\mathbb{Z}/2\mathbb{Z}$-gain graph,
and $\tilde{\bp}:V(H)\rightarrow \mathbb{R}^2$ be ${\cal C}_s$-regular.
Then $O_1(H,\psi,\tilde{\bp})$ is row independent if and only if
$(H,\psi)$ is $(2,3,2)$-gain-sparse.
\end{theorem}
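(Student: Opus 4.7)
The plan is to show that the row matroid of $O_1(H,\psi,\tilde{\bp})$ coincides with the $(2,3,2)$-gain-sparsity matroid $\mathcal{M}_{2,3,2}(H,\psi)$ from Section~\ref{sec:gain_sparsity}, by proving both containment relations. The necessity direction is a direct kernel count, and the sufficiency direction follows by induction using the constructive characterization Theorem~\ref{thm:construction} together with the preservation result Lemma~\ref{lem:extension}, after reducing from sparse to tight via a matroid augmentation.

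For necessity (row-independence $\Rightarrow$ $(2,3,2)$-sparsity), suppose the rows indexed by $F\subseteq E(H)$ are independent in $O_1(H,\psi,\tilde{\bp})$. Restricting to these rows and to the $2|V(F)|$ columns indexed by $V(F)$ preserves independence; the restriction is the $\rho_1$-orbit rigidity matrix of the induced subgraph $(V(F),F)$ at $\tilde{\bp}|_{V(F)}$. Its kernel always contains two explicit $\rho_1$-symmetric ``trivial'' motions: the anti-symmetric translation $\tilde{\bmm}(\tilde{v})=t$ with $\tau(s)t=-t$, and the anti-symmetric infinitesimal rotation $\tilde{\bmm}(\tilde{v})=S\tilde{\bp}(\tilde{v})$ where $S=\begin{pmatrix}0 & -1 \\ 1 & 0\end{pmatrix}$. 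Substituting into (\ref{eq:j_inf_quot}) and using the identities $(I+\tau(s))t=0$ and $S\tau(s)+\tau(s)S=0$ verifies that both vectors lie in the kernel on every edge type ($\psi_{\te}=id$ and $\psi_{\te}=s$), yielding $|F|\leq 2|V(F)|-2$. If $F$ is moreover balanced, Propositions~\ref{prop:switching} and \ref{prop:induced_subgroup} allow us to switch so that every edge of $F$ carries gain $id$; then $\omega^{\psi_{\te}}=1$ and the $F$-rows of $O_1$ reduce to those of the standard planar rigidity matrix of $(V(F),F,\tilde{\bp}|_{V(F)})$, whose kernel contains the full $3$-dimensional space of Euclidean infinitesimal motions, giving $|F|\leq 2|V(F)|-3$.

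For sufficiency, I would first handle the tight case $|E(H)|=2|V(H)|-2$. Observe that $(2,3,2)$-sparse graphs are loopless: a single loop $\te$ already violates $1\leq 2\cdot 1-2=0$ regardless of balance. Hence by Theorem~\ref{thm:construction} with $m=2$, a $(2,3,2)$-tight gain graph is built from the one-vertex edgeless graph by a sequence of $0$-extensions and $1$-extensions only, and loop-$1$-extensions do not occur. The base case has an empty orbit rigidity matrix, trivially row-independent, and Lemma~\ref{lem:extension} preserves row-independence through each step; since row-independence is an open condition, it holds at every $\mathcal{C}_s$-regular $\tilde{\bp}$. For a general $(2,3,2)$-sparse $(H,\psi)$, extend it to a $(2,3,2)$-tight supergraph $(\widehat{H},\widehat{\psi})$ on the same vertex set by adjoining edges with chosen gains (trivial when $|V(H)|=1$; otherwise always possible since the $(2,3,2)$-sparsity matroid of the complete $\mathbb{Z}/2\mathbb{Z}$-gain graph on $V(H)$ has rank $2|V(H)|-2$). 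Applying the tight case to $(\widehat{H},\widehat{\psi})$ at a $\mathcal{C}_s$-generic $\tilde{\bp}$, all rows are independent in $O_1(\widehat{H},\widehat{\psi},\tilde{\bp})$, so in particular the rows indexed by $E(H)$ are independent in $O_1(H,\psi,\tilde{\bp})$.

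The main obstacle I expect to confront is the augmentation step in the sufficiency direction: one must ensure that every $(2,3,2)$-sparse edge set sits inside a $(2,3,2)$-tight edge set on the same vertex set within some gain graph accessible to the orbit rigidity analysis. This is a standard matroid augmentation argument given that $\mathcal{M}_{2,3,2}$ is known to be a matroid, but it requires a short, explicit verification that the complete $\mathbb{Z}/2\mathbb{Z}$-gain graph on $V(H)$ has full $(2,3,2)$-rank $2|V(H)|-2$ for $|V(H)|\geq 2$ (e.g. by exhibiting a spanning tree of gain $id$ together with one additional edge of gain $s$ attached to generate the needed unbalanced cycles). The remaining substantive content of the argument has been packaged in Theorem~\ref{thm:construction} and Lemma~\ref{lem:extension}; the necessity direction is entirely captured by the explicit two-dimensional kernel computation above.
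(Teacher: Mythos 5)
Your proof follows essentially the same route as the paper's: the necessity direction uses the same two explicit anti-symmetric trivial kernel vectors plus the switching argument to reduce balanced subsets to a standard rigidity matrix, and the sufficiency direction invokes Theorem~\ref{thm:construction} with $m=2$ and Lemma~\ref{lem:extension} exactly as in the paper. You additionally make explicit two points the paper leaves tacit (that $(2,3,2)$-sparse gain graphs are loopless, so loop-$1$-extensions cannot arise, and that the reduction to the tight case requires a matroid augmentation to rank $2|V(H)|-2$), which are genuine clarifications rather than a different method, though your parenthetical sketch of a rank-$(2n-2)$ witness via ``a spanning tree plus one $s$-edge'' would need to be fleshed out since that set has only $n$ edges.
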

\begin{proof}
First we show that if $O_1(H,\psi,\tilde{\bp})$ is row independent then
$(H,\psi)$ is $(2,3,2)$-gain-sparse.
Suppose to the contrary that there exists a balanced $F\subseteq E(H)$ with $|F|> 2|V(F)|-3$.
Then, by Proposition~\ref{prop:switching} and Proposition~\ref{prop:induced_subgroup},
we may assume that $\psi(\te)=id$ for every $\te\in F$.
Then $O_1(H,\psi,\tilde{\bp})$ has a row dependency, because the submatrix of $O_1(H,\psi,\tilde{\bp})$ obtained
by deleting all rows in $O_1(H,\psi,\tilde{\bp})$ that do not correspond to edges in $F$
is a standard $2$-dimensional rigidity matrix with
$2|V(F)|$ columns and $|F|> 2|V(F)|-3$ edges. Suppose next that there exists an unbalanced subset $F$ of $E(H)$ with $|F|> 2|V(F)|-2$, and
assume wlog that the reflection is given by $\left(\begin{array}{c c} -1 & 0\\ 0 & 1 \end{array}\right)$.
Then $O_1(H,\psi,\tilde{\bp})$ again has a row dependency since it is easy to check that the infinitesimal translation $\tilde{\bmm}:V(H)\rightarrow \mathbb{R}^2$ defined by
$\tilde{\bmm}(\tilde{v})=\begin{pmatrix}1 \\ 0\end{pmatrix}$ for $\tilde{v}\in V(H)$ and the infinitesimal rotation $\tilde{\bmm}':V(H)\rightarrow \mathbb{R}^2$ defined by
$\tilde{\bmm}'(\tilde{v})=\begin{pmatrix}-(p_{\tilde{v}})_2\\ (p_{\tilde{v}})_1\end{pmatrix}$ for $\tilde{v}\in V(H)$ both lie in the kernel of $O_1(H,\psi,\tilde{\bp})$, and hence the kernel of
$O_1(H,\psi,\tilde{\bp})$ is of dimension at least $2$.

To prove that $(2,3,2)$-gain-sparsity is sufficient for $O_1(H,\psi,\tilde{\bp})$ to be row independent, we may employ induction on $|V(H)|$.
Suppose that $(H,\psi)$ is $(2,3,2)$-gain-sparse.
If $|V(H)|=1$, then $|E(H)|=0$, and there is nothing to prove.
If $|V(H)|>1$, we may assume that $|E(H)|=2|V(H)|-2$.
Combining Theorem~\ref{thm:construction} and Lemma~\ref{lem:extension},
we conclude that $O_1(H,\psi,\tilde{\bp})$ is row independent for a ${\cal C}_s$-regular $\tilde{\bp}$.
\end{proof}

It is easy to see that the same proof can be applied to show Theorem~\ref{thm:symmetry_reflection}
(which is the proof given in \cite{jkt}).


\begin{theorem}
\label{thm:reflection}
Le $\tau:\mathbb{Z}/2\mathbb{Z}\rightarrow {\cal C}_s$ be a faithful representation,
$G$ be a $\mathbb{Z}/2\mathbb{Z}$-symmetric graph with $\theta:\mathbb{Z}/2\mathbb{Z}\rightarrow {\rm Aut}(G)$,
and $(G,\bp)$ be a ${\cal C}_s$-regular  framework with respect to $\theta$ and $\tau$.
Then the rank of $R(G,\bp)$ is equal to the sum of
the rank of ${\cal M}_{2,3,1}(H,\psi)$ and that of ${\cal M}_{2,3,2}(H,\psi)$,
where $(H,\psi)$ denotes  the quotient gain graph.
\end{theorem}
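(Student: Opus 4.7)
The plan is to chain together three pieces already established in the paper: the block-diagonalization of the rigidity matrix, the equivalence between each block and its orbit rigidity matrix, and the combinatorial descriptions of row-independence for the two blocks of a $\mathcal{C}_s$-symmetric framework. First I would invoke the decomposition \eqref{rigblocks} for $\Gamma = \mathbb{Z}/2\mathbb{Z}$, which gives
\[
\text{rank}\, R(G,\bp) = \text{rank}\, \widetilde{R}_0(G,\bp) + \text{rank}\, \widetilde{R}_1(G,\bp),
\]
and then apply Proposition~\ref{prop:abelian} (or equivalently Proposition~\ref{prop:cyclic}) to replace each $\widetilde{R}_j(G,\bp)$ by the corresponding orbit rigidity matrix $O_j(H,\psi,\tilde{\bp})$. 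This reduces the statement to proving that $\text{rank}\, O_0(H,\psi,\tilde{\bp}) = \text{rank}\, \mathcal{M}_{2,3,1}(H,\psi)$ and $\text{rank}\, O_1(H,\psi,\tilde{\bp}) = \text{rank}\, \mathcal{M}_{2,3,2}(H,\psi)$.

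The next step is to upgrade Theorems~\ref{thm:symmetry_reflection} and \ref{thm:antisymmetry_reflection} from a characterization of \emph{full} row-independence to a computation of the \emph{rank}. The rows of $O_j(H,\psi,\tilde{\bp})$ define a linear matroid on $E(H)$, so its rank equals the largest $|F|$ with $F \subseteq E(H)$ row-independent in $O_j$. For any such $F$, the submatrix of $O_j(H,\psi,\tilde{\bp})$ restricted to the rows indexed by $F$ is, after discarding the zero columns corresponding to vertices outside $V(F)$, exactly $O_j(H_F,\psi|_F,\tilde{\bp}|_{V(F)})$, where $H_F$ is the subgraph of $H$ on the edge set $F$. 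Applying Theorems~\ref{thm:symmetry_reflection} and \ref{thm:antisymmetry_reflection} to $(H_F,\psi|_F)$ then gives that $F$ is row-independent in $O_j$ if and only if $F$ is $(2,3,j+1)$-gain-sparse, which is exactly the condition for $F$ to be independent in $\mathcal{M}_{2,3,j+1}(H,\psi)$. Summing over $j \in \{0,1\}$ yields the theorem.

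The only step that is not purely formal is the transfer of $\mathcal{C}_s$-regularity to subgraphs invoked above: applying Theorems~\ref{thm:symmetry_reflection} and \ref{thm:antisymmetry_reflection} to each $(H_F,\psi|_F)$ requires that $\tilde{\bp}|_{V(F)}$ be $\mathcal{C}_s$-regular for the smaller gain graph. This is standard but must be stated carefully: since the rank of a matrix whose entries are polynomials in $\tilde{\bp}$ is lower-semicontinuous and attains its maximum on a Zariski-open set, for each subgraph $H_F$ the set of $\tilde{\bp}$ that is $\mathcal{C}_s$-regular for $O_j(H_F,\psi|_F,\cdot)$ is a dense open subset of the $\mathcal{C}_s$-symmetric configurations. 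Taking the intersection over the finitely many subgraphs $H_F$ shows that any $\mathcal{C}_s$-regular $\tilde{\bp}$ for $O_j(H,\psi,\cdot)$ may be assumed to be $\mathcal{C}_s$-regular for every subgraph's orbit rigidity matrix simultaneously. This is the main technical point; once handled, the rest of the argument is a direct concatenation of the preceding results.
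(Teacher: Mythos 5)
Your proposal is correct and follows essentially the same route as the paper: block-diagonalize via \eqref{rigblocks}, pass to the orbit rigidity matrices via Proposition~\ref{prop:cyclic}, and invoke Theorems~\ref{thm:symmetry_reflection} and~\ref{thm:antisymmetry_reflection}. The paper leaves the passage from row-independence to matroid-rank equality implicit (handling the subgraph-regularity issue by reducing to the $\mathcal{C}_s$-generic case at the outset), whereas you spell it out via restriction to subgraphs and a Zariski-openness argument; this is a fuller version of the same argument rather than a different one.
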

\begin{proof}
We may assume that $\bp$ is ${\cal C}_s$-generic.
By (\ref{rigblocks}) and Proposition~\ref{prop:cyclic},
we have $${\rm rank}\ R(G,\bp)={\rm rank}\ O_0(H,\psi,\tilde{\bp})+{\rm rank}\ O_1(H,\psi,\tilde{\bp})$$
for the quotient $\tilde{\bp}$ of $\bp$.
By Theorems~\ref{thm:symmetry_reflection} and \ref{thm:antisymmetry_reflection},
the rank of $O_j(H,\psi,\tilde{\bp})$ is equal to the rank of ${\cal M}_{2,3,1+j}(H,\psi)$ for $j=0,1$.
\end{proof}

Theorem~\ref{thm:reflection} shows how to compute the first-order degrees of freedom of $(G,\bp)$. However, if we are only interested
in checking infinitesimal rigidity, then we may use the following simpler condition.
\begin{corollary}
\label{cor:reflection}
Let $\tau:\mathbb{Z}/2\mathbb{Z} \rightarrow {\cal C}_s$ be a faithful representation,
$G$ be a $\mathbb{Z}/2\mathbb{Z}$-symmetric graph with $\theta:\mathbb{Z}/2\mathbb{Z}\rightarrow {\rm Aut}(G)$,
and $(G,\bp)$ be a ${\cal C}_s$-regular  framework with respect to $\theta$ and $\tau$.
Then $(G,\bp)$ is infinitesimally rigid if and only if
the quotient gain graph $(H,\psi)$ satisfies $|E(H)|\geq 2|V(H)|-1$ and contains a spanning subgraph $(H',\psi')$
which is $(2,3,2)$-gain-sparse with $|E(H')|=2|V(H')|-2$.
\end{corollary}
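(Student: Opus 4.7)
I would derive the corollary from Theorem~\ref{thm:reflection}, which states
\[
\rank\,R(G,\bp) \;=\; \rank\,\mathcal{M}_{2,3,1}(H,\psi)+\rank\,\mathcal{M}_{2,3,2}(H,\psi).
\]
The global gain-sparsity counts force $\rank\,\mathcal{M}_{2,3,m}(H,\psi)\le 2|V(H)|-m$ for $m\in\{1,2\}$, so infinitesimal rigidity of $(G,\bp)$, i.e.\ $\rank\,R(G,\bp)=2|V(G)|-3=4|V(H)|-3$, is equivalent to both matroid ranks attaining their maxima, namely $\rank\,\mathcal{M}_{2,3,1}=2|V(H)|-1$ and $\rank\,\mathcal{M}_{2,3,2}=2|V(H)|-2$.

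For the forward direction, $\rank\,\mathcal{M}_{2,3,2}=2|V(H)|-2$ produces a $(2,3,2)$-gain-sparse base $H'$ with $|E(H')|=2|V(H)|-2$; the sparsity bound $|E(H')|\le 2|V(H')|-2$ forces $|V(H')|=|V(H)|$, so $H'$ is automatically spanning, and $\rank\,\mathcal{M}_{2,3,1}=2|V(H)|-1$ requires $|E(H)|\ge 2|V(H)|-1$. For the backward direction, the given spanning $(2,3,2)$-gain-sparse $H'$ immediately yields $\rank\,\mathcal{M}_{2,3,2}(H,\psi)=2|V(H)|-2$, reducing the task to establishing $\rank\,\mathcal{M}_{2,3,1}(H,\psi)=2|V(H)|-1$. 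Since $(2,3,2)$-sparsity is stronger than $(2,3,1)$-sparsity, $H'$ is already $\mathcal{M}_{2,3,1}$-independent of size $2|V(H)|-2$, and the edge-count hypothesis furnishes an $e\in E(H)\setminus E(H')$; it suffices to select such an $e$ so that $H'+e$ is still $(2,3,1)$-gain-sparse. If $e$ is a loop, freeness of $\theta$ on $V(G)$ makes its gain non-trivial, so $e$ is unbalanced and enters no new balanced subset; combined with the tight global count $|H'+e|=2|V(H)|-1$, $H'+e$ is $(2,3,1)$-gain-sparse directly.

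The main obstacle is the case where every edge of $E(H)\setminus E(H')$ is a non-loop. I would argue by contradiction: if every such candidate $e$ fails, then $H'+e$ contains a balanced $F_e\ni e$ with $|F_e|=2|V(F_e)|-2$, which forces $F_e-e\subseteq H'$ to be a balanced tight $(2,3)$-sparse subset with $V(F_e-e)=V(F_e)$, and $e$ preserves the balance upon addition. After a common switching rendering all $F_e-e$ trivially labeled, the modular inequality for the $(2,3)$-sparsity matroid assembles them into a single balanced tight $B\subseteq H'$, and then all extras lie inside $V(B)$ and preserve balance. The $(2,3,2)$-tightness of $H'$ bounds $|H'[V(B)]\setminus B|\le 1$, and a careful count then shows that every $(2,3,1)$-gain-sparse spanning subgraph of $H$ has at most $2|V(H)|-2$ edges, contradicting $\rank\,\mathcal{M}_{2,3,1}=2|V(H)|-1$. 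The trickiest step is to verify that the switchings rendering the various $F_e-e$ simultaneously trivial are mutually compatible so that $B$ is genuinely balanced and tight, and to handle the edge case $|H'[V(B)]\setminus B|=1$ (where the single extra $H'$-edge inside $V(B)$ must be unbalancing) with appropriate care in the counting.
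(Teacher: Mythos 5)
Your forward direction and the reduction via Theorem~\ref{thm:reflection} are correct, as is your treatment of the loop case. The non-loop case of the backward direction, however, is circular: you close by invoking ``contradicting $\rank\,\mathcal{M}_{2,3,1}=2|V(H)|-1$,'' but $\rank\,\mathcal{M}_{2,3,1}(H,\psi)=2|V(H)|-1$ is precisely what the backward direction must establish, not a standing assumption. The hypotheses of the corollary only supply $|E(H)|\geq 2|V(H)|-1$ together with a spanning $(2,3,2)$-gain-tight $H'$, and neither gives a lower bound $\rank\,\mathcal{M}_{2,3,1}\geq 2|V(H)|-1$; the edge count alone says nothing about the rank because the extra edges may all fall in the closure of $H'$.

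You were right to isolate ``choose $e\in E(H)\setminus E(H')$ so that $H'+e$ is $(2,3,1)$-gain-sparse'' as the crux; the paper's own proof quietly asserts that adding \emph{any} new edge to a $(2,3,2)$-gain-tight graph yields a $(2,3,1)$-gain-sparse graph, and this assertion is false. In fact the corollary as stated is false. Take $\Gamma=\mathbb{Z}/2\mathbb{Z}$, $V(H)=\{1,2,3,4\}$, and $E(H)=\{(1,2;0),(1,3;0),(1,4;0),(2,3;0),(2,4;0),(3,4;1),(3,4;0)\}$. Then $|E(H)|=7=2|V(H)|-1$ and $H':=H-(3,4;0)$ is spanning and $(2,3,2)$-gain-tight, so the stated hypotheses hold; but $E(H)\setminus\{(3,4;1)\}$ is a trivially labeled $K_4$, a balanced set with $6>5=2\cdot 4-3$ edges, so $\rank\,\mathcal{M}_{2,3,1}(H,\psi)=\rank\,\mathcal{M}_{2,3,2}(H,\psi)=6$ and Theorem~\ref{thm:reflection} gives $\rank\,R(G,\bp)=12<13=2|V(G)|-3$. (Concretely, the covering graph is two copies of $K_4$ joined by two cross-bars, which is generically flexible.) To obtain a correct statement one must replace the edge-count hypothesis by the genuine matroidal condition that $H$ also contains a spanning $(2,3,1)$-gain-tight subgraph, which is what Theorem~\ref{thm:reflection} actually yields; so no argument along your lines, or the paper's, can close the gap as stated.
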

\begin{proof}
By Theorem~\ref{thm:reflection}, $(G,\bp)$ is infinitesimally rigid if and only if
$(H,\psi)$ contains two spanning subgraphs $(H_0,\psi_0)$ and $(H_1,\psi_1)$ such that
$(H_i,\psi_i)$ is $(2,3,i+1)$-sparse with $|E(H_i)|=2|V(H_i)|-(i+1)$ for $i=0,1$.
Observe that for a $(2,3,2)$-gain-sparse graph $(H_1,\psi_1)$ with $|E(H_1)|=2|V(H_1)|-2$,
adding a new edge to $H_1$ results in a $(2,3,1)$-gain-sparse graph $(H_0,\psi_0)$ with
$|E(H_0)|=2|V(H_0)|-1$.
This gives the result.
\end{proof}

For example, using Corollary~\ref{cor:reflection}, it is easy to verify that the framework shown in Figure~\ref{antisym}(a) is infinitesimally flexible (with an anti-symmetric infinitesimal flex): while the corresponding gain graph $(H,\psi)$ shown in Figure~\ref{csquotgr} is $(2,3,1)$-gain-sparse with $|E(H)|=6> 5=2|V(H)|-1$, it does not contain a spanning subgraph $(H',\psi')$
which is $(2,3,2)$-gain-sparse with $|E(H')|=2|V(H')|-2$. (Note that a loop violates $(2,3,2)$-gain sparsity.)


\subsection{Characterizations for bar-joint frameworks with rotational symmetry}

We now discuss  combinatorial characterizations of infinitesimally rigid
frameworks with rotational symmetry ${\cal C}_k$ in the plane.
A characterization of the row independence  of $O_0(H,\psi,\tilde{p})$ was already established in \cite{MT3}.
(See also \cite{jkt} for a simpler proof).

\begin{theorem}[Malestein and Theran~\cite{MT3}]
\label{thm:symmetry_rotation}
Let $k\geq 2$,
$\tau:\mathbb{Z}/k\mathbb{Z}\rightarrow {\cal C}_k$ be a faithful representation,
$(H,\psi)$ be a $\mathbb{Z}/k\mathbb{Z}$-gain graph,
and $\tilde{\bp}:V(H)\rightarrow \mathbb{R}^2$ be ${\cal C}_k$-regular.
Then $O_0(H,\psi,\tilde{\bp})$ is row independent if and only if
$(H,\psi)$ is $(2,3,1)$-gain-sparse.
\end{theorem}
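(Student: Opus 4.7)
The plan is to mirror the strategy used for Theorem~\ref{thm:antisymmetry_reflection}, exploiting the constructive characterization in Theorem~\ref{thm:construction} together with Lemma~\ref{lem:extension}. Necessity will be established directly from the combinatorics of $O_0$, and sufficiency by induction on $|V(H)|$ along the Henneberg-type buildup. The key arithmetic difference compared with the $(2,3,2)$ characterization in Theorem~\ref{thm:antisymmetry_reflection} is a dimension count: for ${\cal C}_k$ rotation with $k\geq 2$ there is a one-dimensional space of fully-symmetric trivial infinitesimal motions (the infinitesimal rotations about the center of symmetry), whereas for reflection anti-symmetry that kernel is two-dimensional.

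For the necessity direction, I would assume $O_0(H,\psi,\tilde{\bp})$ is row independent and verify both parts of $(2,3,1)$-gain-sparsity. For a balanced $F\subseteq E(H)$, Proposition~\ref{prop:switching} and Proposition~\ref{prop:induced_subgroup} let me switch to a gain function with $\psi\equiv id$ on $F$ without changing the rank. With trivial gains and $\rho_0\equiv 1$, the rows of $O_0$ indexed by $F$ are exactly those of the standard $2$-dimensional rigidity matrix on $V(F)$, which has rank at most $2|V(F)|-3$; hence $|F|\leq 2|V(F)|-3$. For an arbitrary $F\subseteq E(H)$, I would exhibit a one-dimensional subspace of the kernel of the submatrix on $F$: since $\tau$ is a faithful rotation representation, the vector field $\tilde{\bmm}(\tv)=S\tilde{\bp}(\tv)$ with $S=\bigl(\begin{smallmatrix}0&-1\\1&0\end{smallmatrix}\bigr)$ satisfies the fully-symmetric transformation law (because $S$ commutes with every planar rotation) and annihilates every row of $O_0$, giving $|F|\leq 2|V(F)|-1$.

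For sufficiency, I would apply induction on $|V(H)|$ and reduce to the case $|E(H)|=2|V(H)|-1$. By Theorem~\ref{thm:construction} any such $(2,3,1)$-gain-sparse graph is built from a single vertex with an unbalanced loop via a sequence of $0$-extensions, $1$-extensions, and loop-$1$-extensions. The base case is direct: for a single unbalanced loop $\te$ at $\tv$, faithfulness of $\tau$ gives $\tau(\psi_{\te})\neq I_2$, and a short computation shows the combined entry $2\tilde{p}(\tv)-(\tau(\psi_{\te})+\tau(\psi_{\te})^{-1})\tilde{p}(\tv)=(2-2\cos\theta)\tilde{p}(\tv)$ is nonzero for $\Gamma$-generic $\tilde{\bp}$. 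For the inductive step, Lemma~\ref{lem:extension} preserves row independence of $O_0$ at every extension, provided no new loop becomes a zero loop in $O_0$. But by Proposition~\ref{lem:zero_loop} a zero loop requires $\rho_0(\psi)=-1$, which is impossible since $\rho_0\equiv 1$. Thus all three extension types transfer row independence from the predecessor to $(H',\psi')$.

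The main obstacle I anticipate is purely bookkeeping: verifying that the switching argument from Proposition~\ref{prop:switching} genuinely respects $\Gamma$-genericity, and keeping straight the reduction from the ``fully $\Gamma$-symmetric trivial motion space'' to the ``$-1$'' in the sparsity count. Once those dimension tallies are pinned down, the inductive argument is routine because Lemma~\ref{lem:extension} is already proved uniformly across all irreducible representations and all three extension moves. Note that the resulting theorem is analogous to Theorem~\ref{thm:reflection}, and one can derive a combinatorial criterion for infinitesimal rigidity of ${\cal C}_k$-generic frameworks by combining this with a corresponding characterization of the remaining blocks $\widetilde{R}_j$ for $j\neq 0$, though only $k\in\{2,3\}$ is expected to admit such a clean statement.
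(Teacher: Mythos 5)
Your proof is correct and follows essentially the same strategy the paper uses for the analogous Theorem~\ref{thm:antisymmetry_reflection}, which the authors explicitly note carries over to the fully-symmetric block (``the same proof can be applied to show Theorem~\ref{thm:symmetry_reflection}''): necessity via switching to trivial gains on balanced sets plus exhibiting the one-dimensional kernel spanned by the infinitesimal rotation $\tv\mapsto S\tilde{\bp}(\tv)$ (which commutes with all planar rotations and is killed by $O_0$ since $S$ is skew), and sufficiency via the constructive characterization of Theorem~\ref{thm:construction} together with Lemma~\ref{lem:extension}, noting via Proposition~\ref{lem:zero_loop} that no zero loops can arise when $\rho_0\equiv 1$. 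The base-case computation $(2-2\cos\theta)\tilde{\bp}(\tv)\neq 0$ for an unbalanced loop under a faithful $\tau$ is also right, so the argument is complete and matches the paper's intended route.
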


For frameworks with an arbitrary rotational symmetry $\mathcal{C}_k$, it is not as easy as for frameworks with reflection symmetry to
extend Theorem~\ref{thm:symmetry_rotation} to the other orbit matrices. However, the  following result holds for all rotational groups $\mathcal{C}_k$.

\begin{lemma}
\label{lem:rotation}
Let $k\geq 3$, $\tau:\mathbb{Z}/k\mathbb{Z}\rightarrow {\cal C}_k$
be a faithful representation,
$(H,\psi)$ be a $\mathbb{Z}/k\mathbb{Z}$-gain graph,
and $\tilde{\bp}:V(H)\rightarrow \mathbb{R}^2$ be ${\cal C}_k$-regular.
If $O_j(H,\psi,\tilde{\bp})$ is row independent,
then $(H,\psi)$ is $(2,3,0)$-gain-sparse.
Moreover, if  $j=1$ or $j=k-1$, then $O_j(H,\psi,\tilde{\bp})$ has a  kernel of dimension at least $1$, and
$(H,\psi)$ is $(2,3,1)$-gain-sparse.

Similarly, if $k=2$, then
the independence of $O_1(H,\psi,\tilde{\bp})$ implies that $(H,\psi)$ is $(2,3,2)$-gain-sparse.
\end{lemma}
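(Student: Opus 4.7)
My plan is to derive each of the three sparsity conditions from a concrete obstruction inside an appropriate submatrix of $O_j(H,\psi,\tilde{\bp})$. Since row independence is an open condition and every $\Gamma$-generic $\tilde{\bp}$ is $\mathcal{C}_k$-regular, I may assume throughout that $\tilde{\bp}$ is $\Gamma$-generic, which gives me access to the switching calculus of Propositions~\ref{prop:switching} and \ref{prop:induced_subgroup}. For any $F\subseteq E(H)$ the rows of $O_j(H,\psi,\tilde{\bp})$ indexed by $F$ vanish on the columns of $V(H)\setminus V(F)$, so row independence of these rows is equivalent to row independence of the $|F|\times 2|V(F)|$ submatrix $M_F$ obtained by keeping only the columns of $V(F)$.

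For the balanced bound $|F|\leq 2|V(F)|-3$, I would choose a spanning forest of $F$ and switch to arrange that every edge of the forest carries gain $id$; since $F$ is balanced, Proposition~\ref{prop:induced_subgroup} forces every non-forest edge of $F$ to acquire gain $id$ as well. After this normalisation each row of $M_F$ coincides with the corresponding row of the classical planar rigidity matrix of $(\tilde{\bp}|_{V(F)},F)$, whose kernel contains the standard three-dimensional space of planar infinitesimal translations and rotations; hence $\rank(M_F)\leq 2|V(F)|-3$, and row independence forces $|F|\leq 2|V(F)|-3$. The unbalanced bound $|F|\leq 2|V(F)|$ follows immediately from the column count of $M_F$.

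For the moreover clause with $k\geq 3$, I will exhibit an extra kernel vector common to every $M_F$. Pick a non-zero eigenvector $t\in\mathbb{C}^2$ of the planar rotation $\tau(C_k)$ for the eigenvalue $\bar{\omega}^{j}$; the two eigenvalues of $\tau(C_k)$ are $\omega$ and $\bar{\omega}$, so such a $t$ exists precisely when $j\in\{1,k-1\}$. Set $\tilde{\bmm}(\tv):=t$ for every $\tv\in V(H)$ and substitute into the row of $O_j$ indexed by $(\tu,\tv;g)$: using $(Ax)^{\top}y=x^{\top}A^{\top}y$ together with the orthogonality $\tau(g)^{\top}=\tau(g)^{-1}$ and the eigenrelations $\tau(g)^{\top}t=\omega^{jg}t$ and $(\tau(g)^{-1})^{\top}t=\bar{\omega}^{jg}t$, the row applied to $\tilde{\bmm}$ collapses to $\tilde{\bp}(\tu)^{\top}(1-\omega^{jg}\bar{\omega}^{jg})\,t+\tilde{\bp}(\tv)^{\top}(\omega^{jg}-\omega^{jg})\,t=0$, since $\omega^{jg}\bar{\omega}^{jg}=1$. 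Thus $\tilde{\bmm}$ lies in $\ker M_F$ for every $F$, which sharpens the unbalanced bound to $|F|\leq 2|V(F)|-1$. For $k=2$, $j=1$ the matrix $\tau(C_2)=-I$ has its unique eigenvalue $\bar{\omega}^{1}=-1$ on all of $\mathbb{C}^2$, so two independent choices of $t$ produce two independent kernel vectors of $M_F$, which improves the bound to $|F|\leq 2|V(F)|-2$.

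The main delicate point is the cancellation carried out in the previous paragraph: the row of $O_j$ mixes $\tau(g)$ and $\tau(g)^{-1}$ weighted asymmetrically by $\omega^{jg}$, and it is only the precise match between the eigenvalue $\bar{\omega}^{j}$ of $\tau(C_k)$ and the weight $\omega^{jg}$ (together with $\omega^{jg}\bar{\omega}^{jg}=1$) that makes the two contributions vanish. Once this identity is in place, the three inequalities assemble into exactly the $(2,3,m)$-gain-sparsity asserted in each part of the lemma.
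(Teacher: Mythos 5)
Your proposal is correct and follows essentially the same route as the paper: switching to reduce a balanced subset to a standard planar rigidity matrix, and exhibiting a constant kernel vector field built from a $\bar\omega^{j}$-eigenvector of $\tau(C_k)$ (the paper writes this explicitly as $(1,\pm\sqrt{-1})^{\top}$ and verifies it against the $\rho_j$-symmetric motion equation rather than directly against the rows of $O_j$, but these are the same calculation). The only cosmetic difference is that you state the eigenvector condition abstractly, which makes the restriction to $j\in\{1,k-1\}$ and the two-dimensional kernel for $k=2$ transparent.
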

\begin{proof}
Suppose that $O_j(H,\psi,\tilde{\bp})$ is row independent.
It is easy to see that $|F|\leq 2|V(F)|$ for any $F\subseteq E(H)$.

If $F$ is balanced, then, by Proposition~\ref{prop:switching} and Proposition~\ref{prop:induced_subgroup},
we may assume that $\psi(\te)=id$ for every $\te\in F$.
Then the submatrix of $O_j(H,\psi,\tilde{\bp})$ corresponding to the edges in $F$ is 
a standard $2$-dimensional rigidity matrix.
Thus, $|F|\leq 2|V(F)|-3$ holds, and hence $(H,\psi)$ is $(2,3,0)$-gain-sparse.

Suppose further that $j=1$ or $j=k-1$.
We will show that $O_j(H,\psi,\tilde{\bp})$ always has a  kernel of dimension at least $1$.
To see this, recall that for any $\gamma\in \mathbb{Z}/k\mathbb{Z}$,
\begin{equation}
\label{eq:translation}
\tau(\gamma)\begin{pmatrix} 1 \\ \sqrt{-1} \end{pmatrix}=\omega^{\gamma}\begin{pmatrix} 1 \\ \sqrt{-1} \end{pmatrix} \qquad
\tau(\gamma)\begin{pmatrix} 1 \\ -\sqrt{-1} \end{pmatrix}=\bar{\omega}^{\gamma}\begin{pmatrix} 1 \\ -\sqrt{-1} \end{pmatrix}.
\end{equation}
where $\tau(\gamma)=\begin{pmatrix} \cos \gamma \theta & \sin \gamma \theta  \\
-\sin \gamma \theta & \cos \gamma \theta
 \end{pmatrix}$ and $\omega=e^{\sqrt{-1}\theta}$
with $\theta=\frac{2\pi}{k}$.


If $j=1$, we define $\tilde{\bmm}:V(H)\rightarrow \mathbb{C}^2$ by
$\tilde{\bmm}(\tilde{v})=\begin{pmatrix}1 \\ \sqrt{-1}\end{pmatrix}$ for $\tilde{v}\in V(H)$.
Then, for any $\tilde{u},\tilde{v}\in V(H)$, we have
$\tilde{\bmm}(\tilde{u})-\bar{\omega}^{\gamma}\tau(\gamma)\tilde{\bmm}(\tilde{v})=\tilde{\bmm}(\tilde{u})-\bar{\omega}^{\gamma}\omega^{\gamma}\tilde{\bmm}(\tilde{v})=0$
by (\ref{eq:translation}),
which means that $\tilde{\bmm}$ is in the kernel of $O_1(H,\psi,\tilde{\bp})$ by (\ref{eq:j_inf_quot}).
Similarly, for $j=k-1$,
$\tilde{\bmm}:V(H)\rightarrow \mathbb{C}^2$ defined by
$\tilde{\bmm}(\tilde{v})=\begin{pmatrix}1 \\ -\sqrt{-1}\end{pmatrix}$ for $\tilde{v}\in V(H)$
is in the kernel of $O_{k-1}(H,\psi,\tilde{\bp})$.

Therefore, if $j=1$ or $j=k-1$, $|F|\leq 2|V(F)|-1$ must hold for any $F\subseteq E(H)$,
implying that $(H,\psi)$ is $(2,3,1)$-gain-sparse.

Similarly, if $k=2$, then the kernel of $O_1(H,\psi,\tilde{\bp})$ has dimension at least two
(which corresponds to the space of infinitesimal translations), and hence
$(H,\psi)$ is $(2,3,2)$-gain-sparse.
\end{proof}

Note that Lemma~\ref{lem:rotation} also shows how the space of infinitesimal translations is decomposed. This decomposition can also be read off from the character tables for the groups $\mathcal{C}_k$ (see \cite{altherz,bishop}, for example).


\subsubsection{Case of ${\cal C}_2$}

Combining Theorem~\ref{thm:construction}, Lemma~\ref{lem:extension},
Theorem~\ref{thm:symmetry_rotation}, and Lemma~\ref{lem:rotation},
we obtain the following characterization of infinitesimally rigid frameworks with ${\cal C}_2$ symmetry.
The proof is identical to that for ${\cal C}_s$ and hence is omitted.

\begin{theorem}
\label{thm:antisymmetry_rotation2}
Let $\tau:\mathbb{Z}/2\mathbb{Z} \rightarrow {\cal C}_2$ be
a faithful representation,
$(H,\psi)$ be a $\mathbb{Z}/2\mathbb{Z}$-gain graph,
and $\tilde{\bp}:V(H)\rightarrow \mathbb{R}^2$ be ${\cal C}_2$-regular.
Then $O_1(H,\psi,\tilde{\bp})$ is row independent if and only if
$(H,\psi)$ is $(2,3,2)$-gain-sparse.
\end{theorem}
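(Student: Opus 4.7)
The plan is to mirror the proof of Theorem~\ref{thm:antisymmetry_reflection} almost word-for-word, replacing the reflection action with the half-turn $\tau(1)=-I$. The necessity direction costs nothing extra: it is precisely the final clause of Lemma~\ref{lem:rotation}, which states that for $k=2$, row independence of $O_1(H,\psi,\tilde{\bp})$ forces $(H,\psi)$ to be $(2,3,2)$-gain-sparse. Internally, that clause combines the switching argument of Propositions~\ref{prop:switching} and \ref{prop:induced_subgroup}, which handles any balanced $F\subseteq E(H)$ by reducing the $F$-rows of $O_1$ to a standard planar rigidity matrix (yielding $|F|\leq 2|V(F)|-3$), with the fact that both vectors $\tilde{\bmm}(\tv)\equiv (1,\sqrt{-1})^{\top}$ and $\tilde{\bmm}(\tv)\equiv (1,-\sqrt{-1})^{\top}$ lie in the kernel of $O_1$ (since $\bar{\omega}^{1}\tau(1)=(-1)(-I)=I$), which forces $|F|\leq 2|V(F)|-2$ for arbitrary $F$.

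For sufficiency, I would induct on $|V(H)|$. After the standard reduction to the extremal case $|E(H)|=2|V(H)|-2$, the base case $|V(H)|=1$ is vacuous. For the inductive step, Theorem~\ref{thm:construction} (with $m=2$) provides a sequence of 0-extensions, 1-extensions, and loop-1-extensions that builds $(H,\psi)$ from an edgeless single vertex. Lemma~\ref{lem:extension} preserves row independence of $O_1$ at every 0-extension and 1-extension step, and does so for a loop-1-extension as well, provided the new loop is not a zero loop of $O_1$.

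The one subtlety is therefore the loop-1-extension. For $\mathcal{C}_2$ and $j=1$, every non-trivial loop is a zero loop by Proposition~\ref{lem:zero_loop} (since $\rho_1(1)=-1$ and $\tau(1)^2=I$), so such an extension would in principle destroy row independence. However, $(2,3,2)$-gain-sparsity forbids loops outright: a balanced loop $\te$ would violate $|\{\te\}|\leq 2\cdot 1-3=-1$, and an unbalanced loop would violate $|\{\te\}|\leq 2\cdot 1-2=0$. Consequently no intermediate graph in the construction sequence of a $(2,3,2)$-gain-sparse graph contains a loop, loop-1-extensions are never invoked, and the induction closes. This loop-exclusion observation is the only place where the half-turn case needs any verification beyond the reflection proof; the rest of the argument is literally the same.
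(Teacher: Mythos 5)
Your proof is correct and takes essentially the same approach as the paper: the paper simply states that the proof is identical to that of Theorem~\ref{thm:antisymmetry_reflection}, combining Theorem~\ref{thm:construction}, Lemma~\ref{lem:extension}, and the $k=2$ clause of Lemma~\ref{lem:rotation}, exactly as you do. Your observation that $(2,3,2)$-gain-sparsity forbids loops, so that loop-$1$-extensions never arise in the construction sequence and the zero-loop caveat of Lemma~\ref{lem:extension} is moot, is a genuine gap-filling detail that the paper leaves implicit (note it is equally needed in the $\mathcal{C}_s$ case of Theorem~\ref{thm:antisymmetry_reflection}, since there too every loop is a zero loop in $O_1$, so it is not specific to the half-turn).
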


\begin{theorem}
\label{thm:rotation2}
Let $\tau:\mathbb{Z}/2\mathbb{Z}\rightarrow {\cal C}_2$ be a faithful representation,
$G$ be a $\mathbb{Z}/2\mathbb{Z}$-symmetric graph with $\theta:\mathbb{Z}/2\mathbb{Z}\rightarrow {\rm Aut}(G)$,
and $(G,\bp)$ be a ${\cal C}_2$-regular  framework with respect to $\theta$ and $\tau$.
Then the rank of $R(G,\bp)$ is equal to the sum of
the rank of ${\cal M}_{2,3,1}(H,\psi)$ and that of ${\cal M}_{2,3,2}(H,\psi)$,
where $(H,\psi)$ denotes  the quotient gain graph.
\end{theorem}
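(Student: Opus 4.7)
The plan is to mirror the argument used for Theorem~\ref{thm:reflection}, since all the required pieces are already in place for $\Gamma=\mathbb{Z}/2\mathbb{Z}$ acting as $\mathcal{C}_2$. First, because the rank of the rigidity matrix is maximal on the open dense set of $\mathcal{C}_2$-regular realizations and attained in particular on $\mathcal{C}_2$-generic realizations, I may assume without loss of generality that $\bp$ is $\mathcal{C}_2$-generic. Then the quotient $\tilde{\bp}$ is $\mathcal{C}_2$-generic in the sense required by Propositions~\ref{prop:cyclic}, \ref{prop:switching} and the characterization theorems.

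Next, I invoke the block decomposition from Section~\ref{sec:blockdia}. Since $\mathcal{C}_2=\{id,C_2\}$ has exactly two irreducible representations $\rho_0$ and $\rho_1$, equation~(\ref{rigblocks}) yields
\[
{\rm rank}\ R(G,\bp)={\rm rank}\ \widetilde{R}_0(G,\bp)+{\rm rank}\ \widetilde{R}_1(G,\bp).
\]
Applying Proposition~\ref{prop:cyclic} to each summand (which is legitimate since $\theta$ acts freely on $V(G)$, while any edges fixed by $C_2$ are automatically handled by Proposition~\ref{lem:zero_loop} in Section~\ref{sec:nonfree}), I can rewrite this as
\[
{\rm rank}\ R(G,\bp)={\rm rank}\ O_0(H,\psi,\tilde{\bp})+{\rm rank}\ O_1(H,\psi,\tilde{\bp}).
\]

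Finally, to identify these two ranks with the matroid ranks, I apply Theorem~\ref{thm:symmetry_rotation} (with $k=2$) to conclude that an edge set of $H$ is row-independent in $O_0(H,\psi,\tilde{\bp})$ if and only if it is $(2,3,1)$-gain-sparse, so the orbit-rigidity matroid associated to $O_0$ coincides with $\mathcal{M}_{2,3,1}(H,\psi)$; and I apply Theorem~\ref{thm:antisymmetry_rotation2} to conclude similarly that the orbit-rigidity matroid associated to $O_1$ coincides with $\mathcal{M}_{2,3,2}(H,\psi)$. Since in both cases row-independence is characterized by the matroid sparsity condition, the ranks of the orbit matrices coincide with the ranks of the respective matroids, and summing the two equalities gives the statement.

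There is no real obstacle here; this theorem is essentially a corollary obtained by assembling Proposition~\ref{prop:cyclic}, Theorem~\ref{thm:symmetry_rotation}, and Theorem~\ref{thm:antisymmetry_rotation2}. The genuine combinatorial content lies in Theorem~\ref{thm:antisymmetry_rotation2} (whose proof in turn rests on Theorem~\ref{thm:construction} and Lemma~\ref{lem:extension}); once that is granted, Theorem~\ref{thm:rotation2} reduces to bookkeeping in the block decomposition of the rigidity matrix.
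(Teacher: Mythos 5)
Your proposal is correct and follows essentially the same route as the paper: indeed, the paper simply states that the proof of Theorem~\ref{thm:rotation2} is identical to that of Theorem~\ref{thm:reflection}, which proceeds exactly as you outline — reduce to the $\mathcal{C}_2$-generic case, apply the block decomposition~(\ref{rigblocks}) together with Proposition~\ref{prop:cyclic} to split ${\rm rank}\,R(G,\bp)$ into ${\rm rank}\,O_0 + {\rm rank}\,O_1$, and then invoke Theorems~\ref{thm:symmetry_rotation} and~\ref{thm:antisymmetry_rotation2} to identify the two orbit-matrix ranks with the ranks of $\mathcal{M}_{2,3,1}$ and $\mathcal{M}_{2,3,2}$. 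Your additional remark about zero loops (Proposition~\ref{lem:zero_loop}) covering the case when $\theta$ is not free on $E(G)$ is a sensible extra sanity check but does not change the argument.
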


\begin{corollary}
\label{cor:rot2}
Let $\tau:\mathbb{Z}/2\mathbb{Z} \rightarrow {\cal C}_2$ be a faithful representation,
$G$ be a $\mathbb{Z}/2\mathbb{Z}$-symmetric graph with $\theta:\mathbb{Z}/2\mathbb{Z}\rightarrow {\rm Aut}(G)$,
and $(G,\bp)$ be a ${\cal C}_2$-regular  framework with respect to $\theta$ and $\tau$.
Then $(G,\bp)$ is infinitesimally rigid if and only if
the quotient gain graph $(H,\psi)$ satisfies $|E(H)|\geq 2|V(H)|-1$ and contains a spanning subgraph $(H',\psi')$
which is $(2,3,2)$-gain-sparse with $|E(H')|=2|V(H')|-2$.
\end{corollary}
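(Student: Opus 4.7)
My plan is to mirror the proof of Corollary~\ref{cor:reflection} verbatim, with Theorem~\ref{thm:rotation2} playing the role of Theorem~\ref{thm:reflection}. First I would invoke Theorem~\ref{thm:rotation2} to write
\[\mathrm{rank}\,R(G,\bp) = \mathrm{rank}\,\mathcal{M}_{2,3,1}(H,\psi) + \mathrm{rank}\,\mathcal{M}_{2,3,2}(H,\psi).\]
Since $\theta$ acts freely on $V(G)$ we have $|V(G)| = 2|V(H)|$, so infinitesimal rigidity of $(G,\bp)$ amounts to $\mathrm{rank}\,R(G,\bp) = 2|V(G)|-3 = 4|V(H)|-3$. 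The general sparsity condition $|F| \leq 2|V(F)|-m$ from the definition of $(2,3,m)$-gain-sparsity yields the a priori upper bounds $\mathrm{rank}\,\mathcal{M}_{2,3,m}(H,\psi) \leq 2|V(H)|-m$ for $m=1,2$, and these two bounds sum to exactly $4|V(H)|-3$. Hence infinitesimal rigidity is equivalent to both matroids simultaneously attaining their maximum rank.

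Next I would translate each maximality condition into a combinatorial statement. By the definition of matroid rank, $\mathrm{rank}\,\mathcal{M}_{2,3,2}(H,\psi) = 2|V(H)|-2$ is equivalent to the existence of a spanning $(2,3,2)$-gain-sparse subgraph $(H',\psi')$ with $|E(H')| = 2|V(H')|-2$, which is the second condition of the corollary. Similarly, $\mathrm{rank}\,\mathcal{M}_{2,3,1}(H,\psi) = 2|V(H)|-1$ is equivalent to the existence of a spanning $(2,3,1)$-gain-sparse subgraph with $2|V(H)|-1$ edges; the assumption $|E(H)| \geq 2|V(H)|-1$ is clearly necessary for this.

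The core of the sufficiency direction is the same ``observation'' used in the proof of Corollary~\ref{cor:reflection}: given a spanning $H'$ as above, any edge $e \in E(H) \setminus E(H')$ (which exists by the edge-count assumption) extends $H'$ to a spanning $(2,3,1)$-gain-sparse subgraph $H'+e$ with $2|V(H)|-1$ edges. The hard part will be verifying this extension property. The general condition $|F| \leq 2|V(F)|-1$ for $F \subseteq E(H') \cup \{e\}$ is routine from the tight $(2,3,2)$-count on $H'$. The delicate point is the balanced condition $|F| \leq 2|V(F)|-3$ for balanced $F \ni e$: here $F \setminus \{e\}$ is a balanced subset of $H'$, so by $(2,3,2)$-gain-sparsity of $H'$ one has $|F \setminus \{e\}| \leq 2|V(F \setminus \{e\})| - 3$, and a short case analysis on whether $e$ contributes new vertices to $V(F \setminus \{e\})$ closes the bound. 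Combining this observation with the matroid rank equivalences gives the corollary.
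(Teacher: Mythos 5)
Your proposal correctly mirrors the paper's proof of Corollary~\ref{cor:reflection}, which is precisely what the paper does for Corollary~\ref{cor:rot2} (``The proof is identical to that for ${\cal C}_s$''). You also identify the correct decomposition via Theorem~\ref{thm:rotation2} and the maximum-rank counts for ${\cal M}_{2,3,1}$ and ${\cal M}_{2,3,2}$. However, the ``short case analysis'' you appeal to in the final step does not close the bound, and this is a genuine gap. When $e$ contributes no new vertices to $V(F\setminus\{e\})$, the $(2,3,2)$-sparsity of $H'$ only gives $|F\setminus\{e\}| \le 2|V(F\setminus\{e\})|-3 = 2|V(F)|-3$, hence $|F| = |F\setminus\{e\}|+1 \le 2|V(F)|-2$; this is one more than the $2|V(F)|-3$ that $(2,3,1)$-gain-sparsity requires for balanced $F$. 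Nothing in the $(2,3,2)$-tightness of $H'$ rules out that $F\setminus\{e\}$ is a tight balanced subset of $H'$ and that $F$ remains balanced after adding $e$, since $e$ lies outside $E(H')$.

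This failure is not merely a gap in the write-up; the one-line ``Observe that\dots'' extension step in the paper's proof of Corollary~\ref{cor:reflection} is itself false as stated. Take $\Gamma=\mathbb{Z}/2\mathbb{Z}$, $V(H)=\{a,b,c,d\}$, and let $E(H)$ consist of the six edges of $K_4$ all with gain $0$ together with one extra parallel edge on $\{a,d\}$ with gain $1$. Then $H'$, obtained by deleting the gain-$0$ edge on $\{a,d\}$, is a spanning $(2,3,2)$-gain-tight subgraph, and $|E(H)|=7=2|V(H)|-1$, so both conditions of Corollary~\ref{cor:rot2} hold. Yet $E(H)$ contains the balanced $K_4$ with $6>2\cdot 4-3$ edges, so no $(2,3,1)$-gain-sparse subset of $E(H)$ has size $2|V(H)|-1$, i.e.\ $\mathrm{rank}\,{\cal M}_{2,3,1}(H,\psi)=6$. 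Theorem~\ref{thm:rotation2} then yields $\mathrm{rank}\,R(G,\bp)=6+6=12<2|V(G)|-3=13$; indeed the covering graph is two disjoint copies of $K_4$ joined by two bars, which is not $2$-rigid. So the purely local extension argument cannot be made to work, and the sufficiency direction really needs the two-subgraph condition (a spanning $(2,3,1)$-gain-tight subgraph \emph{and} a spanning $(2,3,2)$-gain-tight subgraph) that appears just before the ``Observe that'' step, rather than only the $(2,3,2)$-subgraph plus an edge count.
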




\subsubsection{Case of ${\cal C}_3$}

\begin{theorem}
\label{thm:rotation3}
Let $\tau:\mathbb{Z}/3\mathbb{Z}\rightarrow {\cal C}_3$ be a faithful representation,
$G$ be a $\mathbb{Z}/3\mathbb{Z}$-symmetric graph with $\theta:\mathbb{Z}/3\mathbb{Z}\rightarrow {\rm Aut}(G)$,
and $(G,\bp)$ be a ${\cal C}_3$-regular framework with respect to $\theta$ and $\tau$.
Then the rank of $R(G,\bp)$ is equal to three times
the rank of ${\cal M}_{2,3,1}(H,\psi)$,
where $(H,\psi)$ denotes  the quotient gain graph.
\end{theorem}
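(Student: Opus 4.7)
The plan. By the block-diagonalization (\ref{rigblocks}) together with Proposition~\ref{prop:abelian}, the rigidity matrix splits into three blocks indexed by the irreducible representations $\rho_0,\rho_1,\rho_2$ of $\mathcal{C}_3$, and
\[
\text{rank}\, R(G,\bp) \;=\; \sum_{j=0}^{2} \text{rank}\, O_j(H,\psi,\tilde{\bp}).
\]
The theorem thus reduces to showing
$\text{rank}\, O_j(H,\psi,\tilde{\bp}) = \text{rank}\, \mathcal{M}_{2,3,1}(H,\psi)$
for each $j \in \{0,1,2\}$. The case $j=0$ is exactly Theorem~\ref{thm:symmetry_rotation}, so my task is to handle the two non-trivial blocks $j=1$ and $j=2$. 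My goal is to prove that in each case the row matroid of $O_j$ coincides with $\mathcal{M}_{2,3,1}(H,\psi)$.

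The upper containment — if $F \subseteq E(H)$ indexes a row-independent submatrix of $O_j$, then $F$ is $(2,3,1)$-gain-sparse — is already supplied by Lemma~\ref{lem:rotation} (applied with $k=3$ and $j \in \{1,k-1\}=\{1,2\}$). For the reverse containment I would prove the following key subclaim by induction on $|V(H)|$: if $(H,\psi)$ is $(2,3,1)$-gain-sparse with $|E(H)|=2|V(H)|-1$, then $O_j(H,\psi,\tilde{\bp})$ is row-independent for every $\mathcal{C}_3$-regular $\tilde{\bp}$. By Theorem~\ref{thm:construction} with $m=1$, such a graph is obtained from a single vertex carrying one unbalanced loop (of gain $C_3$ or $C_3^2$) by a sequence of $0$-extensions, $1$-extensions, and loop-$1$-extensions. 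For the base case, Proposition~\ref{lem:zero_loop} shows that the single loop gives a nonzero row: indeed a loop $\tilde{e}$ is a zero row in $O_j$ only when $\psi_{\tilde{e}}^2=id$, and $\mathcal{C}_3$ has no element of order $2$. The inductive step is precisely Lemma~\ref{lem:extension}, whose only caveat is that loop-$1$-extensions could introduce a zero loop — which, by the same parity argument, never happens in $\mathcal{C}_3$ for $j=1,2$. This establishes the subclaim.

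From the subclaim I would deduce the lower bound $\text{rank}\, O_j \geq \text{rank}\, \mathcal{M}_{2,3,1}(H,\psi)$ in general as follows. Let $F$ be a maximum $(2,3,1)$-gain-sparse subset of $E(H)$. The rows of $O_j(H,\psi,\tilde{\bp})$ indexed by $F$ only use the columns over $V(F)$, so their rank equals the rank of $O_j(H[F],\psi|_F,\tilde{\bp}|_{V(F)})$. If $|F|=2|V(F)|-1$, the subclaim gives row-independence directly. If $|F|<2|V(F)|-1$, I would extend $(H[F],\psi|_F)$ to a $(2,3,1)$-gain-sparse gain graph $(H'',\psi'')$ on the same vertex set with exactly $2|V(F)|-1$ edges (for instance by adjoining unbalanced loops, or any further edges whose addition preserves sparsity, which is possible because $\mathcal{M}_{2,3,1}$ on the complete gain graph over $V(F)$ has rank $2|V(F)|-1$). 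Applying the subclaim to $(H'',\psi'')$ and restricting the resulting row-independent matrix to the rows of $F$ finishes the lower bound. Combining the two bounds gives $\text{rank}\, O_j = \text{rank}\, \mathcal{M}_{2,3,1}(H,\psi)$ for $j=1,2$, and summing over $j=0,1,2$ yields the theorem.

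Main obstacle. The substantive point of the argument is entirely in the inductive step of the subclaim, namely ensuring that the extensions of Lemma~\ref{lem:extension} really do preserve row-independence of the $\rho_j$-orbit matrix. This in turn rests on the fact that $\mathcal{C}_3$ has no element of order~$2$, so no loop produces a zero row in $O_j$ for any $j$. This is the precise feature that makes $\mathcal{C}_3$ behave uniformly across all three representations (giving the clean factor of $3$ in the statement) and is also the feature that breaks down for cyclic groups $\mathcal{C}_k$ with $k \geq 4$, where the anti-symmetric blocks generally require more refined gain-sparsity conditions.
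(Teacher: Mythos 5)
Your proposal is correct and follows essentially the same route as the paper: decompose via Proposition~\ref{prop:cyclic}, handle $j=0$ by Theorem~\ref{thm:symmetry_rotation}, obtain the upper bound for $j=1,2$ from Lemma~\ref{lem:rotation}, and prove the lower bound by induction over the construction sequence of Theorem~\ref{thm:construction} using Lemma~\ref{lem:extension}, with the crucial observation (via Proposition~\ref{lem:zero_loop}) that $\mathcal{C}_3$ has no element of order two and hence no zero loops can arise. Your treatment of the non-tight case --- first extending a maximum $(2,3,1)$-gain-sparse subset to a tight gain graph and then restricting rows --- spells out a step the paper leaves implicit, but it is the same argument.
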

\begin{proof}
We show that for each $j=1,2$, $O_j(H,\psi,\tilde{\bp})$ is row independent if and only if
$(H,\psi)$ is $(2,3,1)$-gain-sparse.
This implies the statement, by Proposition~\ref{prop:cyclic}  and Theorem~\ref{thm:symmetry_rotation}.

By Lemma~\ref{lem:rotation}, if $O_j(H,\psi,\tilde{\bp})$ is row independent,
$(H,\psi)$ is $(2,3,1)$-gain-sparse.

We show the converse direction by induction on $|V(H)|$.
Suppose $(H,\psi)$ is $(2,3,1)$-gain-sparse.
Proposition~\ref{lem:zero_loop} implies that an unbalanced loop is a zero loop in $O_j(H,\psi,\tilde{\bp})$ only if
the underlying group contains a subgroup isomorphic to $\mathbb{Z}/2\mathbb{Z}$.
Hence, in this case, a loop cannot be a zero loop, which in particular implies that
$O_j(H,\psi,\tilde{\bp})$ is row independent when $|V(H)|=1$.
If $|V(H)|>1$, then we can construct $\tilde{\bp}:V(H)\rightarrow \mathbb{R}^2$ such that
$(H,\psi,\tilde{\bp})$ is row independent by induction, using
Theorem~\ref{thm:construction} and Lemma~\ref{lem:extension}.
\end{proof}

As a corollary, we obtain the following characterization given in \cite{BS3}.
\begin{corollary}[Schulze \cite{BS3}]
\label{cor:rotation3}
Let $\tau:\mathbb{Z}/3\mathbb{Z}\rightarrow {\cal C}_3$ be a faithful representation,
$G$ be a $\mathbb{Z}/3\mathbb{Z}$-symmetric graph with $\theta:\mathbb{Z}/3\mathbb{Z}\rightarrow {\rm Aut}(G)$,
and $(G,\bp)$ be a ${\cal C}_3$-regular  framework with respect to $\theta$ and $\tau$.
Then $(G,\bp)$ is infinitesimally rigid if and only if
the quotient gain graph $(H,\psi)$ contains a spanning subgraph $(H',\psi')$
which is $(2,3,1)$-gain sparse with $|E(H')|=2|V(H')|-1$.
\end{corollary}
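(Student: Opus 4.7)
The plan is to derive the corollary directly from Theorem~\ref{thm:rotation3} via a routine rank computation. First, I would recall the standard characterization from Section~2: a framework $(G,\bp)$ on $n=|V(G)|$ vertices in $\mathbb{R}^2$ is infinitesimally rigid if and only if ${\rm rank}\ R(G,\bp)=2n-3$, with the ``$K_n$ with affinely independent points'' exception only being relevant when $n\le 2$, which is excluded here since the free action of $\mathbb{Z}/3\mathbb{Z}$ on $V(G)$ forces $|V(G)|=3|V(H)|\ge 3$. Substituting $n=3|V(H)|$, infinitesimal rigidity of $(G,\bp)$ becomes equivalent to ${\rm rank}\ R(G,\bp)=6|V(H)|-3$.

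Next, I would invoke Theorem~\ref{thm:rotation3}, which gives ${\rm rank}\ R(G,\bp)=3\cdot {\rm rank}\ {\cal M}_{2,3,1}(H,\psi)$. Combining this with the previous display, the corollary reduces to the single matroidal equivalence
\[
{\rm rank}\ {\cal M}_{2,3,1}(H,\psi)=2|V(H)|-1.
\]
By the very definition of ${\cal M}_{2,3,1}(H,\psi)$ (Section~\ref{sec:gain_sparsity}), an edge set $F\subseteq E(H)$ is independent if and only if the subgraph it induces is $(2,3,1)$-gain-sparse, so the rank of the matroid coincides with the maximum cardinality of a $(2,3,1)$-gain-sparse subset of $E(H)$. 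Thus the above rank equality holds precisely when $(H,\psi)$ contains a spanning subgraph $(H',\psi')$ (i.e.\ with $V(H')=V(H)$, so $|V(H')|=|V(H)|$) that is $(2,3,1)$-gain-sparse with $|E(H')|=2|V(H')|-1$, which is exactly the combinatorial condition stated in the corollary.

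Because the argument is purely bookkeeping on top of Theorem~\ref{thm:rotation3}, there is no serious obstacle. The only points worth double-checking are the base case $|V(H)|=1$, where the condition forces $H$ to consist of a single unbalanced loop (which lifts to the three edges of a triangle in $G$, and such a triangle is indeed infinitesimally rigid under $\mathcal{C}_3$-regularity since its vertices cannot be collinear), and the verification that the affine-independence clause in the standard rank criterion does not enter the argument for $|V(G)|\ge 3$. Both are immediate, so the corollary follows.
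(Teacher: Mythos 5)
Your derivation is correct and is essentially the (unwritten) argument the paper has in mind: the corollary follows from Theorem~\ref{thm:rotation3} by dividing by three and identifying the rank of ${\cal M}_{2,3,1}(H,\psi)$ with the maximum size of a $(2,3,1)$-gain-sparse edge set, which, when it reaches $2|V(H)|-1$, is automatically spanning by the sparsity count $|F|\le 2|V(F)|-1$. The separate base-case check for $|V(H)|=1$ is harmless but unnecessary, since Theorem~\ref{thm:rotation3} already covers that case.
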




\subsubsection{Case of ${\cal C}_k$ with $k\geq 4$}

The following lemma gives a necessary condition for the row independence of $O_j(H,\psi,\tilde{\bp})$ for even $k$,
which  is  stronger than the one given in Lemma~\ref{lem:rotation}.

\begin{lemma}
\label{lem:neccessary_even}
Let $k\geq 4$,
$\tau:\mathbb{Z}/k\mathbb{Z}\rightarrow {\cal C}_k$ be a faithful representation,
 $(H,\psi)$ be a $\mathbb{Z}/k\mathbb{Z}$-gain graph,
$\tilde{\bp}:V(H)\rightarrow \mathbb{R}^2$ be ${\cal C}_k$-regular,
and $j$ be an odd integer with $1\leq j<k$.
If $O_{j}(H,\psi,\tilde{\bp})$ is row independent,
then $F$ is $(2,3,2)$-gain-sparse for any $F\subseteq E(H)$ such that $\langle F\rangle$ is isomorphic to
$\mathbb{Z}/2\mathbb{Z}$.
\end{lemma}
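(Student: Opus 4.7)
The plan is to reduce the claim to the $\mathcal{C}_2$ case and then invoke Theorem~\ref{thm:antisymmetry_rotation2}. First observe that the hypothesis $\langle F\rangle\cong\mathbb{Z}/2\mathbb{Z}$ forces $k$ to be even (otherwise the statement is vacuous) and identifies $\langle F\rangle$ with the unique order-$2$ subgroup $\{0,k/2\}$ of $\mathbb{Z}/k\mathbb{Z}$. The crucial numerical observation is that for odd $j$ we have $\rho_j(k/2)=\omega^{jk/2}=e^{\pi i j}=-1$, while faithfulness of $\tau$ forces $\tau(k/2)$ to be the half-turn $-I$. Hence $\tau$ and $\rho_j$, restricted to $\{0,k/2\}$, agree with the faithful representation $\tau':\mathbb{Z}/2\mathbb{Z}\to\mathcal{C}_2$ and with the non-trivial one-dimensional irreducible representation $\rho_1$ of $\mathbb{Z}/2\mathbb{Z}$, respectively. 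Without loss of generality I may also take $\tilde{\bp}$ to be $\mathcal{C}_k$-generic, since row independence of the orbit rigidity matrix is preserved when specializing from a regular realization to a generic one.

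Next, using Proposition~\ref{prop:induced_subgroup} I pick a maximal forest $T\subseteq F$ and switch $\psi$ to an equivalent gain function $\psi'$ with $\psi'(\te)=id$ for every $\te\in T$; then $\psi'(\te)\in\langle F\rangle=\{0,k/2\}$ for every edge $\te\in F\setminus T$ as well. By Proposition~\ref{prop:switching}, $O_j(H,\psi',\tilde{\bp})$ is still row independent, and since the notion of balancedness (and hence $(2,3,2)$-gain-sparsity) is invariant under switching, it is enough to show $F$ is $(2,3,2)$-gain-sparse as a subgraph of $(H,\psi')$. Let $M$ be the submatrix of $O_j(H,\psi',\tilde{\bp})$ whose rows are indexed by $F$; each such row is supported on the $2|V(F)|$ columns indexed by $V(F)$. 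Comparing the formula defining $O_j$ with that defining the $\mathbb{Z}/2\mathbb{Z}$-orbit rigidity matrix, the paragraph above shows that $M$ coincides entry-wise with $O_1(H_F,\psi'|_F,\tilde{\bp}|_{V(F)})$, where $H_F=(V(F),F)$ is regarded as a $\mathbb{Z}/2\mathbb{Z}$-gain graph under the identification $\{0,k/2\}\cong\mathbb{Z}/2\mathbb{Z}$.

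Since $M$ is a row submatrix of the row-independent matrix $O_j(H,\psi',\tilde{\bp})$, it is itself row independent, and $\tilde{\bp}|_{V(F)}$ is $\mathcal{C}_2$-generic because $\mathbb{Q}_{\mathcal{C}_2}=\mathbb{Q}\subseteq\mathbb{Q}_{\mathcal{C}_k}$. The necessity direction of Theorem~\ref{thm:antisymmetry_rotation2} then yields that $(H_F,\psi'|_F)$ is $(2,3,2)$-gain-sparse, which, by switching invariance, gives the required conclusion for $F$ inside the original $(H,\psi)$. The only non-routine step in the proof is the entry-wise identification $M=O_1(H_F,\psi'|_F,\tilde{\bp}|_{V(F)})$; this reduces to verifying that $\tau(\gamma)$ and $\rho_j(\gamma)$ take exactly the values $\tau'(\gamma)$ and $\rho_1(\gamma)$ for $\gamma\in\{0,k/2\}$, which is immediate from the parity of $j$ together with the faithfulness of $\tau$. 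The rest of the argument is a straightforward application of results already established in the paper.
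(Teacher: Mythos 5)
Your proof is correct and follows essentially the same route as the paper: restrict $\psi$ via switching to take values in the order-two subgroup $\{0,k/2\}$, observe that $\rho_j(k/2)=-1$ for odd $j$ while $\tau(k/2)=-I$, and thereby identify the submatrix of $O_j$ indexed by $F$ with a $\mathcal{C}_2$ anti-symmetric orbit rigidity matrix. The only cosmetic difference is that you conclude by invoking Theorem~\ref{thm:antisymmetry_rotation2}, whereas the paper cites the lighter Lemma~\ref{lem:rotation} (whose $k=2$ clause is exactly the necessity direction you use); your extra remark that $\mathbb{Q}_{\mathcal{C}_2}=\mathbb{Q}\subseteq\mathbb{Q}_{\mathcal{C}_k}$, so $\mathcal{C}_k$-genericity already gives $\mathcal{C}_2$-genericity of the restriction, is a worthwhile explicit justification of a step the paper leaves implicit.
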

\begin{proof}
Let $\omega=e^{\frac{2\pi \sqrt{-1}}{k}}$.
Since $\langle F\rangle$ is isomorphic to $\mathbb{Z}/2\mathbb{Z}$,
$\langle F\rangle$ consists of $\{0,k/2\}$.
Let $h:\{0,k/2\}\rightarrow \mathbb{Z}/2\mathbb{Z}$ be the isomorphism.

By Proposition~\ref{prop:switching} and Proposition~\ref{prop:induced_subgroup}, we may assume that
$\psi(\te)\in \{0,k/2\}$ for all $\te\in F$, and hence
we can define a gain function $\psi':F\rightarrow \mathbb{Z}/2\mathbb{Z}$ by
$\psi'(\te)=h(\psi(\te))$ for $\te\in F$.
Also, we can define $\tau':\mathbb{Z}/2\mathbb{Z}\rightarrow {\cal C}_2$
by $\tau'=\tau\circ h^{-1}$.

Observe that $\omega^{jk/2}=\omega^{k/2}=-1$ if $j$ is odd,
which implies $\omega^{j\psi(\te)}=(-1)^{\psi'(\te)}$ for $\te\in F$.
Therefore, for $\te=(\tilde{u},\tilde{v})\in F$, we have
\begin{align*}
\tilde{\bp}(\tilde{u})-\tau(\psi(\te))\tilde{\bp}(\tilde{v})&=\tilde{\bp}(\tilde{u})-\tau'(\psi'(\te))\tilde{\bp}(\tilde{v}) \\
\omega^{j\psi(\te)}(\tilde{\bp}(\tilde{v})-\tau(\psi(\te))^{-1}\tilde{\bp}(\tilde{u}))&=(-1)^{\psi'(\te)}(\tilde{\bp}(\tilde{v})-\tau'(\psi'(\te))^{-1}\tilde{\bp}(\tilde{u})).
\end{align*}
In other words, we have $O_j(H[F],\psi, \tilde{\bp})=O_1(H[F],\psi',\tilde{\bp})$,
where $H[F]$ is the subgraph of $H$ induced by the edge set $F$.
Since $(H[F],\psi')$ is a $\mathbb{Z}/2\mathbb{Z}$-gain graph along with a faithful representation
$\tau':\mathbb{Z}/2\mathbb{Z}\rightarrow {\cal C}_2$,
$F$ is $(2,3,2)$-gain-sparse by Lemma~\ref{lem:rotation}.
\end{proof}

It follows from this lemma that if $k$ is even, then
there is  a ${\cal C}_k$-generic  framework $(G,\bp)$ such that
the underlying graph is 2-rigid (i.e., generically rigid in the plane)
but $(G,\bp)$ is not infinitesimally rigid.
However, we still conjecture that Laman's condition characterizes infinitesimal rigidity for odd $k$.
\begin{conj}\label{con:oddk}
Let ${\cal C}_k$ be the group generated by a $k$-fold rotation in the plane, where $k$ is odd.
Let $(G,\bp)$ be a ${\cal C}_k$-generic   framework.
Then $(G,\bp)$ is infinitesimally rigid if and only if $G$ is 2-rigid.
\end{conj}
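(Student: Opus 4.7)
The plan is to apply the block decomposition from Proposition~\ref{prop:cyclic} and analyze the rank of each orbit rigidity matrix $O_j(H,\psi,\tilde{\bp})$ separately. The forward implication is immediate from upper semicontinuity of the rank function: if $(G,\bp)$ has rank $2|V(G)|-3$ at a $\mathcal{C}_k$-generic point, then at a fully generic (non-symmetric) realization of $G$ the rank is at least $2|V(G)|-3$ and at most the Laman bound $2|V(G)|-3$, so $G$ is $2$-rigid. For the reverse implication, the key observation is that for odd $k\geq 3$ the three-dimensional space of trivial infinitesimal motions decomposes cleanly over $\mathbb{C}$: the infinitesimal rotation lies in $\rho_0$, and the eigenvector computation~(\ref{eq:translation}) places the two independent infinitesimal translations in $\rho_1$ and $\rho_{k-1}$ respectively. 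Consequently, $(G,\bp)$ is infinitesimally rigid if and only if every $O_j(H,\psi,\tilde{\bp})$ attains its maximum possible rank, namely $2|V(H)|-1$ for $j\in\{0,1,k-1\}$ and $2|V(H)|$ for the remaining $k-3$ indices.

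The hard direction then splits into two tasks. First, I would extend Theorem~\ref{thm:rotation3} by proving that for $j\in\{1,k-1\}$ the row independence of $O_j(H,\psi,\tilde{\bp})$ at $\mathcal{C}_k$-generic $\tilde{\bp}$ is characterized by $(2,3,1)$-gain-sparsity of $(H,\psi)$: necessity is supplied by Lemma~\ref{lem:rotation}, and sufficiency would be established by induction on $|V(H)|$ via the extension operations of Theorem~\ref{thm:construction} combined with Lemma~\ref{lem:extension}. An advantage of odd $k$ is that $\mathbb{Z}/k\mathbb{Z}$ contains no involution, so by Proposition~\ref{lem:zero_loop} no unbalanced loop produces a zero row of $O_j$, and loop-$1$-extensions are always admissible in the induction. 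Second, I would seek an analogous characterization for $j\notin\{0,1,k-1\}$: necessity yields $(2,3,0)$-gain-sparsity (Lemma~\ref{lem:rotation}), and one would try to show, again by inductive construction, that a spanning $(2,3,0)$-gain-sparse subgraph with $2|V(H)|$ edges suffices for row independence.

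The main obstacle is two-fold. Analytically, the collection of $(2,3,0)$-gain-sparse edge sets does not in general form a matroid (as noted in the paper following Kir\'aly), so the clean matroidal framework used for the $(2,3,1)$ case cannot be transplanted wholesale; the inductive step for $O_j$ with $j\notin\{0,1,k-1\}$ must be executed directly on the row-independence structure of $O_j$ rather than through an abstract sparsity matroid. Combinatorially, and more fundamentally, one must show that the $2$-rigidity of the covering graph $G$ --- witnessed by some Laman subgraph of $G$ that need not be $\mathcal{C}_k$-symmetric --- forces $(H,\psi)$ to contain all of the required spanning subgraphs simultaneously: a $(2,3,1)$-gain-sparse spanning subgraph with $2|V(H)|-1$ edges, and, when $k\geq 5$, a $(2,3,0)$-gain-sparse spanning subgraph with $2|V(H)|$ edges. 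A plausible route is to symmetrize a Laman subgraph under the $\mathcal{C}_k$-action and exploit the edge-orbit arithmetic $k|E(H)|=|E(G)|\geq 2|V(G)|-3=2k|V(H)|-3$ to locate the required structures; the absence of involutions for odd $k$ is precisely what rules out the obstruction exhibited by Lemma~\ref{lem:neccessary_even} for even $k$, and is what makes this symmetrization step expected to succeed.
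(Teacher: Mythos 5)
This statement is Conjecture~1 in the paper; the paper does not prove it, so there is no proof to compare against. What you have submitted is a proof \emph{plan}, not a proof, and the two obstacles you flag at the end are precisely where the conjecture remains open.

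Your framework matches the paper's thinking: decompose the rigidity matrix via Proposition~\ref{prop:cyclic}; observe from the rank arithmetic $3(2|V(H)|-1)+(k-3)\cdot 2|V(H)|=2k|V(H)|-3$ that exactly three blocks ($\rho_0$ for the rotation, $\rho_1$ and $\rho_{k-1}$ for the translations) may carry a one-dimensional trivial kernel while the rest must have full column rank; and use the absence of involutions in $\mathbb{Z}/k\mathbb{Z}$ for odd $k$ to avoid both the zero-loop phenomenon (Proposition~\ref{lem:zero_loop}) and the obstruction of Lemma~\ref{lem:neccessary_even}. All of this is consistent with Theorem~\ref{thm:rotation3} (the $k=3$ case) and Lemma~\ref{lem:rotation}. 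However, two cautions. First, you posit $(2,3,0)$-gain-sparsity as the candidate sufficient condition for row independence of $O_j$ with $j\notin\{0,1,k-1\}$, but the paper proves it \emph{necessary} only (Lemma~\ref{lem:rotation}) and does not conjecture sufficiency; the remark that $(2,3,0)$-gain-sparse sets fail to form a matroid (Kir\'aly's observation) is a hint that the correct combinatorial condition may be genuinely different, not merely that the induction needs to be run by hand. Second, the ``symmetrization'' step you sketch --- extracting the required gain-sparse spanning subgraphs of $(H,\psi)$ from an arbitrary Laman subgraph of the cover $G$ --- is exactly where the paper indicates the difficulty lies: the balanced and unbalanced circuits described at the end of Section~\ref{sec:characterization} exhibit edge sets whose covers are $2$-dependent in ways not detected by naive sparsity counts, so the edge-orbit arithmetic alone will not produce the needed structure. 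In short, your plan is a faithful account of the natural strategy and correctly locates the gaps, but it does not fill them, and neither does the paper.
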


One possible approach for proving this conjecture is to develop a constructive
characterization of 2-rigid $\mathbb{Z}/k\mathbb{Z}$-symmetric graphs.
Since there is a one-to-one correspondence between $\mathbb{Z}/k\mathbb{Z}$-symmetric graphs
and $\mathbb{Z}/k\mathbb{Z}$-gain graphs
(up to the choices of representative vertices),
our task is to extend Theorem~\ref{thm:construction}.
In the following, we make several observations concerning Conjecture~\ref{con:oddk}.

\begin{theorem}
\label{thm:sparsity_relation}
Let $G$ be a $\mathbb{Z}/k\mathbb{Z}$-symmetric graph with odd $k\geq 3$
and $(H,\psi)$ be its quotient $\mathbb{Z}/k\mathbb{Z}$-gain graph.
If $(H,\psi)$ is $(2,3,1)$-gain-sparse, then $G$ is 2-independent.
\end{theorem}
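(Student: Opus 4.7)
The plan is to reduce $2$-independence of $G$ to simultaneous row independence of all orbit rigidity matrices $O_j(H,\psi,\tilde{\bp})$ at a ${\cal C}_k$-generic realization. First I would observe that since $k$ is odd the group $\mathbb{Z}/k\mathbb{Z}$ contains no nontrivial involution, so the action of $\Gamma$ on $E(G)$ is automatically free (giving $|E(G)|=k|E(H)|$) and, by Proposition~\ref{lem:zero_loop}, no loop of $(H,\psi)$ is a zero loop in any $O_j$. Using~(\ref{rigblocks}) and Proposition~\ref{prop:cyclic}, if I can exhibit a ${\cal C}_k$-symmetric realization $(G,\bp)$ at which every $O_j(H,\psi,\tilde{\bp})$ has row rank $|E(H)|$, then $\mathrm{rank}\,R(G,\bp)=k|E(H)|=|E(G)|$; since the rank of the rigidity matrix at a regular realization of $G$ is at least its rank at any specific realization, this would imply that $G$ is $2$-independent.

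Next I would enlarge $(H,\psi)$ on the same vertex set to a $(2,3,1)$-gain-sparse graph $(\tilde{H},\tilde{\psi})$ with $|E(\tilde{H})|=2|V(H)|-1$ and $\tilde{\psi}|_{E(H)}=\psi$. This is possible because ${\cal M}_{2,3,1}$ is a matroid attaining rank $2n-1$ on any vertex set of size $n$, as witnessed for instance by an identity-labelled spanning tree augmented by one unbalanced loop at every vertex; thus $E(H)$, being independent in ${\cal M}_{2,3,1}$, extends to a base on $V(H)$. Since $O_j(H,\psi,\tilde{\bp})$ is a row-submatrix of $O_j(\tilde{H},\tilde{\psi},\tilde{\bp})$, it suffices to establish row independence for the maximal graph $(\tilde{H},\tilde{\psi})$.

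Because $(\tilde{H},\tilde{\psi})$ is tight and $(2,3,1)$-gain-sparse, Theorem~\ref{thm:construction} realizes it as the output of a sequence of $0$-extensions, $1$-extensions, and loop-$1$-extensions starting from a single vertex with one unbalanced loop. I then argue by induction along this sequence, following the template of Theorem~\ref{thm:rotation3}: the base case is a $1\times 2$ matrix whose only row is nonzero at any ${\cal C}_k$-generic $\tilde{\bp}$ precisely because, for odd $k$, the loop is not a zero loop; and at every extension step Lemma~\ref{lem:extension} preserves row independence, the caveat about not introducing a zero loop during a loop-$1$-extension being again automatic by the same odd-$k$ observation. For each $j$ separately this produces a nonempty Zariski-open set of realizations at which $O_j$ has independent rows, and since a finite intersection of nonempty Zariski-open sets is nonempty---indeed contains every ${\cal C}_k$-generic $\tilde{\bp}$---I obtain a single realization making all $O_j$ simultaneously row independent, which completes the reduction.

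The step I expect to need the most care is the second one, namely confirming that a ${\cal M}_{2,3,1}$-base completing $E(H)$ can be chosen inside the class of $\mathbb{Z}/k\mathbb{Z}$-gain graphs on $V(H)$: one must verify that every $(2,3,1)$-gain-sparse edge set on $n$ vertices extends to a $(2n-1)$-edge one by admissible gain-labelled edges, i.e., that the rank of ${\cal M}_{2,3,1}$ on the ``complete'' $\mathbb{Z}/k\mathbb{Z}$-gain graph on $V(H)$ really is $2n-1$. Once this is granted, the odd-$k$ hypothesis reduces the rest of the argument to a direct transcription of the ${\cal C}_3$ proof of Theorem~\ref{thm:rotation3}.
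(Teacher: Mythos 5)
Your proposal is correct and follows essentially the same route as the paper: apply the inductive construction of Theorem~\ref{thm:construction}, use the odd-$k$ observation from Proposition~\ref{lem:zero_loop} to rule out zero loops, invoke Lemma~\ref{lem:extension} inductively to get full row rank of each $O_j$, and conclude via the block decomposition and Proposition~\ref{prop:cyclic}. You are in fact more careful than the paper's own (very terse) proof in two places the paper leaves implicit: the preliminary extension of the $(2,3,1)$-gain-sparse graph to a $(2,3,1)$-gain-tight one so that Theorem~\ref{thm:construction} can be applied, and the observation that genericity lets a single $\tilde{\bp}$ make all the $O_j$ simultaneously row independent.
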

\begin{proof}
By Theorem~\ref{thm:construction}, $(H,\psi)$ can be constructed
from a gain graph with one vertex with a loop with non-identity label by
0-extensions, 1-extensions, and loop-1-extensions.
Since $k$ is odd, Proposition~\ref{lem:zero_loop} implies that
a zero-loop does not occur.
Therefore, by Lemma~\ref{lem:extension}, there is an injective $\bp:V(G)\rightarrow \mathbb{R}^2$ such that
$(G,\bp)$ is ${\cal C}_k$-symmetric and $R(G,\bp)$ is row independent.
The row independence of $R(G,\bp)$ implies that $G$ is 2-independent.
\end{proof}

Theorem~\ref{thm:sparsity_relation} says that the covering graph of any $(2,3,1)$-gain-tight graph $(H,\psi)$ is
$2$-independent if $k$ is odd.
Since the covering graph $G$ has $k|E(H)|$ edges, which is equal to $k(2|V(H)|-1)=2|V(G)|-k$,
$G$ cannot be 2-rigid if $k>3$.
The next step is hence to investigate
which new edges we should add so that the covering graph is 2-rigid.
This question turns out to be complicated, as the following examples illustrate.


\begin{figure}[htp]
\begin{center}
\begin{tikzpicture}[very thick,scale=1]
\tikzstyle{every node}=[circle, draw=black, fill=white, inner sep=0pt, minimum width=5pt];
\node (p1) at (60:0.5cm) {};
\node (p2) at (132:0.5cm) {};
\node (p3) at (204:0.5cm) {};
\node (p4) at (276:0.5cm) {};
\node (p5) at (348:0.5cm) {};

\node (a1) at (60:1.5cm) {};
\node (a2) at (132:1.5cm) {};
\node (a3) at (204:1.5cm) {};
\node (a4) at (276:1.5cm) {};
\node (a5) at (348:1.5cm) {};

\node (r1) at (40:1.15cm) {};
\node (r2) at (112:1.15cm) {};
\node (r3) at (184:1.15cm) {};
\node (r4) at (256:1.15cm) {};
\node (r5) at (328:1.15cm) {};

\node (l1) at (85:1.3cm) {};
\node (l2) at (157:1.3cm) {};
\node (l3) at (229:1.3cm) {};
\node (l4) at (301:1.3cm) {};
\node (l5) at (13:1.3cm) {};

\draw(p1)--(a1);
\draw(p1)--(r1);
\draw(p1)--(l1);
\draw(l1)--(r1);
\draw(l1)--(a1);
\draw(a1)--(r1);

\draw(p2)--(a2);
\draw(p2)--(r2);
\draw(p2)--(l2);
\draw(l2)--(r2);
\draw(l2)--(a2);
\draw(a2)--(r2);

\draw(p3)--(a3);
\draw(p3)--(r3);
\draw(p3)--(l3);
\draw(l3)--(r3);
\draw(l3)--(a3);
\draw(a3)--(r3);

\draw(p4)--(a4);
\draw(p4)--(r4);
\draw(p4)--(l4);
\draw(l4)--(r4);
\draw(l4)--(a4);
\draw(a4)--(r4);

\draw(p5)--(a5);
\draw(p5)--(r5);
\draw(p5)--(l5);
\draw(l5)--(r5);
\draw(l5)--(a5);
\draw(a5)--(r5);

\node [draw=white, fill=white] (a) at (0,-2)  {(a)};

\path ([xshift=3cm,yshift=-1cm]p1) node (pp1){};
\path ([xshift=3cm,yshift=-1cm]a1) node (aa1){};
\path ([xshift=3cm,yshift=-1cm]r1) node (rr1){};
\path ([xshift=3cm,yshift=-1cm]l1) node (ll1){};


\draw(pp1)--(aa1);
\draw(pp1)--(rr1);
\draw(pp1)--(ll1);
\draw(ll1)--(rr1);
\draw(ll1)--(aa1);
\draw(aa1)--(rr1);


\node [draw=white, fill=white] (a) at (3.3,-2)  {(b)};
\end{tikzpicture}
\hspace{1cm}
\begin{tikzpicture}[very thick,scale=1]
\tikzstyle{every node}=[circle, draw=black, fill=white, inner sep=0pt, minimum width=5pt];
\node (p1) at (60:1.2cm) {};
\node (p2) at (132:1.2cm) {};
\node (p3) at (204:1.2cm) {};
\node (p4) at (276:1.2cm) {};
\node (p5) at (348:1.2cm) {};

\node (a1) at (30:1.5cm) {};
\node (a2) at (102:1.5cm) {};
\node (a3) at (174:1.5cm) {};
\node (a4) at (246:1.5cm) {};
\node (a5) at (318:1.5cm) {};

\node (r1) at (13:0.6cm) {};
\node (r2) at (85:0.6cm) {};
\node (r3) at (157:0.6cm) {};
\node (r4) at (229:0.6cm) {};
\node (r5) at (301:0.6cm) {};

\node (l1) at (85:0.6cm) {};
\node (l2) at (157:0.6cm) {};
\node (l3) at (229:0.6cm) {};
\node (l4) at (301:0.6cm) {};
\node (l5) at (13:0.6cm) {};

\draw(p1)--(a1);
\draw(p1)--(r1);
\draw(p1)--(l1);
\draw(l1)--(r1);
\draw(l1)--(a1);
\draw(a1)--(r1);

\draw(p2)--(a2);
\draw(p2)--(r2);
\draw(p2)--(l2);
\draw(l2)--(r2);
\draw(l2)--(a2);
\draw(a2)--(r2);

\draw(p3)--(a3);
\draw(p3)--(r3);
\draw(p3)--(l3);
\draw(l3)--(r3);
\draw(l3)--(a3);
\draw(a3)--(r3);

\draw(p4)--(a4);
\draw(p4)--(r4);
\draw(p4)--(l4);
\draw(l4)--(r4);
\draw(l4)--(a4);
\draw(a4)--(r4);

\draw(p5)--(a5);
\draw(p5)--(r5);
\draw(p5)--(l5);
\draw(l5)--(r5);
\draw(l5)--(a5);
\draw(a5)--(r5);

\node [draw=white, fill=white] (a) at (0,-2)  {(c)};

\path ([xshift=3cm,yshift=-0.5cm]p1) node (pp1){};
\path ([xshift=3cm,yshift=-0.5cm]a1) node (aa1){};
\path ([xshift=3cm,yshift=-0.5cm]r1) node (rr1){};

\draw(pp1)--(aa1);
\draw(pp1)--(rr1);
\draw(aa1)--(rr1);

\path
(rr1) edge [<-,bend left=22] (pp1);
\path
(rr1) edge [<-,bend right=22] (aa1);

 \path
(rr1) edge [loop below,->, >=stealth,shorten >=2pt,looseness=26] (rr1);

   \node [draw=white, fill=white] (a) at (4.4,-0.2)  {$\gamma$};
   \node [draw=white, fill=white] (a) at (3.2,-0.7)  {$\gamma$};
   \node [draw=white, fill=white] (a) at (3.2,0.2)  {$\gamma$};

   \node [draw=white, fill=white] (a) at (3.7,-2)  {(d)};
\end{tikzpicture}

\end{center}
\vspace{-0.2cm}
\caption{A balanced circuit (b) and its corresponding covering graph  (a).
Note that we may assume that the label of each edge is the identity, by Proposition~\ref{prop:induced_subgroup}.
An unbalanced circuit (d) and its corresponding covering graph (c).}
\label{fig:circuit}\end{figure}
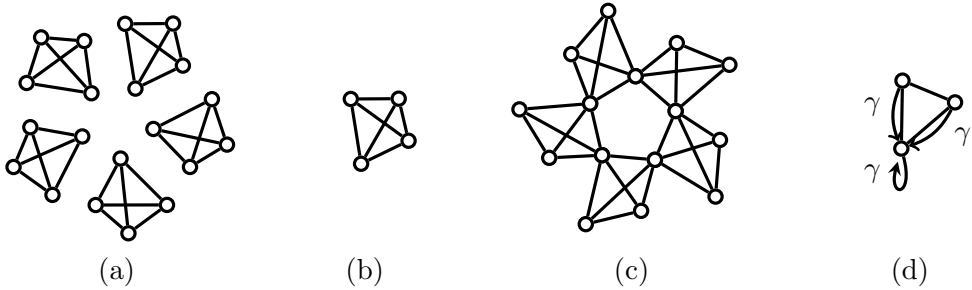

Let us consider a $\mathbb{Z}/k\mathbb{Z}$-gain graph $(H,\psi)$
which consists of a $(2,3,1)$-gain-tight graph $(H',\psi)$ together with one additional edge $\te$
(i.e., $H=H'+\te$).
The covering graph of $H$ and the covering map is denoted by $G$ and $c$, respectively.
It follows easily from Proposition~\ref{prop:induced_subgroup}
that if $(H,\psi)$ contains an edge set $F$ such that
(i) $F$ is balanced and (ii) $|F|>2|V(F)|-3$,
then $c^{-1}(F)$ consists of $k$ vertex-disjoint 2-dependent sets.
A minimal edge set $F$ satisfying (i) and (ii) is called a {\em balanced circuit}.
See Figure~\ref{fig:circuit}(a)(b) for an example.

There is another obstacle.
Suppose that there is an edge subset $F$
such that (i') $F$ is unbalanced, (ii') $|F|>2|V(F)|-1$, and
(iii') there are a vertex $\tilde{v}\in V(F)$, an element $\gamma\in \mathbb{Z}/k\mathbb{Z}$,
and a labeling function $\psi':E(H)\rightarrow \mathbb{Z}/k\mathbb{Z}$ equivalent to $\psi$ such that
$\psi'(\te)=id$ for every $\te\in F$ not incident to $\tilde{v}$,
and $\psi'(\te)\in \{id, \gamma\}$ for every $\te\in F$ directed to $\tilde{v}$
(assuming that every edge incident to $\tilde{v}$ is directed to $\tilde{v}$).
See also Figure~\ref{fig:circuit}(c)(d).
Then it can  easily be checked
that the covering graph $c^{-1}(F)$ is the union of $k$ edge-disjoint 2-dependent sets.
A minimal edge set $F$ satisfying (i')(ii')(iii') is called an {\em unbalanced circuit}.

Consequently, if $H=H'+\te$  contains an unbalanced circuit or a balanced circuit,
the covering graph $G$ contains $k$ edge-disjoint 2-dependent sets, which means that
no edge of $c^{-1}(\te)$ increases the rank of the 2-rigidity matroid of the covering graph.

\section{Extensions} \label{sec:ext}

We finish by making some further comments about `anti-symmetric' orbit rigidity matrices and their applications and by outlining some directions for future developments.

\subsection{Bar-joint frameworks in higher dimensions}

As we mentioned in the introduction, it is a key open problem in rigidity theory to find a combinatorial characterization of infinitesimally rigid generic bar-joint frameworks (without symmetry) in dimensions $3$ and higher. Therefore, we restricted attention to two-dimensional symmetric frameworks in Sections~\ref{sec:gain_sparsity} and \ref{sec:characterization}. However, note that we showed in Section~\ref{sec:block} how to construct anti-symmetric orbit rigidity matrices for a symmetric framework in an \emph{arbitrary} dimension $d$. 

Each of these anti-symmetric orbit rigidity matrices gives rise to an independent set of necessary  conditions for the framework to be infinitesimally rigid in $\mathbb{R}^d$. Analogously to the conditions derived for various symmetric two-dimensional frameworks in Section~\ref{sec:characterization}, these conditions can of course be expressed as gain-sparsity conditions for the corresponding quotient gain-graph. However, to state these conditions, we need to compute the dimension of the space of trivial infinitesimal motions which are symmetric with respect to the given irreducible representation. In dimension $3$, the dimensions of these spaces can be read off directly from the character tables of the group (see \cite{altherz,bishop}, for example); for dimensions $4$ and higher, one needs to compute these dimensions for each individual group. This can be done in a similar way as in the proof of Lemma~\ref{lem:rotation}, for example (see also \cite{BS2}).

Finally, note that due the simplicity of its entries and its straightforward construction, each of the orbit rigidity matrices of a given $d$-dimensional framework allows a quick analysis of its row or column dependencies, and hence provides a powerful tool  for the detection of infinitesimal motions and self-stresses which exhibit the symmetries of the corresponding irreducible representation and which cannot be found by checking the corresponding gain-sparsity counts. 

\subsection{Non-Abelian groups}

In Section~\ref{sec:block} we showed how to construct anti-symmetric orbit rigidity matrices for frameworks with any Abelian point group symmetry in an arbitrary dimension. The key problem to extend these constructions to frameworks with non-Abelian point group symmetries is that each non-Abelian point group has an irreducible representation which is of dimension at least $2$, and an infinitesimal motion which is symmetric with respect to such a higher-dimensional representation is not uniquely determined by the velocity vectors assigned to the vertices in the quotient gain-graph. Therefore, the entries of an orbit rigidity matrix corresponding to such a representation (as well as the underlying combinatorial structure for such an orbit matrix) are more complicated. It remains open how to extend our methods and results to frameworks with non-Abelian point group symmetries.

\subsection{Group actions which are not free on the vertex set}

Throughout this paper, we assumed that the group $\Gamma$ acts freely on the vertex set of the graph $G$. While in principle we do not expect any major new complications to arise if we allowed $\Gamma$ to act non-freely on the vertices of $G$, the structures of the orbit rigidity matrices and the corresponding gain-sparsity counts would need to be adjusted accordingly and would become significantly less clear and transparent (see also \cite{BSWW}).

For example, suppose a joint $p_i$ of a two-dimensional $\mathcal{C}_s$-symmetric framework $(G,\bp)$ is `fixed' by the reflection $s$ in $\mathcal{C}_s$, i.e., we have $\tau(s)(p_i)=p_i$. Then $p_i$ contributes only one column to the fully symmetric orbit rigidity matrix of $(G,\bp)$ (as $p_i$ has only a one-dimensional space of fully symmetric displacement vectors: the space of all vectors which lie along the mirror line of $s$) and only one column to the anti-symmetric orbit rigidity matrix of $(G,\bp)$ (as $p_i$ has also only  a one-dimensional space of anti-symmetric displacement vectors: the space of all vectors which lie perpendicular to the mirror line of $s$).  Similarly, if $p_i$ is a joint of a two-dimensional $\mathcal{C}_2$-symmetric framework $(G,\bp)$ which is `fixed' by the half-turn $C_2$, then $p_i$ would contribute no column to the fully symmetric orbit rigidity matrix of $(G,\bp)$ (as $p_i$ has no fully symmetric displacement vectors) and two columns to the anti-symmetric orbit rigidity matrix of $(G,\bp)$ (as $p_i$ has  a two-dimensional space of anti-symmetric displacement vectors).

Due to these modifications to the structures and entries of the orbit rigidity matrices, the constructions of these matrices and the proofs for the combinatorial characterizations of $\Gamma$-generic infinitesimally rigid frameworks in the plane will become significantly more messy.

\subsection{Extensions to body-bar and body-hinge frameworks}

One now standard extension of bar-joint frameworks are the body-bar frameworks \cite{W1,Tay84}. These form a special class of bar-joint frameworks, which have many important practical applications in fields such as engineering, robotics or biochemistry.  Note that while a combinatorial characterization of $3$- or higher-dimensional bar-joint frameworks has not yet been found, rigid generic body-bar frameworks (without symmetry) were characterized in all dimensions by Tay \cite{Tay84}.

 In \cite{schtan}, we extend our tools and methods to $d$-dimensional body-bar frameworks with Abelian point group symmetries by giving a description of symmetric body-bar frameworks in terms of the Grassmann-Cayley algebra. Moreover, we establish combinatorial characterizations of body-bar frameworks which are generic with respect to a point group of the form $\mathbb{Z}/2\mathbb{Z}\times \dots \times \mathbb{Z}/2\mathbb{Z}$ using Dowling matroids.

Finally, in \cite{schtan} we also extend our methods and results to body-hinge frameworks, i.e., to structures which consist of rigid bodies that are connected, in pairs, by revolute hinges along assigned lines. This is an important step  towards applying our results to the rigidity and flexibility analysis of certain physical structures like robotic linkages or biomolecules.

\providecommand{\bysame}{\leavevmode\hbox to3em{\hrulefill}\thinspace}
\providecommand{\MR}{\relax\ifhmode\unskip\space\fi MR }
\providecommand{\MRhref}[2]{%
  \href{http://www.ams.org/mathscinet-getitem?mr=#1}{#2}
}
\providecommand{\href}[2]{#2}


\begin{thebibliography}{10}

 \bibitem{altherz}
S. L. Altmann and P. Herzig, \emph{Point-Group Theory Tables}, Clarendon
Press, Oxford, 1994

\bibitem{asiroth}
L.~Asimov and B.~Roth, \emph{The {R}igidity of {G}raphs}, AMS \textbf{245}
  (1978), 279--289.

\bibitem{bishop}
D.M. Bishop, \emph{Group {T}heory and {C}hemistry}, Clarendon Press, Oxford,
  1973.

 

\bibitem{cfgsw}
R.~Connelly, P.W. Fowler, S.D. Guest, B.~Schulze, and W.~Whiteley, \emph{When
  is a symmetric pin-jointed framework isostatic?}, International Journal of
  Solids and Structures \textbf{46} (2009), 762--773.


\bibitem{FGsymmax}
P.W. Fowler and S.D. Guest, \emph{A symmetry extension of {M}axwell's rule for
  rigidity of frames}, International Journal of Solids and Structures
  \textbf{37} (2000), 1793--1804.






 \bibitem{jkt}
T. Jordan, V. Kaszanitzky and S. Tanigawa, \emph{Gain-sparsity and symmetry-forced rigidity in the plane}, The EGRES technical report, TR-2012-17.


\bibitem{KG2}
R.D. Kangwai and S.D. Guest, \emph{Symmetry-adapted equilibrium matrices}, International Journal of
  Solids and Structures \textbf{37} (2000), 1525--1548.


\bibitem{Lamanbib}
G. Laman, \emph{On graphs and rigidity of plane skeletal structures}, J. Engrg.
Math. \textbf{4} (1970), 331-340





\bibitem{MT3} J.~Malestein and L.~Theran, \emph{Generic rigidity of frameworks with orientation-preserving crystallographic symmetry},
preprint, arXiv:1108.2518, 2011


 \bibitem{MT2}
\bysame, \emph{Generic rigidity of reflection frameworks}, preprint, arXiv:1203.2276, 2012.

\bibitem{MT1}
\bysame, \emph{Generic rigidity with forced symmetry and sparse colored graphs}, preprint, arXiv:1203.0772, 2012.




\bibitem{owen}
J.C. Owen and S.C. Power, \emph{Frameworks, symmetry and rigidity}, Int. J. Comput. Geom. Appl. \textbf{20} (2010), 723--750.

 \bibitem {ER} E. Ross, \emph{Geometric and combinatorial rigidity of periodic frameworks as graphs on the torus}, Ph. D. thesis, York University, Toronto, May 2011.


\bibitem{BS2}
B.~Schulze, \emph{Block-diagonalized rigidity matrices of symmetric frameworks and
  applications}, Contributions to Algebra and Geometry \textbf{51} (2010), No. 2, 427--466.

\bibitem{BS1}
\bysame, \emph{Injective and non-injective realizations with symmetry},
  Contributions to Discrete Mathematics \textbf{5} (2010), 59--89.

\bibitem{BS3}
\bysame, \emph{Symmetric versions of {L}aman's {T}heorem}, Discrete and Computational Geometry \textbf{44} (2010), No. 4, 946--972.

\bibitem{BS4} \bysame, \emph{Symmetric Laman theorems for the groups $C_2$ and $C_s$},
The Electronic Journal of Combinatorics \textbf{17} (2010), No. 1, R154, 1--61.

\bibitem{schtan} B.~Schulze and S.~Tanigawa, \emph{Infinitesimal rigidity of symmetric body-bar and body-hinge frameworks}, preprint
(see Sections 7 and 8 in arXiv:1308.6380), 2013

\bibitem{BSWW} B.~Schulze and W.~Whiteley, \emph{The orbit rigidity matrix of a symmetric framework},
Discrete and Computational Geometry \textbf{46} (2011), No. 3, 561--598.


\bibitem{tan}
S. Tanigawa, \emph{Matroids of gain graphs in applied discrete geometry}, arXiv:1207.3601, 2012.


\bibitem{Tay84} T.-S. Tay, \emph{Rigidity of multi-graphs, linking rigid bodies in $n$-space}, J. Comb. Theory, B \textbf{36} (1984), 95--112.


 
 \bibitem{LT}
L.~Theran, \emph{Henneberg constructions and covers of cone-Laman graphs}, preprint, arXiv:1204.0503,
  2012.


\bibitem{W1}
W. Whiteley, \emph{Some {M}atroids from {D}iscrete {A}pplied {G}eometry},
  Contemporary Mathematics, AMS \textbf{197} (1996), 171--311.


\bibitem{zaslavsky1991biased}
\bysame,
\emph{Biased graphs "{II}": The three matroids},
J.~Combin.~Theory Ser.~B, \textbf{51} (1991), 46--72.


\end{thebibliography}
\end{document}